\newcommand{\adm}[1]{{\left\vert\kern-0.25ex\left\vert\kern-0.25ex\left\vert #1 
		\right\vert\kern-0.25ex\right\vert\kern-0.25ex\right\vert}}
\newtheorem{theorem}{Theorem}[section]
\newtheorem{lemma}[theorem]{Lemma}
\newtheorem{corollary}[theorem]{Corollary}
\newtheorem{proposition}[theorem]{Proposition}
\newtheorem{definition}[theorem]{Definition}
\theoremstyle{definition}
\newtheorem{example}[theorem]{Example}
\newtheorem{remark}[theorem]{Remark}
\theoremstyle{definition}
\def\bR{\mathbb{R}}
\def\bC{\mathbb{C}}
\def\bN{\mathbb{N}}
\def\smo{\setminus\{0\}}
\def\cF{\mathcal{F}}
\def\cS{\mathcal{S}}
\def\cG{\mathcal{G}}
\def\Fm{\mathcal{M}}
\def\cH{\mathcal{H}}
\def\Ep{\mathcal{E}}
\def\cC{\mathcal{C}}
\def\rd{\bR^d}
\def\rn{\bR^n}
\def\rdd{\bR^{2d}}
\def\lan{\langle}
\def\ran{\rangle}
\def\S0{S^0_{0,0}}
\def\Bd'{B_{\delta'}}
\def\cBd'{\bar{B}_{\delta'}}
\def\vp{\varphi}
\def\veps{\varepsilon}
\def\wh{\widehat}
\def\ird{\int_{\rd}}
\def\irn{\int_{\rn}}
\def\vird{\widetilde{\int_{\rd}}}
\def\irdd{\int_{\rdd}}
\def\smo{\setminus\{0\}}
\def\hbi{\frac{i}{\hbar}}
\def\hbid{\frac{i}{2\hbar}}
\def\twph{(2\pi\hbar)}
\def\twpih{(2\pi i \hbar)}
\def\Frd{\mathrm{Fr}(\rd)}
\def\Sjo{M^{\infty,1}}
\def\Sjog{M^{\infty,1}_{(g)}}
\def\Sjord{M^{\infty,1}(\rd)}
\def\Sjorn{\Sjo(\rn)}
\def\vpe{\vp_{\veps}}
\def\Fp{F_+}
\def\Fm{F_-}
\def\cM{\mathcal{M}}
\def\cV{\mathcal{V}}
\def\gau{\tilde{g}}
\def\g{\gamma}
\def\fSjo{W^{\infty,1}}
\def\fSjord{W^{\infty,1}(\rd)}
\def\Miun{{M_{(g_n)}^{\infty,1}(\bR^n)}}
\def\Mium{{M_{(g_m)}^{\infty,1}(\bR^m)}}
\def\Miuinf{{M_{(g)}^{\infty,1}(\bR^\infty)}}
\newcommand\numberthis{\addtocounter{equation}{1}\tag{\theequation}}
\def\Lmin{L_{\mathrm{min}}}
\def\bk{{\bf k}}
\def\Cinftyb{C^\infty_{\mathrm{b}}}
\def\Ckb{C^k_{\mathrm{b}}}
\def\Xint#1{\mathchoice
{\XXint\displaystyle\textstyle{#1}}%
{\XXint\textstyle\scriptstyle{#1}}%
{\XXint\scriptstyle\scriptscriptstyle{#1}}%
{\XXint\scriptscriptstyle\scriptscriptstyle{#1}}%
\!\int}
\def\XXint#1#2#3{{\setbox0=\hbox{$#1{#2#3}{\int}$ }
\vcenter{\hbox{$#2#3$ }}\kern-.6\wd0}}
\def\dashint{\Xint-}
\begin{document}
	
	\title[Phase space analysis of Fresnel integrals]{Phase space analysis of finite and infinite dimensional Fresnel integrals}

	\author[S. Mazzucchi]{Sonia Mazzucchi}
	\address{Dipartimento di Matematica, Università di Trento, Via Sommarive 14, 38123 Povo, Italy}
	\email{sonia.mazzucchi@unitn.it}

	\author[F. Nicola]{Fabio Nicola}
	\address{Dipartimento di Scienze Matematiche ``G. L. Lagrange'', Politecnico di Torino, Corso Duca degli Abruzzi 24, 10129 Torino, Italy}
	\email{fabio.nicola@polito.it}

	\author[S.I. Trapasso]{S. Ivan Trapasso}
	\address{Dipartimento di Scienze Matematiche ``G. L. Lagrange'', Politecnico di Torino, Corso Duca degli Abruzzi 24, 10129 Torino, Italy}
	\email{salvatore.trapasso@polito.it}
	
	\subjclass[2020]{46T12, 46M10, 28C05, 42B35, 42B20, 47D08, 35J10}
	\keywords{Fresnel integrals, infinite dimensional integration, projective systems of linear functionals, Gabor transform, modulation spaces}
	
	\begin{abstract}
		The full characterization of the class of Fresnel integrable functions is an open problem in functional analysis, with significant applications to mathematical physics (Feynman path integrals) and the analysis of the Schr\"odinger equation. In finite dimension, we prove the Fresnel integrability of functions in the Sj\"ostrand class $M^{\infty,1}$ --- a family of continuous and bounded functions, locally enjoying the mild regularity of the Fourier transform of an integrable function. This result broadly extends the current knowledge on the Fresnel integrability of Fourier transforms of finite complex measures, and relies upon ideas and techniques of Gabor wave packet analysis. We also discuss infinite-dimensional extensions of this result. In this connection, we extend and make more concrete the general framework of projective functional extensions introduced by Albeverio and Mazzucchi. In particular, we obtain a concrete example of a continuous linear functional on an infinite-dimensional space beyond the class of Fresnel integrable functions. As an interesting byproduct, we obtain a sharp $M^{\infty,1} \to L^\infty$ operator norm bound for the free Schr\"odinger evolution operator.
  \end{abstract}
	\maketitle
	
\section{Introduction}
\subsection{The problem of Fresnel integrability} \label{sec fresnel intro}
The theory of oscillatory integrals is a classical research topic in harmonic analysis \cite{stein}, with countless connections to different branches of pure and applied mathematics. Originally introduced by Stokes and Kelvin in the context of wave diffraction, oscillatory integrals were extensively studied by H\"ormander in relation to the microlocal analysis of Fourier integral operators \cite{Hor71,HorBook}, resulting in a rich and elegant framework with significant applications to regularity theory for partial differential equations and the propagation of singularities \cite{Dui}.

\textit{Fresnel integrals} are oscillatory integrals (of the first kind) with quadratic phase, namely of the form 
\begin{equation}\label{FId-int}
     \int_{\rd} e^{\hbid |x|^2}f(x) dx,
\end{equation}
where $f \colon \bR\to \bC$ is a Borel measurable function and $\hbar$ a positive parameter, playing here the role of inverse frequency. According to H\"ormander's approach, such an integral must be computed by means of a suitable regularization procedure. To be precise, $f$ is said to be a \textit{Fresnel integrable function} if, for all $\vp \in \cS(\rd)$ with $\vp(0)=1$, the limit 
\begin{equation}  \vird e^{\hbid |x|^2}f(x) dx \coloneqq \lim_{\varepsilon \downarrow 0} \, \twpih^{-d/2} \ird e^{\hbid |x|^2}f(x)\vp(\varepsilon x) dx
\end{equation} exists and does not depend on $\vp$. We shall denote by $\Frd$ the space of Fresnel integrable functions on $\rd$.

As a consequence, while $\Frd$ can be proved to be much larger than $L^1(\bR^d)$ and to include several elements that are important in applications, its full characterization is still an open problem. Partial results in this direction include the Fresnel integrability of the so-called \textit{H\"ormander's symbols}, namely $C^\infty$ functions whose derivatives have a suitably controlled growth at infinity \cite{BBR, HorBook}. More recent results \cite{AlBr,ELT,mazz_book} rely on techniques of Fourier analysis and provide the Fresnel integrability of all the functions belonging to the Banach algebra $\cF\cM(\rd)$ of Fourier transforms of finite (i.e., with bounded total variation) complex Borel measures on $\rd$. A Parseval-type representation formula for the Fresnel integral plays a central role in this context: if $f(x) = \int_{\rd} e^{i x \cdot y} d\mu(y)\in \cF\cM(\rd)$ is the Fourier transform of the measure $\mu$, then one has 
\begin{equation}\label{eq-parsev-intro}
        \vird e^{\hbid |x|^2}f(x) dx=\int_{\rd }e^{-\frac{i\hbar}{2}|x|^2}d\mu (x).
\end{equation} More details can be found in Section \ref{sez-fresnel-findim} below. 

This result is particularly significant since it paves the way to the extension of the theory of oscillatory integrals to the case where the underlying integration domain $\rd$ is replaced by an infinite-dimensional space. Besides the relevance of this problem in the framework of abstract integration theory, infinite-dimensional Fresnel integrals are a key ingredient of the mathematical theory of \textit{Feynman path integrals}. Introduced in the 1940s \cite{feyn1,feyn2}, Feynman path integrals are heuristic representations of the solutions of the Schr\"odinger equation in terms of oscillatory integrals over an infinite-dimensional space of paths. The rigorous analysis of path integrals is a longstanding challenge for the mathematical community, due to several possible approaches and the diverse toolset of techniques needed to encompass even the most basic physical models \cite{AlHKMa,fuji_book,mazz_book}. 

In fact, from a mathematical point of view, it would be interesting to give a meaning to expressions like \eqref{FId-int} where $\bR^d$ is replaced by an infinite dimensional Hilbert space. This is not an easy task, as witnessed by the failure \cite{Cam,Tho} of traditional techniques of infinite dimensional integration based on the Kolmogorov existence theorem --- which allows for the construction of probability measures on infinite dimensional spaces as \textit{projective limits of probability measures} starting from their values on finite dimensional subspaces \cite{Boc,Bau}. Indeed, as detailed in \cite{AlMa2016,Tho}, these techniques cannot be readily extended to the case where probability measures are replaced by complex measures in order to obtain one of the latter whose finite dimensional approximations are Gaussian measures with complex covariance of the form $(2\pi z)^{d/2}e^{-\frac{|x|^2}{2z}}dx$, with $z\in \bC$, $\mathrm{Re}(z)\geq 0$ and $\mathrm{Im}(z)\neq 0$, since the resulting complex measure would have infinite total variation. 

The no-go results just discussed imply that the construction of infinite dimensional oscillatory integrals calls for alternatives to the standard Lebesgue theory, in order to  circumvent the lack of an underlying ``flat'' measure similar to the Lebesgue measure in finite dimensional spaces. A more flexible approach, whose roots are to be found in Daniell's integration and the Riesz-Markov-Kakutani representation theorem, provides that integration should be developed in terms of linear continuous functionals defined on a suitable class of functions. The theory of infinite dimensional Fresnel integrals by Albeverio and H{\o}egh-Krohn precisely embraces this point of view \cite{AlHKMa}. In their work, Fresnel integrals on a (real, separable) infinite dimensional Hilbert space $(\cH, (\cdot,\cdot) )$ are designed using a linear continuous functional $\ell$ on the Banach algebra $\cF\cM(\cH)$ of Fourier transforms of complex Borel measures on $\cH$. More precisely, the definition is given again in terms of a generalized Parseval-type equality: if $f(x)=\int_{\cH} e^{i (x,y)} d\mu(y) \in \cF\cM(\cH)$, then we set (following \cite{AlHKMa})
\begin{equation}
    \ell(f) = \widetilde{ \int_\cH}e^{\hbid \|x\|^2}f(x) dx \coloneqq \int_\cH e^{-\frac{i\hbar}{2}\|x\|^2}d\mu(x). 
\end{equation}
The integration theory developed in this way (see also \cite{AlBr,ELT} for related extensions) is quite robust and still encompasses most of the properties that hold in the finite dimensional context, yet leaving open the problem of the full characterization of the largest class of Fresnel integrable functions $\mathrm{Fr}(\cH)$. Several attempts have been made to prove Fresnel integrability results beyond the $\cF\cM(\cH)$ scenario, both in finite and infinite dimensions \cite{AlMa2005}, but a general theory is currently unavailable and only few examples have been proposed so far --- which are ultimately related to particular physical models and special representations formulae based on an analytic continuation technique \cite{AlMa2,AlCaMa}.

\subsection{Fresnel integrability for the Sj\"ostrand class}
In this note we address the problem of Fresnel integrability, in both finite and infinite dimensional settings, of a broader class of functions, which in the finite dimensional case is strictly larger than $\cF\cM(\rd)$. More precisely, in $\rd$ we consider the \textit{Sj\"ostrand class} $\Sjord$ of temperate distributions $f \in \cS'(\rd)$ satisfying the following condition: given a Schwartz function $g \in \cS(\rd)\smo$, called \textit{window}, we have that $f \in \Sjord$ if and only if 
\begin{equation}\label{eq-sjo-intro}
    \|f\|_{M^{\infty,1}_{(g)}} \coloneqq \ird \sup_{x\in \rd} |\cF (f \overline{g(\cdot - x)}) (\xi)| < \infty, 
\end{equation}
where $\cF$ stands for the Fourier transform and $\overline{v}$ for the complex conjugate of $v \in \bC$. The expression above defines the $M^{\infty,1}(\bR^d)$-norm of $f$ and turns $\Sjord$ into a Banach space. It is then not difficult to realize (see Section \ref{sec-sjo} for details) that any $f\in M^{\infty,1}(\bR^d)$ is a continuous, bounded function whose Fourier transform roughly behaves like a $L^1$ function --- in particular, $f$ locally enjoys the same weak regularity of the Fourier transform of an integrable function. 

A condition similar to \eqref{eq-sjo-intro} appeared in Sj\"ostrand's paper \cite{sjo} to design an unconventional class of symbols whose quantization is stable to composition and still leads to bounded pseudodifferential operators on $L^2(\rd)$. In fact, conditions in the same spirit already appeared in the pioneering works by Feichtinger on \textit{modulation spaces} \cite{fei_mod83}, since the regularity of $f$ is characterized by means of a mixed $L^\infty-L^1$ constraint on a \textit{phase space representation} of $f$ obtained by the \textit{windowed} Fourier transform $V_gf(x,\xi) = \cF(f \overline{g(\cdot-x)})(\xi)$, relative to $g$, also known as the \textit{Gabor transform} or \textit{short-time Fourier transform} in the time-frequency analysis literature \cite{gro_book}. The choice of this special function space is not fortuitous, since it already played a significant role in some recent advances on mathematical path integrals powered by techniques and ideas of Gabor analysis \cite{FNT, NT_cmp,NT_jmp,T_imrn} --- see also \cite{NT_book} for a broad overview of this approach. 

It was already proved in \cite{NT_cmp} that $\cF \cM(\rd) \subsetneq \Sjord$, and in the same reference it is also recalled that $\Cinftyb(\rd) \subset \Sjord$, where $\Cinftyb(\rd)$ is the space of smooth, bounded functions with bounded derivatives of any order --- see Example \ref{example-embed} below for explicit instances of a function in $\Cinftyb(\rd) \setminus \cF\cM(\rd)$. Our first main result is Theorem \ref{thm:fr-int-sjo}, where we prove that $\Sjord \subset \Frd$ and also provide a phase space generalization of the Parseval-like formula in \eqref{eq-parsev-intro} --- roughly speaking, the Fourier transform is now replaced by the Gabor transform. Such a representation formula is flexible enough to allow for the extension of the notion of Fresnel integrability to \textit{distributions} in $\cF\Sjord$, a topic that is discussed in Section \ref{sec-int-fousjo}.   

These results rely on a careful analysis of the time-frequency features of the Fresnel function $e^{\hbid |x|^2}$ (see Section \ref{sez-fresnel-findim}), which may be of independent interest --- we plan to investigate the analogous properties of higher-order Fresnel functions like $e^{\hbid |x|^k}$ with $k>2$ in a forthcoming paper. 

\subsection{Infinite dimensional Fresnel integrals at the Sj\"ostrand regularity}

In the second part of the paper we discuss some results concerning the Fresnel integrability of functions in the Sj\"ostrand class in the infinite dimensional setting where $\rd$ is replaced by $\bR^\infty = \bR^{\bN}$. To this aim, we resort to the general theory of \textit{projective systems of functionals and their limits} \cite{AlMa2016}, that generalizes the Daniell-Kolmogorov theory of projective limits of probability measures \cite{Bau} and unifies oscillatory and probabilistic integration in infinite dimensional spaces. All the technical preliminaries are reviewed in Section \ref{sez-psf} and the application to Fresnel integrals is discussed at length in Section \ref{sez-inf-dim-ext} for the benefit of the reader. 

With the aid of a suitable sequence of Gaussian window functions $(g_n)_n$, we extend the definition of the $M^{\infty,1}(\bR^d)$-norm in \eqref{eq-sjo-intro} to an infinite dimensional setting by constructing a $M^{\infty,1}(\bR^\infty)$-norm defined on a subset $\cC_0$ of \textit{cylinder functions} $f \colon \bR^\infty\to \bC$, i.e., those maps which depend explicitly only on a finite number of variables. Furthermore, we show that it is possible to unambiguously define a bounded linear continuous functional $L_{\text{min}} \colon \cC_0\to \bC$ in terms of the finite dimensional Fresnel integrals on $M^{\infty,1}(\bR^n)$ studied in Section \ref{sez-fresnel-findim}. The backbone of this result is a detailed analysis of the family of functionals $L_n \colon \Sjo(\rn) \to \bC$ with respect to the dimension parameter $n \in \bN$, where 
\begin{equation}
    L_n(f) = \widetilde{\int_{\bR^n}}e^{\hbid\|x\|^2}f(x)dx. 
\end{equation} 
In Theorem \ref{th:norm:Ln} we were able to compute the exact norm of these operators, hence obtaining sufficient conditions such that they are uniformly bounded with respect to $n$. 

As an interesting byproduct, in  Section \ref{sec-schro} we discuss how this result can be used to compute the {\it exact norm} of the free Schr\"odinger propagator as an operator from $M^{\infty,1}(\mathbb{R}^n)$ (with Gaussian window) to $L^\infty(\mathbb{R}^n)$. It is indeed well known that the free particle propagator is not bounded $L^\infty(\mathbb{R}^n)\to L^\infty(\mathbb{R}^n)$, hence leading one to replace $L^\infty(\mathbb{R}^n)$ with a smaller space ($M^{\infty,1}(\mathbb{R}^n)$, in this case), according to a common practice in harmonic analysis, where several classical $L^p\to L^p$ inequalities fail to hold in the endpoint cases 
 $p=1,\infty$ and one thus replaces $L^1(\mathbb{R}^n)$ with the Hardy space $H^1$ (or weak-$L^1$) and $L^\infty$ with the space $\text{BMO}$ of functions of bounded mean oscillation --- see \cite{stein} for additional details. 

The aforementioned results are further developed in Section \ref{sez-proj-ext-top}, where we introduce an extension of $L_{\text{min}} \colon \cC_0\to \bC$ in terms of a linear continuous functional $L \colon \bar\cC_0\to \bC$ defined on the closure $\bar \cC_0$ of the set $\cC_0$ of cylinder functions in the $M^{\infty,1}(\bR^\infty)$-norm. A detailed analysis of the elements in the domain $\bar\cC_0$ of $L$ is provided, along with some examples.

Finally, in Section \ref{sez-proj-est-seq} we present a sequential approach to design an alternative linear functional $L' \colon D(L')\to \bC$ that eventually results in a larger extension of $L_{\text{min}}$. In particular, we prove that the domain $\bar \cC_0$ of $L$ and the Albeverio--H{\o}egh-Krohn class $\cF\cM(\ell^2) $ of Fourier transforms of complex Borel measures on $ \ell^2$ are both strictly included in the domain $D(L')$ of $L'$,  the latter thus providing a non-trivial extension of the class of Fresnel integrable function in infinite dimensions. A detailed description of some examples is given in order to concretely substantiate this result.

\subsection*{Summary of the main results}
We provide here, for the convenience of the reader, a short list of the most relevant results contained in this note.  

\begin{enumerate}
    \item Theorem \ref{thm:fr-int-sjo}, concerning the Fresnel integrability of functions in the finite-dimen\-sional Sj\"ostrand class $\Sjord$, along with its applications (Example \ref{example-embed}) and generalizations (Theorem \ref{thm:fr-int-fsjo}). 
    \item Theorem \ref{th:norm:Ln}, on the exact norm of linear continuous Fresnel-type functionals on the Sj\"ostrand space $\Sjo(\rn)$.
    \item Theorem \ref{thm-schro}, where we obtain sharp $L^\infty$ bounds for the solution of the free Schr\"odinger equation with initial datum in $\Sjorn$.  
    \item Example \ref{examplef-nCauchy} focuses on the characterization of the domain of a non-trivial extension $(L,D(L))$ of $(L_{\min}, D(L_{\min}))$. To this aim, the trace-type result in Proposition \ref{Prop-comparison-norm} (with sharp bound \eqref{eq formula uno}) and Theorem \ref{thm-indep-rep} play a key role. 
    \item In Examples \ref{example-1} and \ref{example-2} we discuss the difficulties related to integration of non-cylinder functions in the Fresnel algebra $\cF(\ell^2(\bN))$.
    \item The rest of the paper is devoted to extending the integration domains beyond $\cF(\ell^2(\bN))$ and $\cF \cM(\ell^2(\bN))$, which is obtained by constructing a different linear continuous functional $(L',D(L'))$ that extends $(L,D(L))$ --- cf.\ Theorem \ref{thm-L'-ext}. A concrete function $f \colon \bR^\infty \to \bC$ in $D(L')$ is exhibited, along with useful representation formulae (Lemma \ref{lem:rep_infou}).     
\end{enumerate}

To the best of our knowledge, the results discussed so far substantiate the general abstract framework developed in \cite{AlMa2016} with novel concrete examples, unraveling new classes of Fresnel integrable functions and allowing for the first steps of Gabor analysis beyond the finite dimensional Euclidean setting.

\section{Preliminary materials}

\subsection{Notation}\label{sec notation}
In this note we choose the following convention for the inner product on $L^2(\rd)$:
\begin{equation}    \lan f,g \ran = \ird f(y) \overline{g(y)}dy, \qquad f,g \in L^2(\rd).
\end{equation}
The duality pairing between a temperate distribution $f \in \cS'(\rd)$ and a function $g \in \cS(\rd)$ in the Schwartz class is still denoted by $\lan f,g \ran$, upon agreeing that $\lan f,g \ran = f(\overline{g})$ for the sake of consistency. 

Given the parameter $\hbar \in (0,1]$, the Fourier transform of $f \in L^2(\rd)$ is defined here by setting
\[  \hat{f}(\xi)=\cF(f)(\xi) = \twph^{-d/2} \ird e^{-\hbi \xi\cdot x}f(x)dx, \quad \cF^{-1}(f)(x) = \twph^{-d/2} \ird e^{\hbi x\cdot \xi}f(\xi)d\xi, \]
with obvious extensions to temperate distributions. The Parseval identity thus reads $\lan f,g \ran = \lan \hat{f},\hat{g} \ran$.  

Given $\lambda \in \bR\smo$ and $f \colon \rd \to \bC$, we set $f\circ \lambda (x) \coloneqq f(\lambda x)$. 

Given $A,B \in \bR$, we occasionally write $A \lesssim B$ to mean that the inequality $A \le C B$ holds, where $C>0$ is a constant that does not depend on $A,B$ --- but may depend on the parameter $\lambda$, in which case we write $A \lesssim_\lambda B$. 

The inhomogeneous magnitude of $y \in \rd$ is defined as follows: $\lan y \ran \coloneqq (1+|y|^2)^{1/2}$.

The symbol $\ast$ denotes convolution, as usual.

\subsection{Basic toolkit of Gabor analysis} \label{sec gabor} Let us briefly review some results of time-frequency analysis that will be used below. The reader is addressed to \cite{gro_book,NT_book} for proofs and further details. 

Given $x \in \rd$ and $\xi \in \wh\rd \simeq \rd$, the translation and modulation operators $T_x$ and $M_x$, acting on $f \colon \rd \to \bC$, are defined by
\begin{equation}
    T_x f(y) \coloneqq f(y-x), \qquad M_\xi f(y) \coloneqq \twph^{-d/2} e^{\hbi \xi \cdot y}f(y), \qquad y \in \rd.
\end{equation}
The phase space shift of $f$ along $(x,\xi) \in \rd \times \wh{\rd} \simeq \rdd$ is thus defined by $\pi(x,\xi) \coloneqq M_\xi T_x$. 

The Gabor/short-time Fourier transform (STFT) of $f \in \cS'(\rd)$ with respect to the window $g \in \cS(\rd)\smo$ is defined by
\begin{equation}
    V_g f(x,\xi) \coloneqq \lan f,\pi(x,\xi) g \ran = \cF (f \cdot \overline{T_x g})(\xi), \qquad (x,\xi) \in \rdd.
\end{equation}
We also set, for later convenience, 
\begin{equation} 
    \cV_g f(x,\xi) \coloneqq \overline{V_g \overline{f}}(x,\xi) = \twph^{-d/2} \ird e^{\hbi \xi \cdot y} f(y) g(y-x) dy = V_{\bar{g}}f(x,-\xi). 
\end{equation}
It is not difficult to show that $V_g f$ is a continuous function in $\rdd$ with at most polynomial growth (that is $|V_g(z)| \le C (1+|z|)^N$ for some $C>0$ and $N \in \bN$, with $z \in \rdd$). We recall for later use a couple of basic properties of the STFT:
\begin{itemize}
    \item The Fourier transform relates with a right-angle rotation in phase space, according to the pointwise identity
    \begin{equation}\label{eq:fund-id-stft}
        V_g f(x,\xi) = e^{-\hbi x\cdot \xi} V_{\hat g} \hat f (\xi,-x).
    \end{equation}
    \item The following inversion formula for the Gabor transform: if $f \in \cS'(\rd)$ and $g,\gamma \in \cS(\rd)$ are such that $\lan \gamma,g \ran \ne 0$, we have
\begin{equation}\label{eq:invers-stft}
    f = \frac{1}{\lan \gamma,g \ran} \irdd V_g f (z)\pi(z)\gamma dz, 
\end{equation} where the identity holds in the sense of distributions. 
\end{itemize}

It is intuitively clear that the regularity of a function/distribution can be measured in terms of the decay/summability of its phase space representations. This principle motivates the introduction of the so-called \textit{modulation spaces}. To be precise, the modulation space $M^{p,q}(\rd)$ with $1 \le p,q \le \infty$ contains all the distributions $f \in \cS'(\rd)$ such that, for a given $g \in \cS(\rd)\smo$, the mixed Lebesgue norm
\begin{equation}\label{eq:modsp-norm}
    \| f \|_{M^{p,q}_{(g)}} \coloneqq \| V_g f (x,\xi) \|_{L^q_\xi (L^p_x)} = \Big( \ird \Big( \ird |V_g f(x,\xi)|^p dx \Big)^{q/p} d\xi \Big)^{1/q}
\end{equation} is finite --- with obvious modifications if $p,q=\infty$. As a rule of thumb, $f$ belongs to $M^{p,q}$ if it roughly behaves as an $L^p$ function, while its Fourier transform $\hat{f}$ loosely enjoys the regularity of a $L^q$ function. 

It turns out that $(M^{p,q}(\rd), \|\cdot\|_{M^{p,q}_{(g)}})$ is a Banach space for every $p,q$, and the norms $\|\cdot \|_{M^{p,q}_{(g)}}$ and $\|\cdot \|_{M^{p,q}_{(\gamma)}}$ are equivalent for every choice of a different window $\gamma \in \cS(\rd)\smo$. We also have $\cS(\rd)\subset M^{p,q}(\rd)$, with dense embedding if $p,q < \infty$. Remarkably, $M^{2,2}(\rd)$ coincides with $L^2(\rd)$. Moreover, modulation spaces are increasing with the indices, meaning that $M^{p_1,q_1}(\rd) \subseteq M^{p_2,q_2}(\rd)$ as long as $p_1\le p_2$ and $q_1\le q_2$. 

As far as duality is concerned, the form $\lan f,g \ran$ in $\cS \times \cS$ extends to modulation spaces as follows: if $f \in M^{p,q}(\rd)$ and $h \in M^{p',q'}(\rd)$, where $p',q'$ are the H\"older conjugate indices of $p,q$ respectively (hence satisfy $1/p+1/p'=1/q+1/q'=1$), then for all $g,\gamma \in \cS(\rd)$ such that $\lan g,\g \ran \ne 0$ we have
\begin{equation} \label{eq dual star}
    \lan h,f \ran_* \coloneqq \frac{1}{\lan g,\g \ran} \irdd V_\g h(x,\xi) \overline{V_g f(x,\xi)} dxd\xi.
\end{equation}
Note in particular that $\lan g,\g \ran \lan h, \overline{f} \ran_* = \irdd V_\g h(x,\xi) \cV_g f(x,\xi) dx d\xi.$

Swapping the integration order in \eqref{eq:modsp-norm} is linked to the Fourier transform by means of the identity \eqref{eq:fund-id-stft}. To be precise, setting
\begin{equation}\label{eq:wasp-norm}
    \| f \|_{W^{p,q}_{(g)}} \coloneqq \| V_g f (x,\xi) \|_{L^q_x (L^q_\xi)} = \Big( \ird \Big( \ird |V_g f(x,\xi)|^p d\xi \Big)^{q/p} dx \Big)^{1/q}, 
\end{equation}
it is easy to see that $\|f\|_{M^{p,q}_{(g)}} = \| \hat{f}\|_{W^{p,q}_{(\hat g)}}$. In general, the Wiener amalgam space $W^{p,q}(\rd) = \cF M^{p,q}(\rd)$ contains all the distributions $f \in \cS'(\rd)$ such that $\|f\|_{W^{p,q}_{(g)}}<\infty$ for some (hence any) non-trivial $g \in \cS(\rd)$. 

\subsection{The Sj\"ostrand class}\label{sec-sjo} A distinguished role among modulation spaces is played by $\Sjord$, also known as the Sj\"ostrand class --- after \cite{sjo}, where this space appeared in disguise as a non-standard symbol class for pseudodifferential operators, which actually enjoy a number of nice properties (such as boundedness on $L^2$ and stability under composition and inversion, later revisited from a time-frequency angle in \cite{gro_sjo}). 

Recall that $f \in \Sjord$ if, for some $g \in \cS(\rd)\smo$,
\begin{equation}
    \|f\|_{M^{\infty,1}_{(g)}} = \ird \sup_{x \in \rd} |V_g f(x,\xi)| d\xi < \infty. 
\end{equation}
It is a straightforward consequence of the definition that $f$ is a bounded and continuous function on $\rd$, locally enjoying the regularity of a $\cF L^1$ function. More precisely, since the right-hand side of the inversion formula \eqref{eq:invers-stft} is a well-defined Lebesgue integral, we have a pointwise identity implying boundedness:
\begin{equation}
    \| f \|_{L^\infty} \le \frac{\twph^{-d/2}}{|\lan \gamma,g\ran|} \|\gamma\|_{L^1} \|f\|_{\Sjog}.
\end{equation}
Resorting again to \eqref{eq:invers-stft} shows that $f$ is uniformly continuous on $\rd$, in view of the bound
\begin{equation}
    |f(y+u)-f(y)| \le \frac{\twph^{-d/2}}{|\lan \gamma,g\ran |} \irdd |V_g f(x,\xi)| |e^{-\hbi \xi \cdot u} \overline{\gamma(y+u-x)}-\overline{\gamma(y-x)}| dxd\xi, 
\end{equation} hence vanishing as $u \to 0$ by dominated convergence. 

A well known feature of $\Sjord$ is that it is a Banach algebra for pointwise multiplication \cite{reich-s}. We also recall that two relevant subsets of the Sj\"ostrand class are given by the space $\Cinftyb(\rd)$ consisting of infinitely differentiable bounded functions with bounded derivatives of any order (in fact, $\Ckb(\rd)\subset \Sjord$ holds for $k>d$, see \cite[Theorem 14.5.4]{gro_book}), and the space $\cF \cM (\rd)$ of Fourier transforms of finite complex Borel measures on $\bR^d$ (see \cite[Proposition 3.4]{NT_cmp}). 

We remark for later use that if $(f_n)_n$ is a sequence in $\Sjo$ such that $\|f_n -f\|_{\Sjo} \to 0$ as $n \to \infty$, then $f_n \to f$ uniformly on compact subsets of $\rd$. To prove this fact, note that if $K \subset \rd$ is an arbitrary compact subset and $g_K\in \cS(\rd)$ is a bump function such that $g \equiv 1$ on $K$, then
\begin{equation}
\|(f_n-f)g_K\|_{L^\infty} \le \|\cF ((f_n-f)g_K))(\cdot) \|_{L^1} = \|V_{g_K}(f_n-f)(0,\cdot)\|_{L^1}.
\end{equation}
Nevertheless, the norm topology associated with $\|\cdot \|_{\Sjog}$ can be quite restrictive as far as limiting arguments are concerned. For this reason, we introduce the convenient notion of \textit{narrow convergence}: we say that $f_n \to f$ narrowly if $f_n \to f$ weakly in $\cS'(\rd)$ (hence $V_g f_n \to V_gf$ pointwise on $\rdd$), and there exists a controlling function $h \in L^1(\rd)$ such that, for some (hence any) window $g$,
\begin{equation}
    \sup_{x \in \rd} |V_g f_n (x,\xi)| \le h(\xi), \qquad \text{(Lebesgue) a.e. } \xi \in \rd, 
\end{equation} uniformly with respect to $n$. 

It is proved in \cite{sjo} that $\cS(\rd)$ is densely embedded in $\Sjord$ with respect to the narrow convergence, and also that if $f_n \to f$ narrowly then $f_n(y) \to f(y)$ for every $y \in \rd$ --- this follows easily by combining the inversion formula \eqref{eq:invers-stft} with dominated convergence, since
\begin{equation}
    |f_n(y)-f(y)| \le \frac{\twph^{-d/2}}{|\lan \gamma,g\ran |} \|\gamma\|_{L^1} \ird \sup_{x \in \rd}|V_g (f_n-f)(x,\xi)| d\xi, \qquad y \in \rd.
\end{equation}

\subsection{Infinite dimensional integration and projective systems of functionals}\label{sez-psf}
This section is devoted to a systematic description of a general integration theory on infinite dimensional spaces based on the notion of linear functional. 

Recall that the Riesz-Markov theorem states a one-to-one correspondence between complex Borel measures and linear continuous functionals on the space of continuous functions vanishing at infinity. However, the latter result relies upon the local compactness of the underlying topological space where the Borel measure is defined, a condition that generally is not fulfilled in an infinite dimensional setting. Moreover, Kolmogorov existence theorem (which is one of the cornerstones of modern probability theory, and provides the construction of probability measures on suitable infinite dimensional spaces \cite{Bau} out of their finite dimensional approximations) does not extend trivially to the case of complex or signed measures \cite{Tho}. Indeed, the necessary requirement of finite total variation of the resulting measure can be hardly fulfilled in the most interesting cases --- see \cite{Cam,Hoc} for some particular examples and \cite{Tho,AlMa2016} for a systematic description of these and related issues.

We review here the recent theory of \textit{projective system of functionals} \cite{AlMa2016}, which generalizes the theory of projective systems of measures and their extensions \cite{Boc,Tho}, hence providing a unified setting for infinite dimensional integration of probabilistic and oscillatory type. The construction of the infinite dimensional space, the companion set of integrable functions and the associated functionals will be presented in the general setting of projective families and their limits, generalizing Bochner's approach to Kolmogorov's construction \cite{Boc}.

\subsubsection{Projective limit spaces}
Let us start by considering a \textit{directed set} $A$, i.e., a set equipped with a partial order $\leq$ such that for any $J,K\in A$, there exists $R\in A$ satisfying $J\leq R$ and $K\leq R$.

A {\em projective (or inverse) family of sets} $\{E_J,\pi^K_J\}_{J,K\in A}$ is a collection  $\{E_J\}_{J\in A}$  of (non-empty)  sets $E_J$  labelled by the elements of the directed set $A$ and endowed with a corresponding collection of surjective map $ \pi^K_J \colon E_K\to E_J$, with $J,K\in A$ and $J\leq K$, satisfying the following conditions:
\begin{enumerate}
    \item $\pi_K^K$ is the identity on $E_K$. 
    \item $\pi^R_J=\pi^K_J\circ \pi^R_K$ for  all $J\leq K\leq R$, $R\in A$.
\end{enumerate}  
The {\em projective (or inverse) limit} $E_A\coloneqq \varprojlim E_J$ of the projective family  $\{E_J, \pi^K_J\}_{J,K\in A}$ is defined as the following subset of the Cartesian product of the sets $\{E_J\}_{J\in A}$:
\begin{equation} \label{eq-def-EA} E_A\coloneqq \Big\{(x_J)\in \bigtimes_{J\in A}E_J : x_J=\pi^K_J(x_K)\; \hbox{\rm  for all }J\leq K,\, J,K\in A \Big\}. \end{equation}

The projective family $\{E_J,\pi^K_J\}_{J,K\in A}$ is called {\em topological} if each  $E_J$, $J\in A$, is a topological space and the maps $ \pi^K_J \colon E_K\to E_J$, $J\leq K$, are continuous. In this case $E_A = \varprojlim E_J$ is equipped with the coarsest topology making all the projection maps $\pi_J \colon E_A\to E_J$ continuous. 

\begin{example} \label{example1} A basic example of the construction above is obtained in the case where $A=\bN$ and $E_J=\bR^J$, with $J \in A$.The corresponding projective limit space $E_A$ is then isomorphic to the space $\bR^\bN$ of real-valued sequences, which will be also denoted by $\bR^\infty$ from now on. 
\end{example}

Given the coordinate projection $\tilde \pi_J \colon \bigtimes_{J\in A}E_J\to E_J$, we shall denote by $\pi_J\coloneqq \tilde \pi_J|_{E_A}$ its restriction to $E_A$. It is easy to verity that the projection maps $\{\pi_J\}_{J\in A}$ satisfy 
 \begin{equation}\label{rel-pi}
     \pi_J=\pi^K_J\circ \pi_K\qquad \forall J,K\in A, J\leq K.
 \end{equation} 
 
 \subsubsection{Cylinder functions}\label{sez-cylinder function}
For any $J\in A $ we denote with the symbol $\hat E_J$ the space of complex valued functions $f_J \colon E_J\to \bC$, and similarly with $\hat E_A$ the family of maps $f \colon E_A\to \bC$. For any $J,K\in A$ with $J\leq K$ we can define the extension $\Ep_J^K \colon \hat E_J\to \hat E_K$ by setting
\[ \Ep_J^K(f_J)(\omega_K)\coloneqq f_J\big(\pi_J^K(\omega_K)\big), \qquad f_J\in \hat E_J, \, \omega_K\in E_K.\]
A function $f_J \in \hat E_J$, $J\in A$, can be extended in a similar way to a function $\Ep_J^A f_J\coloneqq \Ep_J^A( f_J)\in \hat E_A$ on the projective limit space $E_A$:
\[\Ep_J^Af_J(\omega)\coloneqq f_J(\pi_J\omega), \qquad \omega \in E_A.\] 
In view of \eqref{rel-pi}, the extension maps $\Ep_J^A \colon \hat E_J\to \hat E_A$ satisfy the following condition for any $J,K\in A$, with $J\leq K$:
 \begin{equation}\label{rel-Ep}\Ep^A_J=\Ep^A_K\circ \Ep_J^K.\end{equation}

The functions on $E_A$ obtained as the extension of some $f_J\in \hat E_J$ will be called {\em cylinder functions} and denoted by the symbol $\cC$:
\[ \cC\coloneqq \bigcup_{J\in A}\Ep_J^A(\hat E_J). \]
In particular, in the case of Example \ref{example1} the cylinder functions are those depending explicitly only on a finite number of variables.

\subsubsection{Projective systems of functionals}
\begin{definition}\label{def-projectivesystemsfunctionals}
    Consider, for every $J\in A$, a linear complex-valued functional $L_J \colon \hat E_J^0\to\bC$ with domain $\hat E_J^0\subset \hat E_J$. The collection of such functionals $\{L_J,\hat E_J^0\}_{J\in A}$ forms a {\em projective system of functionals} if, for all $J,K\in A$ with $J\leq K$, the following projective (or coherence or compatibility) conditions hold:
\begin{enumerate}
\item $\Ep_J^K(f_J)\in \hat E_K^0$, $\forall f_J\in \hat E_J^0$.
\item $(L_K)(\Ep_J^K(f_J))=L_J(f_J)$, $\forall f_J\in \hat E_J^0$.
\end{enumerate}

\end{definition}

\begin{example}\label{example:measurespaces}
    As a prominent example of the construction above, consider the case where each element $(E_J)_{J\in A}$ of the projective family $\{ E_J,\pi^K_J \}$ is endowed with a $\sigma$-algebra $\Sigma_J$ of subsets of $E_J$ and all the projection maps $\{\pi^K_J\}_{J\leq K}$ are measurable. Moreover, each measurable space $(E_J, \Sigma_J)$ is endowed with a finite measure $\mu_J$, and the collection $\{\mu_J\}_{J\in A}$ satisfies the {\em consistency property}
    \begin{equation}\label{consistency:measures}
        \pi^K_J\mu_K=\mu_J\qquad \forall J\leq K,
    \end{equation}
    where $\pi^K_J\mu_K$ stands for the image measure of $\mu_K$ through the action of the map $\pi_J^K$.
    
    As a consequence of the consistency  property \eqref{consistency:measures}, it is easy to verify that the collection of functionals $\{L_J,\hat E_J^0\}_{J\in A}$ defined by
    \begin{equation}\label{proj-syst-measure}
        E_J^0\coloneqq L^1(E_J,\Sigma_J,\mu_J), \qquad L_J(f)\coloneqq \int_{E_J} fd\mu_J, \, f\in \hat E_J^0,
    \end{equation}    
    is a projective system of functionals.

\end{example}
\subsubsection{Extensions of  projective systems of functionals}
We denote by $\cC_0\subset \cC$ the subfamily of cylindrical functions consisting of those maps $f\in \hat E_A$ that are obtained as  extensions  $\Ep_J^Af_J$, for some $J\in A$ and some $f_J \in \hat E^0_J$:
\begin{equation}
    \cC_0\coloneqq \bigcup_{J\in A}\Ep^A_J(\hat E_J^0).
\end{equation}

\begin{definition}\label{def-proj-syst}
A {\em projective extension} $(L,D(L))$ of a projective system of functionals $\{L_J, \hat E_J^0\}_{J\in A}$  is  a functional $L$ with domain $D(L)\subseteq \hat E_A$ such that:
\begin{enumerate}
\item $\cC_0 \subseteq D(L)$.
\item $L(\Ep_J^A f_J)=L_J(f_J)$, for all $f_J\in \hat E_J^0$.
\end{enumerate} 
\end{definition}

If $\{E_J, \pi^K_J\}_{J,K\in A}$ is  a  perfect inverse system\footnote{Recall that a projective family $\{E_J, \pi^K_J\}_{J,K\in A}$ is said to be a {\em perfect inverse system} if for all $J\in A$, $x_J\in E_J$, there exists $x\in E_A$ (cf.\ \eqref{eq-def-EA}) such that $x_J=\pi_J x$. Every projection is thus surjective in this setting. The projective family described in Example \ref{example1} is a perfect inverse system. } then a projective extension  $(L,D(L))$ exists, given by the functional 
 \begin{equation}
D(L) \coloneqq \cC_0, \qquad L(f) \coloneqq  L_J(f_J), \qquad f=\Ep_J^Af_J, \quad f_J\in \hat E_J^0.
\end{equation}
This functional is ``minimal'', meaning that any other extension $( L',D( L'))$ will satisfy (by definition) the conditions $D(L)\subseteq D(L')$ and  $L'(f)=L(f)$ for all $f\in D(L)$. 

Besides the minimal extension, it is interesting to investigate the existence of larger extensions $(L, D(L))$. Consider for instance the setting of Example \ref{example:measurespaces}; if all the measures of the consistent family $\{\mu_J\}_{J\in A}$ are probability measures and the probability spaces satisfy a suitable set of sufficient conditions (e.g., $(E_J,\Sigma_J)$ are topological spaces endowed with the Borel $\sigma$-algebra and the measures $\mu_J$ are inner regular), then the Kolmogorov existence theorem \cite{Boc,Yam} implies the existence of a unique probability measure $\mu$ on the projective limit space $E_A$, equipped with the smallest $\sigma$-algebra $\Sigma_A$ that makes all the projections $\pi_J \colon E_A\to E_J$ measurable, in such a way that 
\begin{equation}\label{cond:measure:mu}
    \mu_J=\pi_J\mu\qquad \forall J\in A.
\end{equation} 
A natural extension of the projective system of functionals \eqref{proj-syst-measure}
is thus given by \begin{equation}\label{proj-ext-measure}
    D(L)\coloneqq L^1(E_A,\Sigma_A,\mu), \quad L(f)\coloneqq \int_{E_A} fd\mu, \qquad f\in D(L).
\end{equation}
However, in the case where $\{\mu_J\}_{J\in A}$ is a consistent family of complex measures, a necessary condition for the existence of a complex measure $\mu \colon \Sigma_A\to \bC$ satisfying condition \eqref{cond:measure:mu} is the existence of a uniform bound on the total variation of the measures belonging to the consistent family, that is:
\begin{equation}\label{cond:totalvariation}
    \sup_{J\in A}|\mu_J|<\infty. 
\end{equation}
In other words, if condition \eqref{cond:totalvariation} is not fulfilled, then a projective extension of the form \eqref{proj-ext-measure} cannot exists --- see \cite{Tho} for further details.

An alternative construction of a projective extension relies on a topological argument. Assume that the space $\hat E_A$ is endowed with a topology $\tau$ such that $(\hat E_A, \tau)$ becomes a topological vector space. The minimal extension is said to be {\em closable} in $\tau$ if the closure of the graph 
\[ \cG (\Lmin)\coloneqq \{ (f,\Lmin(f))\in \hat E_A\times \bC \colon f\in D(\Lmin) \} \]
in $\hat E_A\times \bC$ with respect to the product topology $\tau\times \tau_\bC$ is the graph of a well defined functional --- here $\tau_\bC$ denotes the standard topology in $\bC$. In this case we define the {\em closure of $\Lmin$} in $\tau$ as the functional $\bar L_\tau$ whose graph satisfies $\cG (\bar L_\tau)=\overline{\cG (\Lmin)}$. In particular, if $\Lmin \colon\cC_0\to \bC$ is continuous in the $\tau$ topology on $\cC_0$, then $\Lmin$ is closable --- see \cite{AlMa2016} for additional details.

\section{Fresnel integrability of functions in $M^{\infty,1}$}\label{sez-fresnel-findim}
Let us start by recalling the definition of Fresnel integrability in $\rd$ from Section \ref{sec fresnel intro}.
\begin{definition}\label{def-fresnel-integral}
A function $f \colon \rd \to \bC$ is said to be Fresnel integrable if, for all $\vp \in \cS(\rd)$ with $\vp(0)=1$, the limit 
\begin{equation}\label{eq:fresnel int} \vird e^{\hbid |x|^2}f(x) dx \coloneqq \lim_{\varepsilon \downarrow 0} \, \twpih^{-d/2} \ird e^{\hbid |x|^2}f(x)\vp(\varepsilon x) dx
\end{equation} exists and is independent $\vp$. The class of Fresnel integrable functions on $\rd$ is denoted by $\Frd$  
\end{definition}
As a consequence of the Parseval identity, the function $f=1$ belongs to $\Frd$ and the corresponding Fresnel integral is normalized:
\[ \vird e^{\hbid |x|^2}dx = \lim_{\varepsilon \downarrow 0} \, \twpih^{-d/2} i^{d/2} \ird e^{-\hbid |\veps \xi|^2}\hat{\vp}(\xi) d\xi = \twph^{-d/2} \twph^{d/2} \vp(0) = 1. \]
It was proved in \cite{AlBr,ELT} (see also \cite[Theorem 5.2]{mazz_book}) that $\Frd$ includes the space $\cF\cM(\rd)$ of (inverse) Fourier transforms of finite complex measures on $\rd$, i.e., the maps of the form 
\begin{equation}
    f(x)=\int_{\bR^d}e^{i\xi\cdot x}d\mu_f(\xi), \qquad x\in \rd\, ,
\end{equation}
for some complex Borel measure $\mu_f$ on $\rd$. The space $\cF\cM(\rd)$ is actually a Banach algebra for pointwise multiplication with norm $\| f \|_{\cF\cM} \coloneqq |\mu_f|$, where $|\mu_f|$ is the total variation of the measure $\mu_f$.

The Fresnel integral of $f \in \cF\cM(\rd)$ can be computed in terms of the following Parseval-type equality (see, e.g.\  \cite{AlHKMa,ELT} and \cite[Theorem 5.2]{mazz_book}):
\begin{equation}\label{Parseval-type-rd}
    \vird e^{\hbid |x|^2}f(x) dx=\int_{\rd }e^{-\frac{i\hbar}{2}|x|^2}d\mu (x)\,,
\end{equation}
which is a crucial tool in the extension of the theory to the case where $\bR^d$ is replaced by an infinite dimensional Hilbert space, with relevant applications to the mathematical theory of Feynman path integrals \cite{AlHKMa}. 

Motivated by the embedding $\cF\cM(\rd) \subset \Sjord$ (see \cite[Proposition 3.4]{NT_cmp}) already recalled in Section \ref{sec-sjo}, we plan to extend formula \eqref{Parseval-type-rd} beyond the realm of Fourier transforms of measures. To this aim, a preliminary time-frequency analysis of the Fresnel functions 
\begin{equation}
    F_+(x)\coloneqq \twpih^{-d/2} e^{\hbid |x|^2}, \qquad F_-(x) \coloneqq \twpih^{-d/2} e^{-\hbid |x|^2}.
\end{equation}
is needed. It is easy to realize that $\wh{F_+} = i^{d/2}F_-$ and, thanks to the particular form of the map $F_+$, in the case of a Gaussian window function $g$ the Gabor transform of $F_+$ can be computed explicitly, as shown in the following lemma.

\begin{lemma}\label{lem:fresnel_function}
    Set $\gau(y) \coloneqq (\pi \hbar)^{-d/4} e^{-\frac{1}{2\hbar}|y|^2}$, $y \in \rd$. Then, for every $x, \xi \in \rd$,
    \begin{equation}\label{eq:V_gFp gaussian}
        V_{\gau} \Fp(x,\xi) = (2\pi \hbar)^{-d/2}(\pi \hbar)^{-d/4}(1+i)^{-d/2}e^{\frac{i}{4\hbar}(|x|^2-2x\cdot \xi -|\xi|^2)} e^{-\frac{1}{4\hbar}|x-\xi|^2}. 
    \end{equation} 
    In particular, $F_+,F_- \in (M^{1,\infty}(\rd)\cap W^{1,\infty}(\rd))$, but $\Fp,\Fm \notin \Sjord$.

    More generally, for all $g \in \cS(\rd)$ and $N \in \bN$ there exists $C_N>0$ such that  
    \begin{equation}\label{eq:Fp schw decay}
        |V_g \Fp(x,\xi) | \le C_N \lan x-\xi \ran^{-2N}. 
    \end{equation}
\end{lemma}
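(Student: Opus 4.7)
The plan is to establish the explicit identity \eqref{eq:V_gFp gaussian} by direct computation, and then to read off both the modulation/amalgam space membership and the general polynomial estimate from it.

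\textbf{Step 1: Explicit Gaussian integral.} Since $\gau$ is real, $V_{\gau}\Fp(x,\xi) = \cF(\Fp \cdot T_x\gau)(\xi)$ is an honest Gaussian integral in $y$: the full exponent is
\[
-\tfrac{i}{\hbar}\xi\cdot y + \tfrac{i}{2\hbar}|y|^2 - \tfrac{1}{2\hbar}|y-x|^2 = -a|y|^2 + b\cdot y - \tfrac{1}{2\hbar}|x|^2
\]
with $a = (1-i)/(2\hbar)$ and $b = (x - i\xi)/\hbar$. Since $\mathrm{Re}(a) = 1/(2\hbar)>0$, the classical formula $\int_{\rd} e^{-a|y|^2+b\cdot y}dy = (\pi/a)^{d/2}e^{b\cdot b/(4a)}$ (valid for $\mathrm{Re}(a)>0$ and complex $b$ with principal branch) applies. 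Plugging in and multiplying by $(\pi\hbar)^{-d/4}$ and the two Fourier/Fresnel constants, the prefactor collapses to $(2\pi\hbar)^{-d/2}(\pi\hbar)^{-d/4}(1+i)^{-d/2}$ via the branch-consistent identity $i^{1/2}(1-i)^{1/2} = (1+i)^{1/2}$. For the exponent, multiplying $(i|x|^2 - 2ix\cdot\xi - |\xi|^2)/(1-i)$ by $(1+i)/2$ and separating real and imaginary parts yields $\tfrac{i}{4\hbar}(|x|^2 - 2x\cdot\xi - |\xi|^2) - \tfrac{1}{4\hbar}|x-\xi|^2$, exactly as in \eqref{eq:V_gFp gaussian}.

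\textbf{Step 2: Modulation-space consequences.} The key observation is that \eqref{eq:V_gFp gaussian} gives $|V_{\gau}\Fp(x,\xi)| = C\,e^{-|x-\xi|^2/(4\hbar)}$ for an explicit constant $C>0$, so the magnitude depends only on $x-\xi$. Consequently $\sup_{x}|V_{\gau}\Fp(x,\xi)|\equiv C$ is constant in $\xi$, forcing $\|\Fp\|_{\Sjog}=\infty$; on the other hand, a single Gaussian integration shows that $\int|V_{\gau}\Fp(x,\xi)|\,dx$ and $\int|V_{\gau}\Fp(x,\xi)|\,d\xi$ are both finite and uniform in the other variable, so $\Fp\in M^{1,\infty}(\rd)\cap W^{1,\infty}(\rd)$. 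For $\Fm = \overline{\Fp}$, the elementary identity $\overline{V_{\gau}\Fp(x,\xi)} = V_{\gau}\Fm(x,-\xi)$ (valid because $\gau$ is real) shows that $|V_{\gau}\Fm|$ depends only on $x+\xi$, and the same three conclusions follow.

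\textbf{Step 3: General Schwartz window and polynomial decay.} The change of variable $z = y-x$ inside $V_g\Fp(x,\xi)$ factors out a unit-modulus phase and reduces everything to an ``off-diagonal'' integral of the form
\[
H(x-\xi) := \int_{\rd} e^{(i/\hbar)(x-\xi)\cdot z}\,G(z)\,dz, \qquad G(z) := e^{(i/(2\hbar))|z|^2}\,\overline{g(z)}.
\]
The chirp $e^{(i/(2\hbar))|z|^2}$ is $C^\infty$ with derivatives bounded by polynomials, and $g\in\cS(\rd)$, so Leibniz's rule immediately yields $G\in\cS(\rd)$. Then $H$ is a constant multiple of $\cF^{-1}(G)$, hence also Schwartz, and \eqref{eq:Fp schw decay} follows at once.

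\textbf{Main obstacle.} None of these steps is conceptually deep; the only delicate point is the bookkeeping of the complex prefactor and exponent in Step~1, where principal-branch identities such as $(1-i)(1+i)=2$ and $i(1-i)=1+i$ must be applied consistently so that the magnitude of the final expression is manifestly $e^{-|x-\xi|^2/(4\hbar)}$.
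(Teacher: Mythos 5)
Your proof is correct, but for the key decay estimate \eqref{eq:Fp schw decay} it takes a genuinely different route from the paper's. The paper expands $\lan x-\xi\ran^{2N}$ binomially into terms $(x-y)^\alpha(y-\xi)^\beta$, observes that $(y-\xi)^\beta\phi(y,\xi)$ is, up to lower-order terms, a multiple of $\partial_y^\beta\phi(y,\xi)$ for the phase $\phi(y,\xi)=e^{-\hbi \xi\cdot y}e^{\hbid|y|^2}$, and then integrates by parts recursively, shifting all derivatives onto $(x-y)^\alpha\overline{g(y-x)}$. Your Step 3 instead exploits the quadratic structure exactly: the substitution $z=y-x$ absorbs the cross terms into a unimodular phase and exhibits $|V_g\Fp(x,\xi)|$ as a constant times the modulus of the (inverse) Fourier transform of $G=e^{\frac{i}{2\hbar}|\cdot|^2}\,\overline{g}\in\cS(\rd)$ evaluated at $x-\xi$, so the decay follows from the fact that $\cF$ maps $\cS(\rd)$ onto itself. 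Your Steps 1 and 2 match the paper in substance: the paper dismisses the Gaussian computation as straightforward and outsources the space memberships to a citation, whereas you derive them self-containedly from \eqref{eq:V_gFp gaussian}; your observation that $|V_{\gau}\Fp|$ depends only on $x-\xi$ (and $|V_{\gau}\Fm|$ only on $x+\xi$, via the conjugation identity) is exactly the right mechanism for $\Fp,\Fm\in M^{1,\infty}(\rd)\cap W^{1,\infty}(\rd)$ while $\Fp,\Fm\notin\Sjord$. Comparing the two approaches to \eqref{eq:Fp schw decay}: yours is shorter and actually yields a stronger conclusion (the modulus of $V_g\Fp$ is a fixed Schwartz function of $x-\xi$), but it hinges on the exact Gaussian/quadratic reduction; the paper's non-stationary-phase, integration-by-parts scheme is more laborious yet more robust, since it only uses that the phase gradient is affine in $y$ and so is the kind of argument that can be adapted to higher-order chirps $e^{\hbid|x|^k}$, $k>2$, which the authors announce as future work.
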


\begin{proof}
    Formula \eqref{eq:V_gFp gaussian} can be proved by straightforward calculations, and the function space belonging follows immediately --- see for instance \cite[Theorem 14]{benyi_unimod}. 
Concerning \eqref{eq:Fp schw decay}, we need to prove the boundedness of
\begin{equation}
    \lan x-\xi \ran^{2N} |V_g \Fp(x,\xi) |  = \twph^{-d} \left| \ird \lan x-\xi \ran^{2N} e^{-\hbi \xi \cdot y} e^{\hbid |y|^2} \overline{g(y-x)} dy  \right|. 
\end{equation}
Note that $\lan x-\xi \ran^{2N} = \lan x-y+y-\xi \ran^{2N} = \sum_{|\alpha+\beta| \le 2N} c_{\alpha,\beta} (x-y)^\alpha (y-\xi)^\beta$, for suitable $c_{\alpha,\beta}\ge 0$ with $\alpha, \beta \in \bN^d$. It is then enough to prove that
\begin{equation}
    \left| \ird  (y-\xi)^\beta \phi(y,\xi) (x-y)^\alpha \overline{g(y-x)} dy  \right| \le C_{\alpha,\beta}, 
\end{equation} for some $C_{\alpha,\beta}>0$, where we set $\phi(y,\xi) \coloneqq e^{- \hbi \xi \cdot y} e^{\hbid |y|^2}$. We highlight that
\begin{equation}
    \partial_y^\beta \phi(y,\xi) = \Big(\hbi\Big)^{|\beta|}(y-\xi)^\beta \phi(y,\xi) + \sum_{\gamma < \beta} k_{\gamma} (y-\xi)^\gamma\phi(y,\xi),
\end{equation} for suitable coefficients $k_\gamma \in \bC$, $\gamma \in \bN^d$. Integration by parts yields
\begin{align*}
    & \phantom{=} \left| \ird  (y-\xi)^\beta \phi(y,\xi) (x-y)^\alpha \overline{g(y-x)} dy  \right| \\ & =  \hbar^{|\beta|}  \left| \ird  \phi(y,\xi) \partial^\beta_y \big[(x-y)^\alpha \overline{g(y-x)}\big] dy - \sum_{\gamma < \beta} k_\gamma \ird (y-\xi)^\gamma \phi(y,\xi) (x-y)^\alpha \overline{g(y-x)} dy \right| \\ 
    & \le \ird  \left|  \partial^\beta_y \big[y^\alpha \overline{g(y)}\big] \right| dy  + \left| \sum_{\gamma < \beta} k_\gamma \ird (y-\xi)^\gamma \phi(y,\xi) (x-y)^\alpha \overline{g(y-x)} dy \right| \\ 
    & \le C_{\alpha,\beta}' + \sum_{\gamma < \beta} |k_\gamma| \left| \ird (y-\xi)^\gamma \phi(y,\xi) (x-y)^\alpha \overline{g(y-x)} dy \right|,
\end{align*} for some constant $C_{\alpha,\beta}'>0$, hence a recursive application of the same argument proves the claim. 
\end{proof}
The following lemma provides some technical results to be used later on.
\begin{lemma}\label{lem:boundH}
    Let $f \in \Sjord$ and $g,\varphi \in \cS(\rd)$. For $0 < \veps < 1$, set $\vpe(x) \coloneqq \varphi(\veps y)$ for all $y \in \rd$. There exists $H \in L^1(\rd)$, independent of $\veps$, such that
    \begin{equation}\label{eq:vprod-bound}
    \sup_{x\in \rd}|V_g(f\vpe)(x,\xi)| \le H(\xi), \qquad \xi \in \rdd,
\end{equation}
    \begin{equation}\label{eq:vprod-bound-fsjo}
    \sup_{\xi\in \rd}|V_g(\widehat{f}\vpe)(x,\xi)| \le H(x), \qquad x \in \rdd.
\end{equation}
\end{lemma}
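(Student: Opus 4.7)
The plan is to exploit uniform rapid decay in the frequency variable of the Gabor transform of $\vpe$, combined with the convolution structure for the STFT of a product of distributions. By the standard reconstruction-based equivalence of $\Sjo$- and $W^{\infty,1}$-type estimates under change of Schwartz window (which introduces an extra convolution against a rapidly decaying $L^1$ kernel), I may assume without loss of generality that the window admits a multiplicative factorization $g = g_1 g_2$ with $g_1, g_2 \in \cS(\rd) \smo$ (for instance two Gaussians).

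The starting point is to show that for every $N \in \bN$ there exists $C_N > 0$, depending only on $\vp, g_2, N$, such that
\begin{equation}
|V_{g_2}\vpe(x,\xi)| \le C_N \langle \xi \rangle^{-N}, \qquad x, \xi \in \rd,\; \veps \in (0,1).
\end{equation}
This follows from $N$ iterated integrations by parts in the $y$ variable of the defining integral of $V_{g_2}\vpe(x,\xi)$: each step produces a factor $(i\xi_j/\hbar)^{-1}$ and distributes a derivative either on $\vp(\veps y)$---yielding $\veps^k(\partial^k\vp)(\veps y)$, uniformly bounded by $\|\partial^k\vp\|_{L^\infty}$ since $\veps \in (0,1)$---or on $\overline{g_2(y-x)}$, leaving a translate of a Schwartz function. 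The resulting integrand is then bounded uniformly in $x, \veps$ by an $L^1_y$-function, yielding the claim upon combining with the trivial bound. In particular, for $N>d$, $\sup_{x \in \rd,\, \veps \in (0,1)}\|V_{g_2}\vpe(x,\cdot)\|_{L^1_\xi} < \infty$.

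The second step is to apply the STFT product identity, verified by a direct Fubini computation:
\begin{equation}
V_{g_1 g_2}(F H)(x,\xi) = (2\pi\hbar)^{-d/2}\bigl(V_{g_1}F(x,\cdot)\ast V_{g_2}H(x,\cdot)\bigr)(\xi).
\end{equation}
For \eqref{eq:vprod-bound}, with $F = f$ and $H = \vpe$, taking sup in $x$ and using $\sup_x(A \cdot B) \le (\sup_x A)(\sup_x B)$ yields
\begin{equation}
\sup_x|V_{g_1 g_2}(f\vpe)(x,\xi)| \le C\,(M_{g_1}f \ast \Phi_N)(\xi),
\end{equation}
where $M_{g_1}f(\eta) \coloneqq \sup_x|V_{g_1}f(x,\eta)| \in L^1(\rd)$ (as $f \in \Sjord$) and $\Phi_N(\zeta) \coloneqq C_N\langle\zeta\rangle^{-N} \in L^1(\rd)$ for $N>d$; the convolution is the desired $L^1$-envelope $H(\xi)$, independent of $\veps$. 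For \eqref{eq:vprod-bound-fsjo}, with $F = \hat f$, $H = \vpe$, taking sup in $\xi$ and using $\sup_\xi|A \ast B(\xi)| \le \|A\|_{L^\infty}\|B\|_{L^1}$ gives
\begin{equation}
\sup_\xi|V_{g_1 g_2}(\hat f\vpe)(x,\xi)| \le C\,\bigl(\sup_\eta|V_{g_1}\hat f(x,\eta)|\bigr) \cdot \|V_{g_2}\vpe(x,\cdot)\|_{L^1_\xi}.
\end{equation}
The second factor is uniformly bounded in $x, \veps$ by the previous step, and the first factor is in $L^1_x$ with integral $\|\hat f\|_{W^{\infty,1}_{(g_1)}} = \|f\|_{M^{\infty,1}_{(\hat g_1)}} < \infty$, via the fundamental identity \eqref{eq:fund-id-stft} together with the relation $W^{\infty,1} = \cF M^{\infty,1}$.

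The main technical hurdle is the reduction from the arbitrary window $g \in \cS(\rd)\smo$ in the hypothesis to the factorized one $g_1 g_2$; this is handled by the reconstruction formula \eqref{eq:invers-stft}, which introduces a further convolution with a rapidly decaying $L^1$ kernel of the form $\sup_{x'}|V_g\gamma(x',\cdot)|$ (with $\gamma \in \cS$ a dual window) and thus preserves the $\veps$-independent $L^1$-integrability of the envelope $H$.
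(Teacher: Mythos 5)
Your proof is correct, and its core coincides with the paper's own argument: the same product--convolution identity for the STFT of a product, the same integration-by-parts bound giving $|V_{g_2}\vpe(x,\xi)| \le C_N \lan \xi \ran^{-N}$ uniformly in $x$ and $\veps \in (0,1)$, and the same two Young-type estimates (convolution of two $L^1$ envelopes in $\xi$ for \eqref{eq:vprod-bound}; the $L^\infty$--$L^1$ pairing together with $\widehat{f} \in \fSjord$ for \eqref{eq:vprod-bound-fsjo}). The one genuine difference is how the arbitrary window is handled. The paper factorizes the given window directly, $g = g_1 g_2$ with $g_1, g_2 \in \cS(\rd)$, invoking Voigt's factorization theorem for the Schwartz class, so no change of window is ever needed; you instead reduce to a factorizable (Gaussian) window by a change-of-window argument based on the reconstruction formula \eqref{eq:invers-stft}. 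Your route avoids citing the nontrivial factorization theorem at the price of one extra, routine convolution step, and it is sound: the change-of-window inequality $|V_g u| \lesssim (|V_{g_0} u| \ast |V_g g_0|)$, a convolution on $\rdd$, lets an $\veps$-independent $L^1$ envelope for the window $g_0$ pass to the window $g$ after convolution with an integrable kernel. Two cosmetic points. First, the kernel you convolve against should be the \emph{integral} of $|V_g g_0|$ in the complementary variable --- namely $\int_{\rd} |V_g g_0(x',\cdot)|\, dx'$ for \eqref{eq:vprod-bound} and $\int_{\rd} |V_g g_0(\cdot,\xi')|\, d\xi'$ for \eqref{eq:vprod-bound-fsjo} --- not its supremum as you wrote; this is immaterial here only because $V_g g_0$ decays rapidly in both variables. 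Second, iterating single integrations by parts produces factors $|\xi_j|^{-1}$ rather than the isotropic decay $\lan \xi \ran^{-N}$ you state; it is cleaner to integrate by parts with $(1-\Delta_y)^N$, exactly as the paper does, which yields $(1+|\xi/\hbar|^2)^{-N}$ in one stroke.
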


\begin{proof} Let us prove \eqref{eq:vprod-bound} first. Resorting to the Parseval formula it is easy to prove the following identity for the Gabor transform of the product of $h_1,h_2 \in L^\infty(\rd)$, or $h_1 \in \cS'(\rd)$ and $h_2 \in \cS(\rd)$: 
\begin{equation}\label{eq:stft-prod}
V_g(h_1\cdot h_2)(x,\xi)= (V_{g_1} h_1(x,\cdot)*V_{g_2} h_2(x,\cdot))(\xi), \qquad (x,\xi) \in \rdd, \end{equation} where $g_1,g_2 \in \cS(\rd)$ are such that $g=g_1g_2$ --- the existence of two Schwartz functions whose product coincides with a given one of the same type is guaranteed by the factorization properties of such class \cite{voigt}. As a consequence, we infer
\[ \sup_{x\in \rd} |V_g(f\vpe)(x,\xi)| \le \Big( \Big( \sup_{x \in \rd} |V_{g_1} f(x,\cdot)|\Big) * \Big( \sup_{x \in \rd} |V_{g_2} \vpe(x,\cdot)| \Big) \Big) (\xi), \] so that it is enough to prove the existence of $\tilde{H} \in L^1(\rd)$, independent of $\veps$, such that \[ \sup_{x \in \rd} |V_{g_2} \vpe(x,\xi)| \le \tilde{H}(\xi),\] hence \eqref{eq:vprod-bound} follows with $H = H_1 = (\sup_{x\in \rd} |V_{g_1} f(x,\cdot)|)*\tilde{H}$. 

Recall that for any $N \in \bN$ we have the identity 
\[ (1-\Delta_y)^N e^{-\hbi \xi \cdot y} = (1+|\xi/\hbar|^2)^N e^{-\hbi \xi \cdot y}.  \] Therefore, integration by parts yields
\begin{align*} \sup_{x\in \rd} |V_{g_2}(\vpe)(x,\xi)| & = \twph^{-d/2} \sup_{x\in \rd} \Big| \ird e^{-\hbi \xi \cdot y} \vp(\veps y) \overline{g(y-x)}dy \Big| \\
& \le \twph^{-d/2} (1+|\xi/\hbar|^2)^{-N} \sup_{x\in \rd} \ird |(1-\Delta_y)^N \big(\vp(\veps y) \overline{g(y-x)}\big)| dy,
\end{align*} and the latter integral is easily seen to be bounded, uniformly with respect to $\veps \in (0,1)$. The desired result follows after setting $\tilde{H}(\xi) = \twph^{-d/2} C_N (1+|\xi|^2)^{-N}$, which is an integrable function for $N$ sufficiently large. 

The proof of \eqref{eq:vprod-bound-fsjo} is similar. Starting from \eqref{eq:stft-prod}, we have 
\begin{align*}
    \sup_{\xi \in \rd} |V_g(\wh{f} \vpe)(x,\xi)| & = \| V_{g_1} \widehat{f} (x,\cdot)*V_{g_2} \vpe (x,\cdot) \|_{L^\infty} \\
    & \le \| V_{g_1} \widehat{f} (x,\cdot)\|_{L^\infty} \| V_{g_2} \vpe(x,\cdot) \|_{L^1}. 
\end{align*}
If $f \in \Sjord$ then $\wh{f} \in \fSjord$, hence $\| V_{g_1} \widehat{f} (x,\cdot)\|_{L^\infty} \in L^1_x(\rd)$. It is enough to prove that $\sup_{x \in \rd} \| V_{g_2} \vpe(x,\cdot) \|_{L^1}$ is bounded, uniformly with respect to $\veps$, to obtain the claim with $H(x)=H_2(x)= \big(\sup_{x \in \rd} \| V_{g_2} \vpe(x,\cdot) \|_{L^1}\big) \| V_{g_1} \widehat{f} (x,\cdot)\|_{L^\infty}$. In fact, we already proved above that $|V_{g_2}(\vpe)(x,\xi)| \le \twph^{-d/2} C_N (1+|\xi|^2)^{-N}$, hence the claim immediately follows. 
\end{proof}

\subsection{Main result} We are now ready to show the Fresnel-integrability of all functions $f$ belonging to the  Sj\"ostrand class $\Sjord$, along with a novel representation formula of Parseval type in phase space.
\begin{theorem}\label{thm:fr-int-sjo}
Every $f \in \Sjord$ is Fresnel integrable, and for every $g,\g \in \cS(\rd)$ such that $\lan \g, g \ran \ne 0$ we have
\begin{equation}\label{Rep-formula-rd}
    \vird e^{\hbid |x|^2}f(x) dx = \frac{1}{\lan \g,g \ran} \irdd V_g F_+(x,\xi) \cV_\g f(x,\xi) dxd\xi. 
\end{equation}
\end{theorem}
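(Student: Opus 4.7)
The strategy is to rewrite the regularized oscillatory integral as a Gabor-type pairing whose integrand admits a uniform-in-$\veps$ integrable majorant, and then pass to the limit $\veps\downarrow 0$ by dominated convergence. Lemmas \ref{lem:fresnel_function}–\ref{lem:boundH} supply the two ingredients that make this possible.

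First I would observe the trivial identity
\begin{equation*}
    \twpih^{-d/2}\int_{\rd} e^{\hbid|x|^2}\, f(x)\vp(\veps x)\,dx = \int_{\rd} F_+(x)\,(f\vpe)(x)\,dx,
\end{equation*}
where $\vpe(x)\coloneqq\vp(\veps x)$. Since $F_+\in M^{1,\infty}(\rd)$ by Lemma \ref{lem:fresnel_function} and $f\vpe\in M^{\infty,1}(\rd)$ with norm bounded uniformly in $\veps\in(0,1)$ by Lemma \ref{lem:boundH}, and since $M^{1,\infty}$ and $M^{\infty,1}$ are H\"older-dual modulation spaces, the phase-space duality formula \eqref{eq dual star} (upon relabeling $g\leftrightarrow\g$) yields
\begin{equation*}
    \int_{\rd} F_+\,(f\vpe)\,dx = \frac{1}{\lan\g,g\ran}\iint_{\rdd} V_g F_+(x,\xi)\,\cV_\g(f\vpe)(x,\xi)\,dx\,d\xi
\end{equation*}
for every $g,\g\in\cS(\rd)$ with $\lan\g,g\ran\neq 0$.

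Next I would let $\veps\downarrow 0$ under the integral sign. Pointwise convergence $\cV_\g(f\vpe)(x,\xi)\to\cV_\g f(x,\xi)$ on $\rdd$ follows from dominated convergence applied to the defining integral of $\cV_\g$, using that $\vpe\to 1$ boundedly and that $f\,\g(\cdot-x)\in L^1(\rd)$ for each fixed $x$. For the uniform majorant in the $(x,\xi)$-integral I would combine the rapid decay $|V_g F_+(x,\xi)|\le C_N\lan x-\xi\ran^{-2N}$ given by Lemma \ref{lem:fresnel_function} with the bound $\sup_{x}|\cV_\g(f\vpe)(x,\xi)|\le H(-\xi)$ obtained from Lemma \ref{lem:boundH} applied with window $\bar\g$, together with the identity $\cV_\g(f\vpe)(x,\xi)=V_{\bar\g}(f\vpe)(x,-\xi)$. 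For $N>d/2$ the majorant $C_N\lan x-\xi\ran^{-2N}H(-\xi)$ belongs to $L^1(\rdd)$: a change of variables $y=x-\xi$ decouples the two factors, and $H\in L^1(\rd)$ by construction.

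Passing to the limit in the duality identity therefore gives
\begin{equation*}
    \lim_{\veps\downarrow 0}\twpih^{-d/2}\int_{\rd} e^{\hbid|x|^2} f(x)\vp(\veps x)\,dx = \frac{1}{\lan\g,g\ran}\iint_{\rdd} V_g F_+(x,\xi)\,\cV_\g f(x,\xi)\,dx\,d\xi.
\end{equation*}
The right-hand side is manifestly independent of the regularizer $\vp$, so $f\in\Frd$ and the representation formula \eqref{Rep-formula-rd} holds. The only genuinely delicate point is the construction of the uniform $L^1(\rdd)$ majorant, which is exactly the content of Lemmas \ref{lem:fresnel_function}–\ref{lem:boundH}; once these are granted, everything else is routine phase-space bookkeeping.
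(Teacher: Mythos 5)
Your proof has the same skeleton as the paper's: rewrite $T_\veps(f)\coloneqq\twpih^{-d/2}\ird e^{\hbid|x|^2}f(x)\vp(\veps x)\,dx$ as $\ird \Fp\,(f\vpe)\,dx$, identify this quantity with the phase-space pairing, and let $\veps\downarrow 0$ by dominated convergence; your construction of the uniform majorant $C_N\lan x-\xi\ran^{-2N}H(-\xi)$ and your verification of the pointwise convergence $\cV_\g(f\vpe)\to\cV_\g f$ are both correct and match the paper. The gap is the middle step, which is precisely where the paper does its actual work. You assert that, ``since $M^{1,\infty}$ and $M^{\infty,1}$ are H\"older-dual modulation spaces,'' the formula \eqref{eq dual star} yields
\begin{equation}
\ird \Fp(x)\,(f\vpe)(x)\,dx=\frac{1}{\lan\g,g\ran}\irdd V_g\Fp(x,\xi)\,\cV_\g(f\vpe)(x,\xi)\,dxd\xi.
\end{equation}
But \eqref{eq dual star} is a \emph{definition} of the extended bracket $\lan\cdot,\cdot\ran_*$; it does not assert that this bracket coincides with a Lebesgue integral whenever the pointwise product happens to be integrable, and that consistency is exactly what needs to be proved here. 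The duality you invoke is also not available as stated: the identification $(M^{p,q})^*=M^{p',q'}$ holds for $p,q<\infty$, so neither of $M^{1,\infty}$, $M^{\infty,1}$ is the Banach dual of the other; and the obvious density argument fails as well, because $\cS(\rd)$ is not norm-dense in $M^{\infty,1}(\rd)$ (its norm closure cannot contain $f\equiv 1$, since the $M^{\infty,1}$ norm dominates the $L^\infty$ norm while Schwartz functions vanish at infinity).

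The gap can be filled in two ways. The paper's way is a direct computation, valid for each fixed $\veps>0$: writing $V_g\Fp(x,\xi)=\cF[\Fp\cdot\overline{T_xg}](\xi)$ and $\cV_\g(f\vpe)(x,\xi)=\overline{\cF[\overline{f\vpe}\cdot\overline{T_x\g}](\xi)}$, apply the Parseval identity in the $\xi$-variable for each fixed $x$ (legitimate because both $\Fp\overline{T_xg}$ and $\overline{f\vpe}\,\overline{T_x\g}$ lie in $L^2(\rd)$), and then Fubini --- justified by the very same $L^1(\rdd)$ majorant --- to integrate in $x$ first; the $x$-integration produces $\ird\g(y-x)\overline{g(y-x)}\,dx=\lan\g,g\ran$, leaving $\lan\g,g\ran\ird\Fp(y)f(y)\vpe(y)\,dy$, as desired. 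Alternatively, a duality argument can be salvaged, but along a different pair of spaces: since $\vpe\in\cS(\rd)$ and $M^{\infty,1}\cdot M^{1,1}\subset M^{1,1}$, one has $f\vpe\in M^{1,1}(\rd)$, while $\Fp\in M^{\infty,\infty}(\rd)=(M^{1,1}(\rd))^*$; here $\cS(\rd)$ \emph{is} norm-dense in $M^{1,1}(\rd)$ and $M^{1,1}(\rd)\hookrightarrow L^1(\rd)$, so the extended bracket and the Lebesgue integral agree by continuity. Either repair must be inserted; as written, the central identity is asserted rather than proved.
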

\begin{proof}
Let $f$ be a function in $\Sjord$ and fix $g,\gamma \in \cS(\rd)$ with $\lan \g,g \ran=1$. Given $\vp \in \cS(\rd)$ with $\vp(0)=1$, consider the following family of functionals indexed by $\veps \in (0,1)$:
\begin{equation}\label{eq:tveps}
    T_\veps(f) \coloneqq  \twpih^{-d/2} \ird e^{\hbid |x|^2}f(x)\vp(\varepsilon x) dx. 
\end{equation} Note that $T_\veps(f)$ is a well defined Lebesgue integral since $\Fp f \in L^\infty(\rd)$ and $\vp \in L^1(\rd)$. 

We prove below that $T_\veps(f)$ converges as $\veps \downarrow 0$ and provide a representation formula for the limit $I(f)$, namely
\begin{equation}
    I(f) \coloneqq \lan F_+, \bar{f} \ran_* = \frac{1}{\lan \g,g \ran} \irdd V_g F_+(x,\xi) \cV_\g f(x,\xi) dxd\xi.
\end{equation} 

First, we prove that the integral in \eqref{eq:tveps} coincides with the pairing between $f \vpe \in M^{\infty,1}(\rd)$ (we set $\vpe(x) \coloneqq \vp(\veps x)$ for brevity) and $F_+ \in M^{1,\infty}(\rd)$, namely it does coincide with the functional
\begin{equation}
    I_\veps(f) \coloneqq \lan F_+, \overline{f\vpe} \ran_* = \frac{1}{\lan \g,g \ran} \irdd V_g F_+(x,\xi) \cV_\g (f\vpe)(x,\xi) dxd\xi.
\end{equation}
Note that $I_\veps(f)$ is a well-defined Lebesgue integral in view of Lemmas \ref{lem:fresnel_function} and \ref{lem:boundH}, since 
\begin{equation}
    |V_g F_+(x,\xi)| |\cV_\g (f\vpe)(x,\xi)| \le C_N \lan x-\xi \ran^{-2N} H(\xi),  
\end{equation} that is a function in $L^1(\rdd)$ provided that $N$ is taken sufficiently large. 

The proof that $I_\veps(f) = T_\veps(f)$ then follows by the Parseval equality:
\begin{align}
    I_\veps(f) & = \frac{1}{\lan \g,g \ran} \irdd V_g \Fp(x,\xi) \cV_\g (f\vpe)(x,\xi) dxd\xi  \\ & = \frac{1}{\lan \g,g \ran} \ird \Big(
    \ird \cF [\Fp \cdot \overline{T_x g}](\xi) \overline{\cF[ \overline{f\vpe} \cdot \overline{T_x \g}](\xi)} d\xi \Big) dx \\
    & = \frac{1}{\lan \g,g \ran} \ird \Big( \ird \Fp(y) f(y)\vpe(y) \g(y-x)\overline{g(y-x)} dy \Big) dx \\
    & = \ird \Fp(y) f(y)\vpe(y) dy \\
    & = T_\veps(f). 
\end{align}

Finally, we claim that 
\begin{align} \lim_{\veps \downarrow 0} I_\veps(f) & = \lim_{\veps \downarrow 0} \frac{1}{\lan \g,g \ran} \irdd V_g F_+(x,\xi) \cV_\g (f\vpe)(x,\xi) dxd\xi \\ & = \frac{1}{\lan \g,g \ran} \irdd V_g F_+(x,\xi) \cV_\g f (x,\xi) dxd\xi \\ & = I(f). \end{align}
We argue by dominated convergence, resorting to Lemma \ref{lem:boundH} after noting that, since $f\vpe \to f$ in the sense of temperate distributions as $\veps \downarrow 0$, we have 
    \[ \cV_g(f\vpe)(x,\xi) \to \cV_gf(x,\xi), \quad \text{for all } (x,\xi) \in \rdd. \qedhere \] 
\end{proof}

The significance of this result is related to the fact that, as already anticipated, the Sj\"ostrand class $ M^{\infty,1}(\bR^d)$ is strictly larger than the Banach algebra $\cF\cM(\bR^d)$. We present below a concrete example of a smooth function with bounded derivatives (hence in $\Sjord$ --- see Section \ref{sec-sjo}) that does not arise as the Fourier transform of a measure. 

\begin{example}\label{example-embed}
Consider the function $f \colon \rd \to \bR$ given by $f(x)=\cos |x|$, which clearly belongs to $C^\infty_{\mathrm{b}}(\rd) \subset M^{\infty,1}(\rd)$. We claim that $f \notin \cF\cM(\rd)$ as long as $d>1$. In fact, we will consider only the case where $d=3$ here, leaving the details of the analogous proof of the other cases to the interested reader. 

We argue by contradiction, assuming that there exists $\mu \in \cM(\bR^3)$ such that $f=\cF \mu$, where the Fourier transform $\cF$ is here defined as in Section \ref{sec notation} with $\hbar=1$. Recall that the Fourier multiplier $m_t(D):=\cF^{-1} m_t \mathcal{F}$ with symbol $m_t(\xi)=\cos (t|\xi|)$, $t>0$, $\xi \in \bR^3$, corresponds to the evolution operator associated with the wave equation: 
\begin{equation}
\begin{cases}
    \partial_t^2 u(t,x)-\Delta_x u(t,x)= 0 \\
    u(0,x)= g(x), \qquad \partial_t u(0,x)=0, 
\end{cases} 
\end{equation} where $g \in \cS(\bR^3)$, that is
\begin{equation}
    u(t,x) = m_t(D) g(x) = (\cF^{-1}m_t \cF g)(x)=(\cF^{-1}m_t) \ast g(x), \qquad (t,x) \in \bR_+ \times \bR^3. 
\end{equation} In particular, since $m_1(\xi)=f(\xi)$, we have 
\begin{equation}
    u(1,x) = f(D)g(x)= \mu*g(x)= \int_{\bR^3} g(x-y)d\mu(y).
\end{equation} As a result, the functional $g \mapsto u(1,0)$ is continuous on the space $C_\mathrm{c}(\bR^3)$ of compactly supported continuous functions on $\bR^3$. On the other hand, the Kirchhoff formula for the solution of the wave equation via spherical means \cite[Section 2.4.1]{evans} yields
\begin{equation}
    u(t,x)= \partial_t \Big( t \dashint_{\partial B(x,t)} g(y) dS(y) \Big), 
\end{equation} where $\dashint_{\partial B(x,t)}$ denotes the mean over $\partial B(x,t)$ with respect to the surface measure $dS$. In particular, if $g$ is a radial function, i.e., $g(y) = \tilde g (|y|)$, we obtain
\begin{equation}
    u(1,0)=\partial_t (t \tilde{g}(t))|_{t=1}=\tilde g(1) + \tilde g'(1), 
\end{equation} which fails to be controlled by $\|g\|_{\infty}=\sup_{x \in \bR^3} |g(x)|$, in general.
\end{example}
\subsection{Fresnel integrability of distributions in $W^{\infty,1}$}\label{sec-int-fousjo}
The rest of this section is devoted to the study of the Fresnel integrability of \textit{distributions} belonging to the Wiener amalgam space $\fSjo(\rd) = \cF \Sjord $. Clearly, the Fresnel integral of a function $f \in \fSjo(\rd)$ cannot be defined as in \eqref{eq:fresnel int}, since \eqref{eq:tveps} generally fails to be a well defined Lebesgue integral in this case. 

The proof of Theorem \ref{thm:fr-int-sjo} suggests an alternative path to circumvent this obstruction. In particular, we first \textit{define} the Fresnel integral of $f \in \fSjord$ as follows: for all $\vp \in \cS(\rd)$ with $\vp(0)=1$ and $g,\g \in \cS(\rd)$ such that $\lan \g, g \ran \ne 0$, we set 
\begin{equation}\label{eq-virdstar} \vird^* e^{\hbid |x|^2}f(x) dx \coloneqq \lim_{\varepsilon \downarrow 0} \frac{1}{\lan \g,g \ran} \irdd V_g F_+(x,\xi) \cV_\g (f\vpe)(x,\xi) dxd\xi. 
\end{equation}
The following result shows that this alternative definition allows us to generalize Theorem \ref{thm:fr-int-sjo}.
\begin{theorem}\label{thm:fr-int-fsjo}
Every $f \in \fSjord$ is Fresnel integrable, and for every $g,\g \in \cS(\rd)$ such that $\lan \g, g \ran \ne 0$ we have
\begin{equation}
    \vird^* e^{\hbid |x|^2}f(x) dx = \frac{1}{\lan \g,g \ran} \irdd V_g F_+(x,\xi) \cV_\g f(x,\xi) dxd\xi. 
\end{equation}
\end{theorem}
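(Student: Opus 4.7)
\textbf{Proof plan for Theorem \ref{thm:fr-int-fsjo}.} The plan is to mimic the structure of the proof of Theorem \ref{thm:fr-int-sjo}, but trading the roles of the ``space'' and ``frequency'' variables: instead of a bound of the form $\sup_x|V_g(f\vpe)(x,\xi)|\le H(\xi)$ with $H\in L^1$, we will rely on the symmetric Fourier-side estimate \eqref{eq:vprod-bound-fsjo} of Lemma \ref{lem:boundH}, combined with the fast diagonal decay \eqref{eq:Fp schw decay} of $V_gF_+$.

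First, I will check that the right-hand side of the claimed formula is a well-defined Lebesgue integral. For $f\in\fSjord$ one has $\hat f\in\Sjord$, hence (as recalled in Section \ref{sec-sjo}) $\|V_\g f(x,\cdot)\|_{L^\infty_\xi}=\|V_{\hat\g}\hat f(\cdot,-x)\|_{L^\infty}$ is an $L^1_x$ function with norm comparable to $\|f\|_{W^{\infty,1}_{(g)}}$. Combining this with the bound $|V_gF_+(x,\xi)|\le C_N\langle x-\xi\rangle^{-2N}$ from Lemma \ref{lem:fresnel_function}, Fubini yields
\begin{equation}
    \irdd |V_gF_+(x,\xi)||\cV_\g f(x,\xi)|\,dx\,d\xi\le C_N\Big(\ird \langle y\rangle^{-2N}dy\Big)\ird \sup_{\xi\in\rd}|\cV_\g f(x,\xi)|\,dx<\infty,
\end{equation}
for $N$ large enough.

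Next, for each $\veps\in(0,1)$ I will show that the integrand appearing in \eqref{eq-virdstar} is dominated, uniformly in $\veps$, by a fixed $L^1(\rdd)$ function. Indeed, noting that $\cV_\g(f\vpe)(x,\xi)=V_{\bar\g}(f\vpe)(x,-\xi)$, we may apply the second bound \eqref{eq:vprod-bound-fsjo} of Lemma \ref{lem:boundH} to $f\in\fSjord$ (which plays the role of $\hat f$ there) to obtain the existence of $H\in L^1(\rd)$ independent of $\veps$ with $\sup_{\xi\in\rd}|\cV_\g(f\vpe)(x,\xi)|\le H(x)$ for every $x\in\rd$. Paired with \eqref{eq:Fp schw decay}, this gives
\begin{equation}
    |V_gF_+(x,\xi)||\cV_\g(f\vpe)(x,\xi)|\le C_N\langle x-\xi\rangle^{-2N}H(x),
\end{equation}
which is integrable on $\rdd$ for $2N>d$.

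Then I pass to the limit by dominated convergence. Since $\vpe\to 1$ pointwise and $\vp\in\cS(\rd)$, we have $f\vpe\to f$ in $\cS'(\rd)$, so that $\cV_\g(f\vpe)(x,\xi)\to\cV_\g f(x,\xi)$ pointwise on $\rdd$; the majorant above justifies interchanging limit and integral, showing both that the limit in \eqref{eq-virdstar} exists and that it coincides with the claimed integral expression. Independence of $\vp$ is automatic because $\vp$ no longer appears in the limit; independence of $(g,\g)$ follows from the window-change invariance of the dual pairing $\langle F_+,\bar f\rangle_*$ introduced in \eqref{eq dual star}, which identifies the integral as an intrinsic object on $M^{1,\infty}\times W^{\infty,1}$ (recall $F_+\in M^{1,\infty}$ by Lemma \ref{lem:fresnel_function}).

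The main point of potential concern—and the only step genuinely different from the proof of Theorem \ref{thm:fr-int-sjo}—is the uniform domination in Step 2, since $f$ is now a distribution rather than a bounded function. This is handled \emph{not} by re-examining the Lebesgue-integral expression \eqref{eq:tveps} (which in general no longer makes sense), but rather by recognizing \eqref{eq-virdstar} as an already regularized duality pairing, for which the symmetric estimate \eqref{eq:vprod-bound-fsjo} of Lemma \ref{lem:boundH} is tailor-made. No further identification with a classical oscillatory integral is needed, since \eqref{eq-virdstar} is the \emph{definition} of the Fresnel integral in this setting.
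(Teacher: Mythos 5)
Your proposal is correct and follows essentially the same route as the paper's proof: the key step in both is the uniform-in-$\veps$ majorant $C_N\lan x-\xi\ran^{-2N}H(x)$ obtained by combining \eqref{eq:Fp schw decay} of Lemma \ref{lem:fresnel_function} with the Fourier-side bound \eqref{eq:vprod-bound-fsjo} of Lemma \ref{lem:boundH}, followed by dominated convergence using $\cV_\g(f\vpe)\to\cV_\g f$ pointwise since $f\vpe\to f$ in $\cS'(\rd)$. Your additional remarks --- the a priori integrability of the limiting pairing via the $W^{\infty,1}$ norm and the window-independence of $\lan F_+,\bar f\ran_*$ --- are details the paper leaves implicit, but they do not change the argument.
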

\begin{proof} We argue by dominated convergence. First, note that the definition in \eqref{eq-virdstar} is well posed in the sense of Lebesgue integration, since by Lemmas \ref{lem:fresnel_function} and \ref{lem:boundH} we have
\begin{equation}
    |V_g F_+(x,\xi)| |\cV_\g (f\vpe)(x,\xi)| \le C_N \lan x-\xi \ran^{-2N} H(x),  
\end{equation} that is a function in $L^1(\rdd)$ provided that $N$ is sufficiently large. The latter condition and Lemma \ref{lem:boundH} are enough to obtain the claim, after noting that, since $f\vpe \to f$ in the sense of temperate distributions as $\veps \downarrow 0$, we have 
    \[ \cV_g(f\vpe)(x,\xi) \to \cV_gf(x,\xi), \quad \text{for all } (x,\xi) \in \rdd. \qedhere\] 
\end{proof}

\section{Towards the infinite dimensional extension} \label{sez-inf-dim-ext}
Let us  now  study the problem of extending Fresnel integration to the case where $\bR^d$ is replaced with an infinite dimensional space. We already anticipated in the Introduction that an effective approach relies upon the construction of a linear continuous functional $L \colon D(L)\to\bC$ for a suitable domain $D(L)$ of integrable functions. Clearly, this pathway works (and yields an integral with respect to a $\sigma$-additive measure) under the assumption of local compactness of the underlying topological space, which cannot be fulfilled in the case of an infinite dimensional Banach space, hence in this context the notion of linear continuous functional effectively extends the one of integral. 

We generalize below the construction in \cite{AlBr,AlHKMa,ELT} and enlarge the class of infinite dimensional Fresnel integrable functions by extending the results of Theorem \ref{thm:fr-int-sjo} to the case where $\bR^d$ is replaced by the set $\bR^\infty=\bR^\bN$ of real-valued sequences. To this end, we resort to the recent theory of projective system of functionals that has been recalled in Section \ref{sez-psf}. It is clear that $L(f)$ should reduce to a well-defined (finite dimensional) Fresnel integral in the case where the function $f$ depends explicitly only on a finite number of variables, namely if $f$ is a  {\em cylinder function} as defined in Section \ref{sez-cylinder function}.

\subsection{Fresnel integrals as projective system of functionals}

Let us consider now the particular case of Example \ref{example1}, where $A = \bN$, $E_J = \bR^{J}$, $J \in A$, and $E_A=\bR^\bN=\bR^\infty$ is the space of real-valued sequences.

For $m\leq n$, the symbols $\pi^n_m$ and $\pi_n$ denote respectively the projection maps $\pi_m^n \colon \bR^n\to \bR^m$ and $\pi_n \colon \bR^\infty\to \bR^n$. Recalling that $\hat E_n = \{f \colon \bR^n\to\bC\}$ and $\hat E_\infty = \{f \colon \bR^\infty \to\bC\}$, the symbols $\Ep_m^n$ and $\Ep_n$ similarly denote the extension maps $\Ep_m^n \colon\hat E_m \to \hat E_n$ and $\Ep_n \colon \hat E_n \to \hat E_\infty$ defined by
\begin{align*}
    \Ep_m^nf(x)&\coloneqq f(\pi^n_mx), \qquad f\in \hat E_m, \, x\in \bR^n,\\
    \Ep_nf({ x})&\coloneqq f(\pi_{ n}x), \qquad f\in \hat E_n, \, x\in \bR^\infty.
\end{align*}

Let us consider the family of mappings $L_n \colon M^{\infty,1}(\bR^n)\to \bC$, $n \in \bN$, defined by
\begin{align}
   D(L_n) = & M^{\infty,1}(\bR^n)\nonumber \\
   L_n(f) = & \widetilde{\int_{\bR^n}}e^{\hbid\|x\|^2}f(x)dx, \qquad f\in M^{\infty,1}(\bR^n). \label{eq-def-Ln}
\end{align}
It is clear that $L_n$ is a linear continuous functional in view of Theorem \ref{thm:fr-int-sjo} and the following inequality holds for every $g,\gamma \in \cS(\rn)$:
\begin{align}
   \left|\widetilde{\int_{\bR^n}}e^{\hbid |x|^2}f(x) dx \right|& =\left| \frac{1}{\lan \g,g \ran} \int_{\mathbb{R}^{2n}} V_g F_+(x,\xi) \cV_\g f(x,\xi) dxd\xi\right|  \nonumber\\
   &\leq \frac{1}{|\lan \g,g \ran|}\|F_+\|_{M_{(g)}^{1,\infty}}\|f\|_{M_{(\gamma)}^{\infty,1}}.\label{continuity:inequality}
\end{align}
It is also quite easy to check that the family $\{L_n,D(L_n)\}_{n\in \bN}$ is a projective system of functionals, since the requirements in Definition \ref{def-projectivesystemsfunctionals} are satisfied. 

A suitable choice of window functions paves the way for the extensions of the projective system  $\{L_n,D(L_n)\}_{n\in \bN}$. To be precise, fix a sequence $(q_j)_{j \in \bN}$ of positive real numbers and consider the family of windows $g_n\in \cS(\bR^n)$ of the form
\begin{equation}\label{window-g}
    g_n(x)=(2\pi\hbar)^{-n/2}e^{-\frac{1}{2\hbar}\langle x,Q_nx\rangle}, \quad x\in \bR^n,
\end{equation}
where $Q_n$ is a positive definite $n\times n$ diagonal matrix with strictly positive eigenvalues $q_1,\ldots, q_n$. With this choice and reference to notation introduced in Section \ref{sec gabor}, for $m<n$ and $f_m \in M^{\infty,1}(\bR^m)$, the STFT of the extended function $\Ep_m^n f_m$ is explicitly given by:
\begin{align*}
    V_{g_n}\Ep_m^n f_m(x,\xi)&=(2\pi\hbar)^{-n/2}\int_{\bR^n}e^{-\frac{i}{\hbar}\langle y,\xi\rangle}\Ep_m^n f_m(y){g_n}(y-x)dy\\
    &=\left((2\pi\hbar)^{-m/2}\int_{\bR^m}e^{-\frac{i}{\hbar}\sum_{j=1}^my_j\xi_j}f_m(x_1,\cdots,x_m)\frac{e^{-\frac{1}{2\hbar}\sum_{j=1}^mq_j(y_j-x_j)^2}}{(2\pi\hbar)^{m/2}}dy_1...dy_m\right)\\
    &\quad \times \left((2\pi\hbar)^{-(n-m)/2}\int_{\bR^{n-m}}e^{-\frac{i}{\hbar}\sum_{j=1}^{n-m}y_j\xi_j}\frac{e^{-\frac{1}{2\hbar}\sum_{j=m+1}^nq_j(y_j-x_j)^2}}{(2\pi\hbar)^{(n-m)/2}}dy_m...dy_{n-m}\right)\\
    &=V_{g_m}f_m(x_1,...,x_m,\xi_1,...,\xi_m)\frac{e^{-\frac{i}{\hbar}\sum_{j=1}^{n-m}x_j\xi_j}e^{-\frac{1}{2\hbar}\sum_{j=m+1}^nq_j^{-1}\xi_j^2}}{(2\pi\hbar)^{(n-m)/2}(\prod_{j=m+1}^nq_j)^{1/2}}, \qquad x,\xi \in \rn,
\end{align*}
therefore for all $m \le n$:
\begin{align*}
    \|\Ep_m^n f_m\|_{ M_{(g_n)}^{\infty,1}(\bR^n) }&=\int_{\bR^n}\sup_{x\in \bR^n}|V_{g_n}\Ep_m^n f_m(x,\xi)|d\xi\\
    &=\int_{\bR^m}\sup_{x\in \bR^m}|V_{g_m}f_m(x_1,...,x_m,\xi_1,...,\xi_m)|d\xi_1...d\xi_m\\
    &=\| f_m\|_{ M_{(g_m)}^{\infty,1}(\bR^m)}.
\end{align*}
We have thus obtained that the norm of any map $f_m\in M^{\infty,1}(\bR^m)$ coincides with the norm of its extensions $\Ep_m^n f_m$ for all $n\geq m$:
\begin{equation}\label{norm:consistency}
    \| f_m\|_\Mium = \|\Ep_m^n f_m\|_\Miun.
\end{equation}

The previous result allows us to define a norm on the set $\cC_0$ of cylinder functions  
\begin{equation}
    \cC_0\coloneqq\bigcup_n \Ep_n M^{\infty,1}(\bR^n).
\end{equation}
Indeed, given a cylinder function of the form $f=\Ep_nf_n$, for some $f_n\in M^{\infty,1}(\bR^n)$, we define its norm $\|f\|_\Miuinf $ by
\begin{equation}\label{def:norm:cylinder}
    \|f\|_\Miuinf \coloneqq \|f_n\|_\Miun, \qquad f=\Ep_nf_n\,.
\end{equation}
As a matter of fact, the norm $\|f\|_\Miuinf$ is unambiguously defined since it does not depend on the particular representation of the cylinder function $f$. The proof of this claim relies on a general argument that applies to a generic perfect inverse system, and goes as follows. Let $f$ be a cylinder function having two equivalent representations, such as $f=\Ep_mf_m=\Ep_nf_n$ for some $f_m\in E_m=M^{\infty,1}(\bR^m)$ and $f_n\in E_n=M^{\infty,1}(\bR^n)$. For any $N\geq m,n$ we thus have $f=\Ep_N\Ep_m^Nf_m=\Ep_N\Ep_m^Nf_m$, hence $\Ep_N\Ep_m^Nf_m=\Ep_N\Ep_n^Nf_n$ and the surjectivity of the projection $\pi_N$ implies $\Ep_m^Nf_m=\Ep_n^Nf_n$. The consistency property \eqref{norm:consistency} finally gives 
\[ \|f_n\|_\Miun =\|f_m\|_\Mium,\] showing that $\|f\|_\Miuinf $ is unambiguously defined.

Let us also stress that the operator norm of each functional $L_n$ is bounded by means of inequality \eqref{continuity:inequality}. In particular, for every window $\gamma \in \cS(\bR^n)$ we have 
\begin{align}
    \|L_n\|_{(g)}&\coloneqq \sup_{f\in M^{\infty,1}(\bR^n)\smo}\frac{|L_n(f)|}{\|f\|_\Miun}\\
    & = \sup_{f\in M^{\infty,1}(\bR^n)\smo}\frac{\left| \frac{1}{\lan g_n,\g \ran} \int_{\bR^{2n}} V_\gamma F_+(x,\xi) \cV_{g_n} f(x,\xi) dxd\xi\right| }{\|f\|_\Miun }\\
    &\leq \frac{1}{|\lan g_n,\g \ran|}\|F_+\|_{M_{(\gamma)}^{1,\infty}(\bR^n)}.\label{inequality:normL1}
\end{align}
The following result provides the exact value of the operator norm $\|L_n\|_{(g)}$. When the context is clear we omit the dependence on the window $g$ and write $\|L_n\|$ to lighten the notation.
\begin{theorem}\label{th:norm:Ln}
Consider, for any $n\in \bN$, the linear continuous functional $L_n \colon M^{\infty,1}(\bR^n)\to \bC$ defined in \eqref{eq-def-Ln}. We have
\begin{equation}
    \|L_n\|_{(g)}=\prod_{j=1}^n(q_j^2+1)^{1/4} = \inf_{\gamma\in \cS(\bR^n)}\frac{1}{|\lan g_n,\g \ran|}\|F_+\|_{M_{(\gamma)}^{1,\infty}(\bR^n)}.
\end{equation}
\end{theorem}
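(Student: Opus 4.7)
Inequality \eqref{inequality:normL1} already gives $\|L_n\|_{(g)} \le \inf_{\gamma\in \cS(\bR^n)} \frac{1}{|\lan g_n,\gamma\ran|}\|F_+\|_{M^{1,\infty}_{(\gamma)}(\bR^n)}$, so it suffices to establish the two matching sandwich inequalities
\begin{equation}
\inf_{\gamma\in \cS(\bR^n)} \frac{\|F_+\|_{M^{1,\infty}_{(\gamma)}(\bR^n)}}{|\lan g_n,\gamma\ran|} \le \prod_{j=1}^n (q_j^2+1)^{1/4} \le \|L_n\|_{(g)}.
\end{equation}
Both will arise from a single idea: regularizing the (non-Schwartz) Fresnel chirps $e^{\pm i|y|^2/(2\hbar)}$ by a Gaussian damping of vanishing width. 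Concretely, for each $\delta > 0$ I introduce the dual pair of Schwartz functions
\begin{equation}
\gamma_\delta(y) \coloneqq (2\pi\hbar)^{-n/2}\, e^{-\frac{1}{2\hbar}\lan y,\, (\delta I - iI) y\ran}, \qquad f_\delta(x) \coloneqq e^{-\frac{1}{2\hbar}\lan x,\, (\delta I + iI) x\ran}.
\end{equation}

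For the upper bound on the infimum, I plug $\gamma_\delta$ into the STFT computation of $V_{\gamma_\delta} F_+$. Completing the square in the resulting complex Gaussian integral, the chirp in $\gamma_\delta$ is precisely calibrated to cancel the chirp of $F_+$, so that after the dust settles one finds $|V_{\gamma_\delta} F_+(x,\xi)| = (2\pi\hbar)^{-n}\, \delta^{-n/2}\, e^{-|x-\xi|^2/(2\hbar\delta)}$. Integrating in $x$ then yields $\|F_+\|_{M^{1,\infty}_{(\gamma_\delta)}} = (2\pi\hbar)^{-n/2}$, independent of $\delta$. A routine Gaussian inner-product calculation gives $|\lan g_n, \gamma_\delta\ran| = (2\pi\hbar)^{-n/2} \prod_j ((q_j+\delta)^2+1)^{-1/4}$, and the quotient $\prod_j((q_j+\delta)^2+1)^{1/4}$ converges monotonically to $\prod_j(q_j^2+1)^{1/4}$ as $\delta\to 0^+$, giving the first inequality.

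For the lower bound, I use $f_\delta$ as a test function: its chirp $+iI$ precisely cancels the Fresnel phase in \eqref{eq:fresnel int}, so that $L_n(f_\delta)$ reduces to the real Gaussian integral $(2\pi i\hbar)^{-n/2}\int e^{-\delta|x|^2/(2\hbar)}\,dx = i^{-n/2}\delta^{-n/2}$; in particular $|L_n(f_\delta)| = \delta^{-n/2}$. A parallel but more involved complex-Gaussian STFT computation then yields
\begin{equation}
\|f_\delta\|_{M^{\infty,1}_{(g_n)}} = \delta^{-n/2} \prod_{j=1}^n \frac{(\delta(q_j+\delta)+1)^{1/2}}{((q_j+\delta)^2+1)^{1/4}},
\end{equation}
so the $\delta^{-n/2}$ singularities cancel in the quotient and the ratio $|L_n(f_\delta)|/\|f_\delta\|_{M^{\infty,1}_{(g_n)}}$ tends to $\prod_j(q_j^2+1)^{1/4}$ as $\delta\to 0^+$, which closes the sandwich.

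The main obstacle is the STFT calculation of $\|f_\delta\|_{M^{\infty,1}_{(g_n)}}$: after completing the square one must extract the real part of the complex exponent, optimize coordinate-wise over $x$ (the maximum is achieved at $x_j = -\xi_j/[\delta(\delta+q_j)+1]$), and then perform the Gaussian integration in $\xi$. The algebraic identity responsible for the matching of the two limits is, in the $\delta\to 0^+$ limit, $\mathrm{Re}\left[(q_jx_j - i\xi_j)^2/(q_j+i)\right] - q_jx_j^2 = -q_j(x_j+\xi_j)^2/(q_j^2+1)$, which reflects the symplectic shear in phase space induced by multiplication by $e^{i|x|^2/(2\hbar)}$ and is precisely what is needed to saturate the pairing $L_n(f) = \lan F_+, \bar f\ran_*$.
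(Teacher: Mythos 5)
Your proposal is correct and takes essentially the same route as the paper's own proof: the paper's upper bound uses the windows $\gamma(x)=e^{-\frac{\alpha-i}{2\hbar}\|x\|^2}$ (your $\gamma_\delta$ up to a harmless normalization) and its lower bound uses the test functions $f_\veps(x)=e^{-\frac{\veps+i}{2\hbar}\|x\|^2}$ (your $f_\delta$), with the same sandwich argument based on \eqref{inequality:normL1}. Your intermediate formulas, in particular $\|f_\delta\|_{M^{\infty,1}_{(g_n)}}=\delta^{-n/2}\prod_{j=1}^n(\delta(q_j+\delta)+1)^{1/2}((q_j+\delta)^2+1)^{-1/4}$ and the quotient $\prod_{j=1}^n((q_j+\delta)^2+1)^{1/4}$, agree exactly with the paper's computations.
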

\begin{proof}
    Let us consider a parametric family of window functions $\gamma \in \cS(\bR^n)$ of the form
    \[\gamma (x)=e^{-\frac{\alpha -i}{2\hbar}\|x\|^2}\,, \qquad x\in \bR^n\]
    with real $\alpha >0$. By direct computation we get 
    \begin{equation}
        \|F_+\|_{M_{(\gamma)}^{1,\infty}(\bR^n)}=1\,.
    \end{equation}
    Similarly, we obtain
    \begin{equation}
        |\lan g_n,\g \ran|=\prod_{j=1}^n\left((\alpha +q_j)^2+1\right)^{-1/4},
    \end{equation}
    hence
    \begin{equation}
        \frac{\|F_+\|_{M^{1,\infty}_{(\gamma)}(\bR^n)}}{ |\lan g_n,\g \ran|}=\prod_{j=1}^n\left((\alpha +q_j)^2+1\right)^{1/4}.
    \end{equation}
    By virtue of inequality \eqref{inequality:normL1}, we infer that for any $\alpha>0$  the operator norm $\|L_n\|$ satisfies 
    \begin{equation}
        \|L_n\|\leq \prod_{j=1}^n\left((\alpha +q_j)^2+1\right)^{1/4},
    \end{equation}
    and taking the limit for $\alpha \downarrow 0$ eventually yields
    \begin{equation}\label{inequality:normL2}
        \|L_n\|\leq \prod_{j=1}^n(q_j^2+1)^{1/4}.
    \end{equation}
    Conversely, let us consider the family of Schwartz functions defined by $f_\varepsilon (x)=e^{-\frac{\varepsilon+i}{2\hbar}\|x\|^2}$, with $\varepsilon >0$. By direct computation we have
    \begin{equation}
        L(f_\veps)=(2\pi i\hbar )^{-n/2}\irn e^{-\frac{\veps}{2\hbar}\|x\|^2} dx=(i\varepsilon)^{-n/2}.
    \end{equation}
    Moreover, 
    \begin{equation}
        V_{g_n}f_\varepsilon (x,\xi)=\prod_{j=1}^n\frac{e^{-\frac{q_j}{2\hbar}x_j^2}e^{-\frac{(\xi_j+iq_jx_j)}{2\hbar(q_j+\varepsilon+i)^2}}}{\sqrt{2\pi\hbar(q_j+\varepsilon +i)}}\,,
    \end{equation}
    and 
    \begin{equation}
        \|f_\veps\|_\Miun=\prod_{j=1}^n((q_j+\varepsilon)^2+1)^{1/4}\left(\frac{1+\veps(q_j+\veps)}{\veps(q_j^2+2q_j\veps+1+\veps^2)}\right)^{1/2}\,,
    \end{equation}
    hence for any $\veps >0 $ we have
    \begin{equation}
         \|L_n\|\geq \frac{| L(f_\veps)|}{ \|f_\veps\|_{M^{\infty,1}_{(g)}(\bR^n)}}=\prod_{j=1}^n((q_j+\veps)^2+1)^{-1/4}\left(\frac{1+\veps(q_j+\veps)}{q_j^2+2q_j\veps+1+\veps^2}\right)^{-1/2},
    \end{equation}
    and in the limit regime $\veps\downarrow 0$ we obtain
    \begin{equation} \label{inequality:normL3}
        \|L_n\|\geq \prod_{j=1}^n (q_j^2+1)^{1/4}. 
    \end{equation}
    Combining \eqref{inequality:normL2} and \eqref{inequality:normL3} finally gives $ \|L_n\|=\prod_{j=1}^n (q_j^2+1)^{1/4}$. Combining this result with the argument leading to inequality \eqref{inequality:normL2} also proves the characterization in the claim. 
\end{proof}
As a consequence of this result, we note that a suitable choice of eigenvalues of the matrices $Q_n$ appearing in the definition of the windows $g_n$ allows one to ensure that the norms of the functionals $L_n$ are uniformly bounded with respect to $n$. 
\begin{corollary}\label{teo:bounded:operator}
With reference to \eqref{window-g}, if $ \sum_{n \in \bN} q_n^2 <\infty$ then $\displaystyle \sup_{n \in \bN}\|L_n\|<\infty$.
\end{corollary}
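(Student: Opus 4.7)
The plan is to apply Theorem \ref{th:norm:Ln} directly and then use an elementary inequality to bound the resulting infinite product by the convergent series $\sum_n q_n^2$.

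First I would recall from Theorem \ref{th:norm:Ln} that
\begin{equation}
    \|L_n\| = \prod_{j=1}^n (q_j^2+1)^{1/4},
\end{equation}
so the task reduces to showing that the sequence of partial products $\prod_{j=1}^n (1+q_j^2)^{1/4}$ is uniformly bounded in $n$ under the hypothesis $\sum_{j \in \bN} q_j^2 < \infty$.

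Next I would pass to the logarithm. Writing
\begin{equation}
    \log \|L_n\| = \frac{1}{4}\sum_{j=1}^n \log(1+q_j^2),
\end{equation}
and using the elementary inequality $\log(1+x) \leq x$ valid for all $x \geq 0$, I would estimate
\begin{equation}
    \log \|L_n\| \leq \frac{1}{4}\sum_{j=1}^n q_j^2 \leq \frac{1}{4}\sum_{j=1}^\infty q_j^2 < \infty,
\end{equation}
where the last bound is finite precisely by the summability hypothesis on $(q_n)_n$.

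Exponentiating gives the uniform bound
\begin{equation}
    \|L_n\| \leq \exp\!\Big(\tfrac{1}{4}\sum_{j=1}^\infty q_j^2\Big)
\end{equation}
for every $n \in \bN$, and the corollary follows. There is no real obstacle here: the entire content of the statement is packaged in Theorem \ref{th:norm:Ln}, and the remaining step is the standard equivalence between summability of a nonnegative sequence and convergence of the associated infinite product $\prod (1+q_j^2)$.
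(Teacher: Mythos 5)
Your proof is correct and follows essentially the same route as the paper: both invoke Theorem \ref{th:norm:Ln}, take logarithms, and compare $\sum_j \log(1+q_j^2)$ with $\sum_j q_j^2$. The only (cosmetic) difference is that you use the explicit inequality $\log(1+x)\le x$, yielding the clean bound $\|L_n\|\le \exp\bigl(\tfrac14\sum_{j}q_j^2\bigr)$, whereas the paper argues via the asymptotic equivalence $\log(1+x)\sim x$ as $x\downarrow 0$.
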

\begin{proof}
    By Theorem \ref{th:norm:Ln}, for any $n$  we have
   \begin{equation}
       \|L_n\|=\prod_{j=1}^n(q_j^2+1)^{1/4}=\exp \left( \frac{1}{4}\sum_{j=1}^n\log(q_j^2+1)\right). 
   \end{equation}
    Since $\log(1+x) \sim x$ as $x \downarrow 0$, convergence of the series $\sum_{n\in \bN}\log(1+q_n^2)$ is equivalent to convergence of the series $\sum_{n \in \bN} q_n^2$.
\end{proof}

\subsection{Interlude --- applications to the Schr\"odinger equation}\label{sec-schro}
Let us discuss here an interesting byproduct of Theorem \ref{th:norm:Ln}, namely the exact value of the $\Sjo(\rn) \to L^\infty(\rn)$ operator norm for the free particle Schr\"odinger evolution associated with the equation
\[
i\hbar\partial_t u=-\frac{\hbar^2}{2}\Delta u. 
\]
The solution $u(t,x)$, with $t \in \bR\smo$ and $x \in \rn$, such that $u(0,x)=f(x)$ has a standard integral representation:
\[
u(t,x)=e^{\frac{i\hbar t}{2}\Delta}f(x)=\frac{1}{(2\pi i \hbar t)^{n/2}}\int_{\mathbb{R}^n} e^{\frac{i}{\hbar }\frac{|y|^2}{2t}}f(x-y)\, dy.
\] 
Consider again the function $g_n\in \cS(\bR^n)$ given by
\begin{equation}
    g_n(x)=(2\pi\hbar)^{-n/2}e^{-\frac{1}{2\hbar}\langle x,Qx\rangle}, \quad x\in \bR^n
\end{equation}
where $Q$ is the $n\times n$ diagonal matrix with strictly positive eigenvalues $q_1,\ldots, q_n$. 
\begin{theorem}\label{thm-schro}
    We have 
    \[
\sup_{f\in M^{\infty,1}(\mathbb{R}^n)\setminus\{0\}}\frac{\|e^{\frac{i\hbar t}{2}\Delta}f\|_{L^\infty(\mathbb{R}^n)}}{\|f\|_{M^{\infty,1}_{(g_n)}(\mathbb{R}^n)}} = \prod_{j=1}^n(t^2q_j^2+1)^{1/4}.
    \]
\end{theorem}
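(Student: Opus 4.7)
The plan is to recognize the free Schrödinger propagator at a point as a Fresnel functional of the type studied in Theorem \ref{th:norm:Ln}, with the Planck parameter $\hbar$ rescaled to $\hbar'\coloneqq \hbar t$, and then to adjust the window/STFT conventions so that the resulting Fresnel norm coincides with $\|\cdot\|_{M^{\infty,1}_{(g_n)}}$ expressed in the ambient parameter $\hbar$. First I would write
\begin{equation}
    e^{\frac{i\hbar t}{2}\Delta}f(x)=\frac{1}{(2\pi i\hbar t)^{n/2}}\int_{\bR^n}e^{\frac{i}{2\hbar t}|y|^2}f(x-y)\,dy=L_n^{(t)}(\tau_x\check f),
\end{equation}
where $\check f(y)\coloneqq f(-y)$, $\tau_xh(y)\coloneqq h(y-x)$, and $L_n^{(t)}$ denotes the functional \eqref{eq-def-Ln} computed with $\hbar$ replaced by $\hbar'=\hbar t$. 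Since the Gaussian window $g_n$ is even, the $M^{\infty,1}_{(g_n)}$-norm is invariant under both translations and reflections, hence $\|\tau_x\check f\|_{M^{\infty,1}_{(g_n)}}=\|f\|_{M^{\infty,1}_{(g_n)}}$.

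The key observation is the parameter-matching identity: the window $g_n$, which has eigenvalues $q_j$ relative to $\hbar$, can be rewritten as
\begin{equation}
    g_n(x)=(2\pi\hbar)^{-n/2}e^{-\frac{1}{2\hbar}\langle x,Qx\rangle}=t^{n/2}\,(2\pi\hbar')^{-n/2}e^{-\frac{1}{2\hbar'}\langle x,tQx\rangle}=t^{n/2}\,g_n^{*}(x),
\end{equation}
so that $g_n^*$ is precisely the Gaussian window adapted to the parameter $\hbar'$ with rescaled eigenvalues $q_j^*=tq_j$. A direct change of variable $\eta=\xi/t$ in the frequency integral yields
\begin{equation}
    |V_{g_n^*}^{(\hbar')}f(x,\xi)|=t^{-n}\,|V_{g_n}^{(\hbar)}f(x,\xi/t)|,
\end{equation}
and inserting this into \eqref{eq:modsp-norm} the $t$-factors cancel, giving $\|f\|_{M^{\infty,1}_{(g_n^*),\,\hbar'}}=\|f\|_{M^{\infty,1}_{(g_n),\,\hbar}}$. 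Therefore Theorem \ref{th:norm:Ln}, applied in the consistent setting where $\hbar'$ and eigenvalues $q_j^*=tq_j$ are used throughout, yields
\begin{equation}
    \|L_n^{(t)}\|_{M^{\infty,1}_{(g_n),\,\hbar}\to\bC}=\|L_n^{(t)}\|_{M^{\infty,1}_{(g_n^*),\,\hbar'}\to\bC}=\prod_{j=1}^n(t^2q_j^2+1)^{1/4}.
\end{equation}

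Combining these two steps produces the upper bound $\|e^{\frac{i\hbar t}{2}\Delta}f\|_{L^\infty}\le\prod_{j=1}^n(t^2q_j^2+1)^{1/4}\,\|f\|_{M^{\infty,1}_{(g_n)}}$. For the matching lower bound, evaluate at $x=0$: $|e^{\frac{i\hbar t}{2}\Delta}f(0)|=|L_n^{(t)}(\check f)|$, and taking the supremum over $f\neq 0$, using once more that $\|\check f\|_{M^{\infty,1}_{(g_n)}}=\|f\|_{M^{\infty,1}_{(g_n)}}$, we obtain
\begin{equation}
    \sup_{f\neq 0}\frac{\|e^{\frac{i\hbar t}{2}\Delta}f\|_{L^\infty}}{\|f\|_{M^{\infty,1}_{(g_n)}}}\ge\sup_{f\neq 0}\frac{|L_n^{(t)}(\check f)|}{\|\check f\|_{M^{\infty,1}_{(g_n)}}}=\|L_n^{(t)}\|=\prod_{j=1}^n(t^2q_j^2+1)^{1/4}.
\end{equation}

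The main obstacle is the careful bookkeeping of the two Planck-type scales $\hbar$ and $\hbar t$: one must verify that the normalizations of $g_n^*$ and the Jacobian of the frequency rescaling $\xi=t\eta$ conspire to give an \emph{exact} equality of the two $M^{\infty,1}$-norms (and not merely equivalence up to a constant), otherwise the sharp constant would be spoiled. Once this consistency check is performed, the theorem reduces transparently to Theorem \ref{th:norm:Ln} via the translation-and-reflection trick, and no further analysis is needed.
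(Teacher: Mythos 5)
Your proposal is correct and follows essentially the same route as the paper: rewrite the propagator at a point as a Fresnel functional with $\hbar$ replaced by $\hbar t$ and $Q$ by $tQ$, invoke Theorem \ref{th:norm:Ln} in that rescaled setting, and use translation/reflection invariance of the $M^{\infty,1}_{(g_n)}$-norm. Your explicit verification that $g_n=t^{n/2}g_n^*$ and that the frequency rescaling $\xi=t\eta$ gives exact equality $\|f\|_{M^{\infty,1}_{(g_n^*)},\,\hbar'}=\|f\|_{M^{\infty,1}_{(g_n)},\,\hbar}$ is precisely the "similar adjustment\dots in the short-time Fourier transform" that the paper's proof mentions only in passing, so you have in fact filled in its tersest step.
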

 \begin{proof}
First of all we observe that, if $f\in M^{\infty,1}(\mathbb{R}^n)$ then $e^{\frac{i\hbar}{2}\Delta}f \in M^{\infty,1}(\rn)$ since the Schr\"odinger propagator is a Fourier multiplier with symbol in $W^{1,\infty}(\rn)$ (hence bounded on every modulation space --- see e.g., \cite[Proposition 3.6.9]{NT_book})), and thus a bounded continuous function on $\rn$. Therefore, $\|e^{\frac{i\hbar t}{2}\Delta}f\|_{L^\infty(\mathbb{R}^n)}= \sup_{x\in \mathbb{R}^n}|e^{\frac{i\hbar t}{2}\Delta}f(x)|$, and 
 \[
\sup_{f\in M^{\infty,1}(\mathbb{R}^n)\setminus\{0\}}\frac{\|e^{\frac{i\hbar t}{2}\Delta}f\|_{L^\infty(\mathbb{R}^n)}}{\|f\|_{M^{\infty,1}_{(g_n)}(\mathbb{R}^n)}}= \sup_{x\in\mathbb{R}^n}
\sup_{f\in M^{\infty,1}(\mathbb{R}^n)\setminus\{0\}}\frac{|e^{\frac{i\hbar t}{2}\Delta}f(x)|}{\|f\|_{M^{\infty,1}_{(g_n)}(\mathbb{R}^n)}}.
 \]
Assuming without loss of generality $t>0$, we can write 
\begin{equation}\label{eq schr}
e^{\frac{i\hbar t}{2}\Delta}f(x)= \widetilde{\irn}  e^{\frac{i}{\hbar }\frac{|y|^2}{2t}}f(x-y)\, dy,
\end{equation}
where in the Fresnel integral $\widetilde{\irn}$ (along with its normalization constant) we now have $\hbar t$ in place of $\hbar$. We can compute the supremum over $f$ by means of Theorem \ref{th:norm:Ln}, where the role of $\hbar$ is now played by $\hbar t$ and $Q$ is replaced by $tQ$ --- a similar adjustment is also needed in the short-time Fourier transform, which is implicitly involved in $\|f\|_{M^{\infty,1}_{(g_n)}(\mathbb{R}^n)}$. Combining Theorem \ref{th:norm:Ln} with the invariance of the norm $\|\cdot\|_{M^{\infty,1}_{(g_n)}(\mathbb{R}^n)}$ under translations and reflections, we infer that, for every $x\in\mathbb{R}^n$,
\[
\sup_{f\in M^{\infty,1}(\mathbb{R}^n)\setminus\{0\}}\frac{|e^{\frac{i\hbar t}{2}\Delta}f(x)|}{\|f\|_{M^{\infty,1}_{(g_n)}(\mathbb{R}^n)}}= \prod_{j=1}^n(t^2q_j^2+1)^{1/4},
\]
that is the claim.
  \end{proof}

\section{Infinite dimensional Fresnel integrals as projective systems of functionals --- topological approach}\label{sez-proj-ext-top}
The present section is devoted to the construction and the study of the properties of a projective extension $(L,D(L))$ (in the sense of Definition \ref{def-proj-syst}) of the projective system of functionals  $\{L_n,D(L_n)\}_{n\in \bN}$ defined in Section \ref{sez-inf-dim-ext}. In particular, the construction presented below relies on Theorem \ref{th:norm:Ln}, Corollary \ref{teo:bounded:operator} and a continuity argument.

Let us fix a sequence $(q_n)_{n \in \bN}$ of positive real numbers such that $\sum_n q_n^2 <\infty$, and consider the corresponding family of diagonal matrices $Q_n \colon \bR^n\to\bR^n$ with eigenvalues $q_1,\dots, q_n$. In fact, for each $n$ the operator $Q_n$ can be regarded as the restriction to $\bR^n$ of the  Hilbert-Schmidt operator $Q \colon \ell^2\to \ell^2$ defined on the Hilbert space $\ell^2(\bN)$ of square-integrable real-valued sequences $x = (x_n)\in \ell^2$ by $(Qx)_n\coloneqq q_nx_n$, $n \in \bN$. Consider now the projective system of functionals $\{L_n,D(L_n)\}_{n\in \bN}$ given by
\begin{equation}
    D(L_n)=\left(M^{\infty,1}(\bR^n), \| \cdot \|_\Miun\right), \qquad L_n(f)=\widetilde{\int_{\bR^n}}e^{\hbid |x|^2}f(x) dx,
\end{equation}
and its minimal extension $(\Lmin,D(\Lmin))$ defined as follows:
\begin{align}
    &D(\Lmin)=\cC_0 = \bigcup_n \Ep_n M^{\infty,1}(\bR^n),\label{domain:min:ex}\\
    &\Lmin f =L_n(f_n), \quad \text{where } f= \Ep_n f_n \text{ and } f_n\in M^{\infty,1}(\bR^n). \label{op:min:ex}
\end{align} 

We now construct and study a non-trivial extension $(L,D(L))$ of $(\Lmin,D(\Lmin))$. First of all, let us define the domain  $D(L)$ as the closure $D(L)\coloneqq \overline{\cC_0} $ of the set of cylinder functions in the norm $\| \cdot \|_\Miuinf$ defined in \eqref{def:norm:cylinder}. In other words, an element $f\in D(L) $ is associated with an equivalence class of Cauchy sequences of cylinder functions, say $(f_n)_{n \in \bN}$, and its norm $\|f\|_\Miuinf$ is given by the limit of the norms of the functions in the approximating sequence:
    \begin{equation}
        \|f\|_\Miuinf = \lim_{n\to \infty} \|f_n\|_\Miun.
    \end{equation}
The value of the limit on the right-hand side does not actually depend on the particular choice of the representative in the equivalence class of sequences associated with $f$. 

The image $L(f)$ of a function $f\in D(L)$  is defined, in the same spirit, via the limit
    \begin{equation}L(f)\coloneqq \lim_{n\to\infty} L(f_n), \label{def:op:cont:ex}
    \end{equation}
where $L(f_n)$ is given by \eqref{op:min:ex} for $f_n\in\cC_0$. 
The limit in \eqref{def:op:cont:ex} exists and is finite in view of Theorem \ref{teo:bounded:operator}, which accounts for the boundedness (hence the continuity) of the operator $L$ on the normed space $(D(L),\| \cdot \|_\Miuinf)$. The same result shows that the right-hand side of \eqref{def:op:cont:ex} does not depend on the choice of the representative $(f_n)_{n \in \bN}$ in the equivalence class of $f\in D(L)$. 

We have thus defined a linear continuous functional $(L,D(L))$ that extends $(\Lmin,D(\Lmin))$. Before proceeding with the analysis of its properties, it is worthwhile to examine an explicit example to better understand the construction. 
 
\begin{example}\label{examplef-nCauchy}
   A special instance of an element of $D(L)$, i.e., a Cauchy sequence $(f_n)_{n \in \bN} \subset \cC_0$ in  the norm $\| \cdot \|_\Miuinf $ can be constructed as follows:
   \begin{equation} f_n(x)\coloneqq \prod_{j=1}^nh_j(x_j)\, ,\qquad x\in \bR^\infty,\end{equation}
   where the functions $h_j \colon \bR\to \bC$, $j\in \bN$, are in $M^{\infty,1}(\bR)$ and $h_n \to 1$ in $M^{\infty,1}(\bR)$. To be concrete, let us consider the case where $h_j$ is defined by
   \begin{equation} h_j(x)=1+a_je_j(x), \qquad x\in \bR, \end{equation}
   where $e_j\in M^{\infty,1}(\bR)$ and $(a_n)_{n \in \bN}\subset \bR_+$ is a sequence of positive reals such that $a_n \downarrow 0$. Setting 
   \begin{equation} \tilde g_j(u)\coloneqq \frac{e^{-\frac{q_j}{2\hbar}u^2}}{\sqrt{2\pi\hbar}},\qquad u\in \bR, \end{equation}
   the norm of the cylinder function $f_n$, $n\in \bN$, is bounded as follows:
   \begin{align*}
       \|f_n\|_\Miuinf &=\prod_{j=1}^n \|h_j\|_{M_{(\tilde g_j)}^{\infty,1}(\bR)}\\
       &\leq \prod_{j=1}^n\Big(1+ a_j\|e_j\|_{M_{(\tilde g_j)}^{\infty,1}(\bR)}\Big).
   \end{align*}
   Therefore, a sufficient condition to ensure that $\sup_n \|f_n\|_\Miuinf<\infty$ is the convergence of the series $\sum_j a_j\|e_j\|_{M_{(\tilde g_j)}^{\infty,1}(\bR)}$ --- which holds, for instance, if $\sup_n\|e_n\|_{M_{(\tilde g_j)}^{\infty,1}(\bR)}<\infty$ and $\sum_na_n<\infty$. 
   
   Let us now compute $\|f_n-f_m\|_\Miuinf $, with $m<n$. To this aim, consider the sequence of functions $\tilde f_n \colon \bR^n\to \bC$ such that $f_n=\Ep_n\tilde f_n$. Then
   \[ \|f_n-f_m\|_\Miuinf =\|\tilde f_n-\Ep_m^n\tilde f_m\|_\Miun. \] More precisely, we have
   \begin{multline*}
       V_{g_n}\tilde f_n(x,\xi)-  V_{g_n}\Ep_m^n\tilde f_m(x,\xi)=V_{g_m}\tilde f_m(x_1,...,x_m,\xi_1,..., \xi_m)\\
       \times \left(\prod_{j=m+1}^nV_{\tilde g_j}h_j(x_j,\xi_j)-\prod_{j=m+1}^nV_{\tilde g_j}1(x_j,\xi_j)\right).
   \end{multline*}
   Recalling that $h_j(x)=1+a_je_j(x)$, we infer
   \begin{align}
       V_{g_n}\tilde f_n(x,\xi) -  V_{g_n}\Ep_m^n\tilde f_m(x,\xi)  & \begin{multlined}[t] =V_{g_m}\tilde f_m(x_1,...,x_m,\xi_1,..., \xi_m) \\
       \times \left(\prod_{j=m+1}^n(V_{\tilde g_j}1(x_j,\xi_j)+a_jV_{\tilde g_j}e_j(x_j,\xi_j))-\prod_{j=m+1}^nV_{\tilde g_j}1(x_j,\xi_j)\right) \end{multlined} \\
       & \begin{multlined}[t] =V_{g_m}\tilde f_m(x_1,...,x_m,\xi_1,..., \xi_m)\\ \hspace{-3.5cm}
       \times \sum_{k=1}^{n-m} \,\, \sum_{1\leq j_1<\dots <j_k\leq n-n} \,\, \prod_{l=1}^{k}a_{m+j_l}V_{\tilde g_{m+j_l}}e_{m+j_l}(x_{m+j_l},\xi_{m+j_l})\prod_{\substack{i=1 \\ i\neq j_1,\dots,j_k}}^{n-m}V_{\tilde g_{m+i}}1(x_{m+i},\xi_{m+i}). \end{multlined}
   \end{align}
   Noting that $\|1\|_{M_{(g)}^{\infty,1}(\bR)}=1$, we obtain
    \begin{align*}
    \|\tilde f_n-\Ep_m^n\tilde f_m\|_\Miun &\leq \|\tilde f_m\|_\Mium \sum_{k=1}^{n-m} \,\, \sum_{1\leq j_1<\dots <j_k\leq n-m}\,\, \prod_{l=1}^{k}a_{m+j_l}\|e_{m+j_l}\|_{M_{(\tilde g_{m+j_l})}^{\infty,1}(\bR)}\\
    &=\|\tilde f_m\|_\Mium\left(\prod_{j=m+1}^n(1+a_j\|e_j\|_{M_{(\tilde g_{j})}^{\infty,1}(\bR)})-1\right)\,,
       \end{align*}
       and the right-hand side vanishes as $m\to\infty$ if $\sum_ja_j\|e_j\|_{M_{(\tilde g_{j})}^{\infty,1}(\bR)}<+\infty$. 
       
By direct calculation, $L(f)$ is given by
\begin{equation}\label{OperatorLexamplef-nCauchy}
    L(f)=\lim_{n\to \infty}L_n(\tilde f_n)=\prod_{j\geq 1}(1+a_j L_1(e_j))\,.
\end{equation}
In the special case where $e_j(x)=e^{ik_jx}$ we have $\|e_j\|_{M_{(\tilde g_{j})}^{\infty,1}(\bR)}=1$, and  the following identities hold:
       \begin{equation*}
           \|f_n\|=\prod_{j=1}^n(1+a_j), \qquad L_n(f_n)=\prod_{j=1}^n(1+a_je^{-\frac{i\hbar}{2}k_j}).
       \end{equation*}
       Moreover, one can directly prove that the limit of the Cauchy sequence $(f_n)_{n \in \bN}$ is given by the map $f \colon \bR^\infty \to \bC$,  $f(x)=\prod_{j\geq 1}h_j(x_j)$. Indeed, for any $m$ the norm of the function $f-f_m$ coincides with the limit of the norms of an approximating sequence of cylinder functions, hence:
       \begin{align*}
           \|f-f_m\|_\Miuinf &=\lim_{\substack{n\to \infty \\ n\geq m}}\|f_n-f_m\|_\Miuinf\\
           &=   \|f_m\|_\Mium \lim_{\substack{n\to \infty \\ n\geq m}} \left(\prod_{j=m+1}^n(1+a_j\|e_j\|_{M_{(\tilde g_{j})}^{\infty,1}(\bR)})-1\right)\\
           &=   \|f_m\|_\Mium\left(\prod_{j>m}(1+a_j\|e_j\|_{M_{(\tilde g_{j})}^{\infty,1}(\bR)})-1\right)\\
           &\leq \prod_{j=1}^m\left(1+ a_j\|e_j\|_{M_{(\tilde g_{j})}^{\infty,1}(\bR)}\right)\left(\prod_{j>m}(1+a_j\|e_j\|_{M_{(\tilde g_{j})}^{\infty,1}(\bR)})-1\right)\,.
       \end{align*}
       Taking the limit for $m\to \infty$ makes the term $\prod_{j=1}^m\left(1+ a_j\|e_j\|_{M_{g_j}^{\infty,1}(\bR)}\right)$ to converge to a finite value, while $\lim_{m\to \infty}\left(\prod_{j>m}(1+a_j\|e_j\|)-1\right)$ vanishes.
   \end{example}
  \subsection{Characterization of the domain $D(L)$}
  We now focus on the description of the properties of the elements $f\in D(L)$ obtained as limit of Cauchy sequences of cylinder functions $(f_n)_{n\in\bN}$ in the norm $\| \cdot \|_\Miuinf$. We show that $f$ can be actually obtained as the \textit{pointwise} limit of the sequence of the cylinder functions. To this aim, we need to introduce a preliminary result relating the norms of restrictions on $M^{\infty,1}$. 

\begin{proposition}\label{Prop-comparison-norm}
    For $1\leq m<n$, consider the splitting $\bR^n=\bR^{m}\times \bR^{n-m}$, with coordinates $x=(x',x'')$, $x'\in\bR^m$, $x''\in\bR^{n-m}$.
Consider any window $g'\in \mathcal{S}(\bR^m)\setminus\{0\}$ and the window $g''\in \mathcal{S}(\bR^{n-m})$ given by
\begin{equation}
     g''(x'')=(2\pi\hbar)^{-(n-m)/2}e^{-\frac{1}{2\hbar}\langle x'',Q x''\rangle}
\end{equation}
where $Q$ is the $(n-m)\times (n-m)$ diagonal matrix with strictly positive eigenvalues $q_{ m+1},\ldots, q_{ n}$. 

Then, regarding $\bR^m$ as a subspace of $\bR^n$ via the inclusion $x'\mapsto (x',0)$, for every $f\in M^{\infty,1}(\bR^n)$ we have $f|_{\bR^{m}}\in M^{\infty,1}(\bR^m)$ with 
\begin{equation}\label{eq formula uno}
\|f|_{\bR^{m}}\|_{M^{\infty,1}_{(g')}(\bR^m)}\leq \|f\|_{M^{\infty,1}_{(g'\otimes g'')}(\bR^n)}.
\end{equation}
\end{proposition}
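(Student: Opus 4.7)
My plan is to reduce the inequality to a phase-space identity relating the STFT of the restriction $f|_{\bR^m}$ to the full STFT of $f$ sliced at $x''=0$. Concretely, I aim to establish the pointwise identity
\[
V_{g'}(f|_{\bR^m})(x',\xi') \;=\; \int_{\bR^{n-m}}V_{g'\otimes g''}f(x',0,\xi',\xi'')\,d\xi'', \qquad (x',\xi')\in\bR^{2m}.
\]
Granting this identity, the bound \eqref{eq formula uno} is immediate: pulling the modulus under the $\xi''$ integral and using the trivial estimate $|V_{g'\otimes g''}f(x',0,\xi',\xi'')|\le\sup_{x\in\bR^n}|V_{g'\otimes g''}f(x,\xi',\xi'')|$ yield
\[
\sup_{x'\in\bR^m}|V_{g'}(f|_{\bR^m})(x',\xi')| \;\le\; \int_{\bR^{n-m}}\sup_{x\in\bR^n}|V_{g'\otimes g''}f(x,\xi',\xi'')|\,d\xi'',
\]
and a further integration over $\xi'\in\bR^m$ gives exactly \eqref{eq formula uno}, in the process showing that $f|_{\bR^m}\in M^{\infty,1}(\bR^m)$.

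\textbf{Proving the identity for Schwartz $f$.} For $f\in\cS(\bR^n)$, I would exploit the tensor product structure of the window. A direct Fubini computation produces the factorization
\[
V_{g'\otimes g''}f(x,\xi) \;=\; (2\pi\hbar)^{-m/2}\int_{\bR^m}e^{-\frac{i}{\hbar}\xi'\cdot y'}\,\overline{g'(y'-x')}\,V_{g''}[f(y',\cdot)](x'',\xi'')\,dy'.
\]
Setting $x''=0$ and integrating over $\xi''$, I would invoke the auxiliary identity
\[
\int_{\bR^{n-m}}V_{g''}h(0,\xi'')\,d\xi'' \;=\; (2\pi\hbar)^{(n-m)/2}\,\overline{g''(0)}\,h(0) \;=\; h(0),
\]
where the first equality follows from $V_{g''}h(0,\cdot)=\cF(h\,\overline{g''})$ and Fourier inversion at the origin, and the second rests on the specific normalization $g''(0)=(2\pi\hbar)^{-(n-m)/2}$ built into the Gaussian window. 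Applied with $h=f(y',\cdot)$, this collapses the inner integral to $f(y',0)$ and produces exactly $V_{g'}(f|_{\bR^m})(x',\xi')$.

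\textbf{Extension to $f\in M^{\infty,1}(\bR^n)$.} The final step is to upgrade the identity to the full Sj\"ostrand class. I would invoke the density of $\cS(\bR^n)$ in $M^{\infty,1}(\bR^n)$ with respect to narrow convergence recalled in Section \ref{sec-sjo}, selecting a sequence $(f_k)\subset\cS(\bR^n)$ with $f_k\to f$ narrowly. Narrow convergence delivers the pointwise limits $f_k(y',0)\to f(y',0)$ and $V_{g'\otimes g''}f_k(x,\xi)\to V_{g'\otimes g''}f(x,\xi)$, together with an integrable envelope $\sup_x|V_{g'\otimes g''}f_k(x,\cdot)|\le H\in L^1(\bR^n)$ independent of $k$. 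Two applications of dominated convergence then pass the identity (already valid for each $f_k$) to the limit on both sides, thereby establishing the desired pointwise identity for $f$.

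\textbf{Main obstacle.} The main technical subtlety lies in justifying the swap of integrations in the auxiliary identity for $V_{g''}h(0,\xi'')$ when $h=f(y',\cdot)$ is merely continuous and bounded rather than Schwartz; this is precisely why I route the argument through narrow Schwartz approximation instead of attempting a direct Fubini computation on $f\in M^{\infty,1}(\bR^n)$. The specific normalization of the Gaussian $g''$ is indispensable throughout: without the precise value $g''(0)=(2\pi\hbar)^{-(n-m)/2}$, the constant $(2\pi\hbar)^{(n-m)/2}\overline{g''(0)}$ would contaminate the identity and spoil the clean inequality asserted in \eqref{eq formula uno}.
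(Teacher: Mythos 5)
Your proof is correct, and it takes a genuinely different route from the paper's. The paper does not use your slice identity; it instead represents the restriction by smearing the partial STFT against an auxiliary window $\gamma\in\cS(\bR^{n-m})$ with $\langle\gamma,g''\rangle\neq 0$,
\begin{equation}
f(x',0)=\frac{(2\pi\hbar)^{-(n-m)/2}}{\langle \gamma, g''\rangle}\int_{\bR^{2(n-m)}} V_{g''}f(x';x'',\omega'')\,\gamma(-x'')\, dx''\,d\omega'',
\end{equation}
then applies $V_{g'}$ and obtains \eqref{eq formula uno} with the extra constant $(2\pi\hbar)^{-(n-m)/2}\|\gamma\|_{L^1}/|\langle\gamma,g''\rangle|$; the sharp constant $1$ is only recovered afterwards, by choosing $\gamma=\gamma_\lambda$ a family of Gaussians and letting $\lambda\to+\infty$. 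Your argument reaches the constant $1$ in one stroke: evaluating at $x''=0$ and integrating in $\xi''$ is Fourier inversion at the origin, and the only feature of $g''$ that enters is the value $g''(0)=(2\pi\hbar)^{-(n-m)/2}$ --- in particular the Gaussian shape of $g''$ is irrelevant to your proof, a mild strengthening of the statement, whereas the paper's optimization step relies on the explicit Gaussian computation of $\|\gamma_\lambda\|_{L^1}/|\langle\gamma_\lambda,g''\rangle|$. Your extension step is also more robust than the paper's: the paper asserts that $f_k|_{\bR^m}$ is norm-Cauchy in $M^{\infty,1}(\bR^m)$ by applying \eqref{eq formula uno} to differences $f_j-f_k$, which presupposes that the narrowly approximating Schwartz sequence is norm-Cauchy in $M^{\infty,1}(\bR^n)$ (narrow convergence does not provide this), while your dominated-convergence passage uses only what narrow convergence does provide --- the uniform $L^1$ envelope and pointwise convergence of the STFTs, together with the uniform bound $\sup_k\|f_k\|_{L^\infty}\lesssim \sup_k\|f_k\|_{M^{\infty,1}}<\infty$ needed on the left-hand side, which you should state explicitly. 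One small repair: the envelope $H$ controls $\sup_x|V_{g'\otimes g''}f_k(x,\xi)|$ only for a.e.\ $\xi\in\bR^n$, so after Fubini your limiting identity is guaranteed for every $x'$ but only for a.e.\ $\xi'$; this is harmless, since \eqref{eq formula uno} follows by integrating in $\xi'$, but the claim of a pointwise identity valid everywhere for general $f\in M^{\infty,1}(\bR^n)$ should be weakened accordingly.
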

\begin{proof}
First, we observe that if $f\in M^{\infty,1}(\bR^n)$ then $f$ is continuous, and its restriction $f\mapsto f|_{\bR^m}$ is therefore well-defined pointwise. We also highlight that it suffices to prove \eqref{eq formula uno} for $f\in \cS(\bR^n)$. Indeed, if $f\in M^{\infty,1}(\rn)$ and $f_k\in\cS(\bR^n)$, with $f_k\to f$ in the sense of narrow convergence, then $f_k\to f$ pointwise and  $\|f_k\|_{M^{\infty,1}_{(g'\otimes g'')}}\to \|f\|_{M^{\infty,1}_{(g'\otimes g'')}}$ (by dominated convergence). Moreover, $f_k|_{\bR^m}$ is a Cauchy sequence in $M^{\infty,1}(\bR^m)$ in light of \eqref{eq formula uno} (applied to $f_n-f_m$), hence converges --- necessarily to $f|_{\bR^m}$ because convergence in $M^{\infty,1}$ implies pointwise convergence (see Section \ref{sec-sjo} for further details). Therefore, we assume $f\in\cS(\bR^n)$ hereinafter.

Let $\gamma\in\cS(\bR^{n-m})$, with $\langle \gamma, g''\rangle\not=0$. By Fubini's theorem and the Fourier inversion formula, we can represent the restriction $f|_{\bR^m}$ as 
\[
f(x',0)=\frac{(2\pi \hbar)^{-(n-m)/2}}{\langle \gamma, g''\rangle}\int_{\bR^{2(n-m)}} V_{g''}f(x';x'',\omega'')\gamma(-x'')\, dx''\,d\omega''.
\] 
Taking the short-time Fourier transform $V_{g'}$ and using Fubini's theorem twice, we obtain 
\[
V_{g'}f|_{\bR^m}(x',\omega')= \frac{(2\pi \hbar)^{-(n-m)/2}}{\langle \gamma, g''\rangle} \int_{\bR^{2n}} V_{g'\otimes g''} f(x',x'',\omega',\omega'')\gamma(-x'')\, dx'\, dx''\, d\omega'\, d\omega''. 
\]
It follows that 
\[
\|f|_{\bR^m}\|_{M^{\infty,1}_{(g')}(\bR^m)}\leq \frac{(2\pi \hbar)^{-(n-m)/2}\|\gamma\|_{L^1}}{|\langle \gamma, g''\rangle|}\|f\|_{M^{\infty,1}_{(g'\otimes g'')}(\bR^n)}.
\]
Consider now
\[\gamma(x'')=\gamma_\lambda(x'')\coloneqq  (2\pi\hbar\lambda^{-1})^{-(n-m)/2}e^{-\frac{1}{2\hbar}\lambda |x''|^2}, \quad\lambda>0.
\]
Explicit calculations show that 
\[
\frac{(2\pi \hbar)^{(n-m)/2}\|\gamma_\lambda\|_{L^1}}{|\langle \gamma_\lambda, g''\rangle|}=\prod_{j=1}^{n-m} \big(\frac{\lambda+q_j}{\lambda}\big)^{1/2},
\]
hence we infer
\[
\|f|_{\bR^m}\|_{M^{\infty,1}_{(g')}(\bR^m)}\leq \Big(\prod_{j=1}^{n-m} \frac{\lambda+q_j}{\lambda}\Big)^{1/2}\|f\|_{M^{\infty,1}_{(g'\otimes g'')}(\bR^n)}.
\]
Letting $\lambda\to+\infty$ then yields \eqref{eq formula uno}.  
\end{proof}
\begin{remark}
 Formula \eqref{eq formula uno} shows that the norm of the restriction operator, as a map $M^{\infty,1}(\bR^n)\to  M^{\infty,1}(\bR^m)$, with windows $g'\otimes g''$ and $g'$ respectively, is not larger than $1$. Taking $f=1$ shows that the norm is actually $1$. We recall that several trace theorems have been proved for modulation spaces, see for instance \cite{fei_11} and the references therein. Nevertheless, we stress that the previous result comes with the best constant in the bound, and this is the most relevant aspect for our purposes. 
\end{remark}
The case $m=0$ in Proposition \ref{Prop-comparison-norm} (more precisely, the same proof, with obvious adjustments), along with the translation invariance of the $M^{\infty,1}$ norm, implies the following result.
\begin{lemma}\label{lemma-in-norms}
    For the window $g_n$ defined in \eqref{window-g}, the following inequality holds for any $f\in M^{\infty,1}(\bR^n)$:
    \begin{equation}
        \|f\|_{L^\infty}\leq \|f\|_\Miun. 
    \end{equation}
\end{lemma}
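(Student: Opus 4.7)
The plan is to mimic the proof of Proposition \ref{Prop-comparison-norm} in the degenerate case $m=0$, where the ``restriction to $\bR^m$'' collapses to the pointwise evaluation $f\mapsto f(0)$, and then upgrade the resulting bound on $|f(0)|$ to a pointwise bound via translation invariance.

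First I would argue by density, reducing to the case $f\in\cS(\bR^n)$: if $f_k\in\cS(\bR^n)$ converges narrowly to $f\in M^{\infty,1}(\bR^n)$ then $f_k\to f$ pointwise and $\|f_k\|_{M^{\infty,1}_{(g_n)}}\to\|f\|_{M^{\infty,1}_{(g_n)}}$ by dominated convergence, so the desired inequality passes to the limit. For $f\in\cS(\bR^n)$, I would write the pointwise inversion formula
\begin{equation}
f(0)=\frac{(2\pi\hbar)^{-n/2}}{\langle\gamma,g_n\rangle}\int_{\bR^{2n}}V_{g_n}f(x,\omega)\,\gamma(-x)\,dx\,d\omega,
\end{equation}
valid for any $\gamma\in\cS(\bR^n)$ with $\langle\gamma,g_n\rangle\neq 0$, which is exactly the $m=0$ instance of the representation used in the proof of Proposition \ref{Prop-comparison-norm}.

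Taking absolute values and estimating the inner integral by $\|\gamma\|_{L^1}\sup_x|V_{g_n}f(x,\omega)|$, and then integrating in $\omega$, yields
\begin{equation}
|f(0)|\leq\frac{(2\pi\hbar)^{-n/2}\|\gamma\|_{L^1}}{|\langle\gamma,g_n\rangle|}\,\|f\|_{M^{\infty,1}_{(g_n)}(\bR^n)}.
\end{equation}
Specializing to the family $\gamma_\lambda(x)=(2\pi\hbar\lambda^{-1})^{-n/2}e^{-\frac{\lambda}{2\hbar}|x|^2}$, the same Gaussian bookkeeping as in Proposition \ref{Prop-comparison-norm} gives
\begin{equation}
\frac{(2\pi\hbar)^{-n/2}\|\gamma_\lambda\|_{L^1}}{|\langle\gamma_\lambda,g_n\rangle|}=\prod_{j=1}^{n}\Big(\frac{\lambda+q_j}{\lambda}\Big)^{1/2},
\end{equation}
which tends to $1$ as $\lambda\to+\infty$. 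Letting $\lambda\to+\infty$ therefore yields $|f(0)|\leq\|f\|_{M^{\infty,1}_{(g_n)}(\bR^n)}$.

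Finally, I would upgrade this pointwise bound at the origin to a bound at arbitrary $y\in\bR^n$ using the translation invariance of the modulation norm, namely $\|T_{-y}f\|_{M^{\infty,1}_{(g_n)}}=\|f\|_{M^{\infty,1}_{(g_n)}}$ (which follows from $V_{g_n}(T_{-y}f)(x,\omega)=e^{\frac{i}{\hbar}\omega\cdot y}V_{g_n}f(x+y,\omega)$ and taking absolute values). Applying the previous inequality to $T_{-y}f$ gives $|f(y)|=|T_{-y}f(0)|\leq\|f\|_{M^{\infty,1}_{(g_n)}}$, and taking the supremum over $y\in\bR^n$ concludes the proof. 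There is no real obstacle here: the computation is essentially the $m=0$ shadow of Proposition \ref{Prop-comparison-norm}, the only mildly delicate points being the density reduction to $\cS(\bR^n)$ and the explicit optimization in $\lambda$ that produces the sharp constant $1$.
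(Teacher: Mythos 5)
Your proof is correct and takes essentially the same route as the paper: the paper proves this lemma precisely by invoking the case $m=0$ of Proposition \ref{Prop-comparison-norm} (the same inversion-formula argument, the same Gaussian windows $\gamma_\lambda$ with the limit $\lambda\to+\infty$) combined with the translation invariance of the $M^{\infty,1}_{(g_n)}$ norm. You have simply written out explicitly the ``obvious adjustments'' that the paper leaves to the reader, and your bookkeeping (including the constant $\prod_{j=1}^{n}\bigl(\tfrac{\lambda+q_j}{\lambda}\bigr)^{1/2}$ and the factor $(2\pi\hbar)^{-n/2}$) is accurate.
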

   
   Let us consider now $f\in D(L)=\overline{\cC_0}$. By construction, $f$ is an equivalence class of Cauchy sequences of cylinder functions $(f_n)_{n\in\bN}$ with respect to the norm $\| \cdot \|_\Miuinf$. Given $k\in \bN$, let us consider the subset $P_k\bR^\infty$ of $\bR^\infty$ of real sequences $(x_n)_n$ such that $x_m=0$ for all $m>k$, and let  $f_m^{(k)} \colon \bR^k\to\bC$ denote the map related to the restriction of  $f_m$  to the finite-dimensional subspace $P_k\bR^\infty$, namely
\[ f_m^{(k)}(x_1,\ldots, x_k)\coloneqq f_m(x_1,\ldots, x_k,0,\ldots).\]

\begin{theorem} \label{thm-indep-rep} For all $k\in \bN$  the sequence ${f_n^{(k)}}$ is Cauchy in $M^{\infty,1}(\bR^k)$. Its limit, denoted by $f^{(k)}$, does not depend on the representative of $f$.
\end{theorem}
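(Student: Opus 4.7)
The plan is to reduce both claims to a single estimate derived from Proposition \ref{Prop-comparison-norm}, namely a uniform bound of the form
\[
\|f_n^{(k)} - f_m^{(k)}\|_{M^{\infty,1}_{(g_k)}(\bR^k)} \leq \|f_n - f_m\|_{M^{\infty,1}_{(g)}(\bR^\infty)},
\]
which will immediately make $(f_n^{(k)})_n$ a Cauchy sequence in $M^{\infty,1}(\bR^k)$ and also guarantee independence of the representative.

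First I would fix $n,m$ and pick $N \geq k$ large enough so that both $f_n$ and $f_m$ admit a representation as cylinder functions based on $\bR^N$: by the compatibility in \eqref{norm:consistency} there exist $F_n, F_m \in M^{\infty,1}(\bR^N)$ with $f_n = \Ep_N F_n$, $f_m = \Ep_N F_m$, and by the unambiguous definition \eqref{def:norm:cylinder} we have
\[
\|f_n - f_m\|_{M^{\infty,1}_{(g)}(\bR^\infty)} = \|F_n - F_m\|_{M^{\infty,1}_{(g_N)}(\bR^N)}.
\]
By construction of the sequence of windows $g_n$ in \eqref{window-g}, the Gaussian $g_N$ factorizes as a tensor product $g_N = g_k \otimes g_{N,k}$, where $g_{N,k}$ is the Gaussian on $\bR^{N-k}$ with eigenvalues $q_{k+1},\ldots,q_N$ --- precisely the form of window $g''$ required by Proposition \ref{Prop-comparison-norm}.

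Second, I would observe that $f_n^{(k)}$ is exactly the restriction of $F_n$ to $\bR^k \times \{0\} \subset \bR^N$ (after identifying this slice with $\bR^k$). Applying Proposition \ref{Prop-comparison-norm} with $g' = g_k$ and $g'' = g_{N,k}$ to the difference $F_n - F_m \in M^{\infty,1}(\bR^N)$ yields
\[
\|f_n^{(k)} - f_m^{(k)}\|_{M^{\infty,1}_{(g_k)}(\bR^k)} \leq \|F_n - F_m\|_{M^{\infty,1}_{(g_N)}(\bR^N)} = \|f_n - f_m\|_{M^{\infty,1}_{(g)}(\bR^\infty)}.
\]
Since $(f_n)_n$ is Cauchy in the $M^{\infty,1}(\bR^\infty)$-norm by assumption, the right-hand side vanishes as $n,m \to \infty$, so $(f_n^{(k)})_n$ is Cauchy in $M^{\infty,1}(\bR^k)$ and converges to some $f^{(k)} \in M^{\infty,1}(\bR^k)$.

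Finally, for independence of the representative I would invoke the very same inequality: if $(f_n)_n$ and $(\tilde f_n)_n$ both represent $f \in D(L)$, then $\|f_n - \tilde f_n\|_{M^{\infty,1}_{(g)}(\bR^\infty)} \to 0$, and applying the estimate above to the difference gives $\|f_n^{(k)} - \tilde f_n^{(k)}\|_{M^{\infty,1}_{(g_k)}(\bR^k)} \to 0$, so the two limits coincide in $M^{\infty,1}(\bR^k)$. The only subtle point is the compatibility between the window factorization required by Proposition \ref{Prop-comparison-norm} and the specific Gaussian windows used to define the $M^{\infty,1}(\bR^\infty)$-norm --- a concern that is settled by the diagonal tensor structure of the windows $g_n$, which is exactly why they were chosen in this form.
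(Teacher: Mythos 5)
Your proposal is correct and follows essentially the same route as the paper's proof: represent the two cylinder functions over a common finite-dimensional space, use the unambiguous definition of the $M^{\infty,1}(\bR^\infty)$-norm on cylinder functions, and apply the sharp restriction estimate of Proposition \ref{Prop-comparison-norm} (legitimate here precisely because the diagonal Gaussian windows \eqref{window-g} factorize as $g_N = g_k \otimes g''$) to get the uniform bound that yields both the Cauchy property and independence of the representative. The only cosmetic difference is that the paper takes $m<n$ and extends $\tilde f_m$ to $\bR^{d_n}$ via $\Ep_{d_m}^{d_n}$ instead of choosing a common level $N$ for both functions.
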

\begin{proof}
    Let us assume that, for any $n$, the cylinder function $f_n\in \cC_0$ is the extension of a map $\tilde f_n \colon \bR^{d_n}\to \bC$. Without loss of generality we can also assume that the sequence $(d_n)_{n \in \bN}$ is strictly increasing. Assuming $m<n$, the very definition of the norm $\|\ \cdot \|_\Miuinf$ implies
    \begin{equation}\label{equality-norm}
         \|f_n-f_m \|_\Miuinf=\|\tilde f_n-\Ep_{d_m}^{d_n}\tilde f_m\|_{M_{(g_{d_n})}^{\infty,1}(\bR^{d_n})}.
    \end{equation}
    Moreover, for $n,m$ sufficiently large, we  have $k<\min\{d(n),d(m)\}$, and Proposition \ref{Prop-comparison-norm} yields
\begin{equation}
    \| f_n^{(k)}-f_m^{(k)}\|_{M_{(g_k)}^{\infty,1}(\bR^k)}=\|(\tilde  f_n)_{|\bR^k} -(\Ep_{d_m}^{d_n}\tilde  f_m)_{|\bR^k} \|_{M_{(g_k)}^{\infty,1}(\bR^k)}\leq \|\tilde f_n-\Ep_{d_m}^{d_n}\tilde f_m\|_{M_{(g_{d_n})}^{\infty,1}(\bR^{d_n})}.
\end{equation}
The claim thus follows by \eqref{equality-norm} and the Cauchy property of the sequence $(f_n)_{n\in\bN}$. In particular, there exists a map $f^{(k)}\in M^{\infty,1}(\bR^k)$ such that 
\begin{equation}
    \lim_{n\to \infty}\|  f_n^{(k)} -f^{k} \|_{M_{(g_k)}^{\infty,1}(\bR^k)} =0.
\end{equation}
If $(f_n')_{n\in\bN}$ is a Cauchy sequence equivalent to $(f_n)$, arguing as above we obtain 
\begin{equation}
    \|  f_n^{(k)} -{f'_n}^{(k)} \|_{M_{(g_k)}^{\infty,1}(\bR^k)}\leq \|f_n-f_n'\|_\Miuinf,
\end{equation}
hence the restricted sequences $(f_n^{(k)})_{n\in\bN}$ and $({f'_n}^{(k)})_{n\in\bN}$ converge to the same limit.
\end{proof}
\begin{remark}\label{rem-point-fnk}
   It is clear from the proof and Lemma \ref{lemma-in-norms} that the sequence $(f_n^{(k)})_{n \in \bN}$  converges \textit{pointwise} to the limit function $f^{(k)}$.
\end{remark}
\begin{corollary}\label{corollary-values-f}
    For every $x\in \bR^\infty$, the limit $\lim_{n\to \infty}f_n(x)$ exists and is finite. Furthermore, the limit does not depend on the representative of the equivalence class of $f$.
\end{corollary}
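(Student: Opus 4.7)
The plan is to show directly that $(f_n(x))_{n\in\bN}$ is a Cauchy sequence in $\bC$, using the key ingredient Lemma \ref{lemma-in-norms} (the pointwise estimate $\|h\|_{L^\infty}\le \|h\|_{M^{\infty,1}_{(g_n)}(\bR^n)}$ for the chosen Gaussian windows $g_n$).

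First, fix $x\in\bR^\infty$ and a representative $(f_n)_{n\in\bN}$ of the equivalence class of $f\in D(L)$. Following the setup of Theorem \ref{thm-indep-rep}, write $f_n = \Ep_{d_n}\tilde f_n$ with $\tilde f_n \in M^{\infty,1}(\bR^{d_n})$ and, without loss of generality, the sequence $(d_n)_{n\in\bN}$ strictly increasing. For $m<n$, the definition of the extension maps gives
\begin{equation}
    f_n(x) - f_m(x) = \tilde f_n(\pi_{d_n}x) - \bigl(\Ep_{d_m}^{d_n}\tilde f_m\bigr)(\pi_{d_n}x),
\end{equation}
since both $f_m$ and $\Ep_{d_m}^{d_n}\tilde f_m$ depend only on the first $d_m$ coordinates of their argument.

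Next, applying Lemma \ref{lemma-in-norms} to the function $\tilde f_n - \Ep_{d_m}^{d_n}\tilde f_m \in M^{\infty,1}(\bR^{d_n})$ and invoking the consistency identity \eqref{norm:consistency}, we obtain
\begin{equation}
    |f_n(x) - f_m(x)| \le \bigl\|\tilde f_n - \Ep_{d_m}^{d_n}\tilde f_m\bigr\|_{M^{\infty,1}_{(g_{d_n})}(\bR^{d_n})} = \|f_n - f_m\|_{M^{\infty,1}(\bR^\infty)}.
\end{equation}
Since $(f_n)_{n\in\bN}$ is Cauchy with respect to $\|\cdot\|_{M^{\infty,1}(\bR^\infty)}$, the right-hand side vanishes as $m,n\to\infty$, hence $(f_n(x))_{n\in\bN}$ is Cauchy in $\bC$ and thus converges to a finite limit.

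Finally, if $(f_n')_{n\in\bN}$ is another representative of the same equivalence class, then by definition $\|f_n - f_n'\|_{M^{\infty,1}(\bR^\infty)} \to 0$ as $n\to\infty$. The very same argument applied to the cylinder function $f_n - f_n'$ yields
\begin{equation}
    |f_n(x) - f_n'(x)| \le \|f_n - f_n'\|_{M^{\infty,1}(\bR^\infty)} \to 0,
\end{equation}
so that $\lim_n f_n(x) = \lim_n f_n'(x)$, proving the representative-independence of the limit. There is no substantive obstacle here: all the work has been absorbed into the preparatory Lemma \ref{lemma-in-norms} and the consistency identity \eqref{norm:consistency}, so the corollary is really just the pointwise shadow of the Cauchy property in the $M^{\infty,1}(\bR^\infty)$-norm.
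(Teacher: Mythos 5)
Your proposal is correct and follows essentially the same route as the paper: both proofs reduce the statement to the pointwise inequality $|f_n(x)-f_m(x)|\le \|f_n-f_m\|_\Miuinf$, obtained by rewriting the difference of two cylinder functions as a single cylinder function $\tilde f_n - \Ep_{d_m}^{d_n}\tilde f_m$ on $\bR^{d_n}$ and applying Lemma \ref{lemma-in-norms}, and then apply the identical estimate to $f_n-f_n'$ for the independence of the representative. The only cosmetic difference is that you cite the consistency identity \eqref{norm:consistency} where the paper invokes the definition \eqref{def:norm:cylinder} of the cylinder norm directly; these amount to the same thing.
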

\begin{proof}
    The result follows at once from the pointwise inequality
\begin{equation}
    |f_n(x)-f_m(x)|\leq \|f_n-f_m\|_\Miuinf.
\end{equation}
    In order to prove this bound, assume without loss of generality that $d_m\leq d_n$, and apply Lemma \ref{lemma-in-norms}:
    \begin{align*}
    |f_n(x)-f_m(x)|&=|\Ep_{d_n}\tilde f_n (x)-\Ep_{d_m}\tilde f_m (x)|\\
    &=|\tilde f_n (\pi_{d_n}x)-\tilde f_m (\pi_{d_m}x)| \\
    &=|(\tilde f_n -\Ep_{d_m}^{d_n}\tilde f_m)(\pi_{d_n}x)| \\
    &\leq \|\tilde f_n -\Ep_{d_m}^{d_n}\tilde f_m\|_{M_{(g_{d_n})}^{\infty,1}(\bR^{d_n})}\\
    &=\|f_n-f_m\|_\Miuinf.
    \end{align*}
Consider now another representative $(f_n')_{n\in\bN}$ of the equivalence class of $f$. Arguing as above, it is easy to see that for all $x\in \bR^\infty$ the sequences $(f_n(x))_{n\in\bN}$ and $(f'_n(x))_{n\in\bN}$ converge to the same limit. To be precise, assuming without loss of generality that $d(n)>d'(n)$, we have
\begin{align*}
    |f_n(x)-f'_n(x)|&=|\Ep_{d_n}\tilde f_n (x)-\Ep_{d'_n}\tilde f'_n (x)|\\
    &= |\tilde f_n (\pi_{d_n}x)-\tilde f'_n (\pi_{d'_n}x)|\\
    &\leq \|\tilde f_n -\Ep_{d'_n}^{d_n}\tilde f'_n\|_{M_{(g_{d_n})}^{\infty,1}(\bR^{d_n})}\\
     &=\|f_n-f_n'\|_\Miuinf. \qedhere
     \end{align*}
\end{proof}

As a consequence of Corollary \ref{corollary-values-f}, one can unambiguously associate the equivalence class $f$ of Cauchy sequences with a map $\bR^\infty\to \bC$, denoted again by $f$ with a slight abuse of notation, according to the rule
\begin{equation}
    f(x)\coloneqq \lim_{n\to\infty}f_n(x), \qquad x\in \bR^\infty. 
\end{equation}
In light of Remark \ref{rem-point-fnk}, the function $f^{(k)}$ obtained as the limit in $M^{\infty,1}(\bR^k)$ of the sequence $(f_n^{(k)})_{n\in\bN}$ coincides with the restriction $f|_{P_k\bR^\infty}$ of $f$ on the finite-dimensional subspaces $P_k\bR^\infty$:
\begin{equation}
    f^{(k)}(x_1,\dots,x_k)=f(x_1,\dots,x_k,0,\dots).
\end{equation}
Moreover, the following inequality holds:
\begin{equation}\label{inequality-n-fin-inf}
    \|f^{(k)}-f_n^{(k)}\|_{M_{(g_k)}^{\infty,1}(\bR^k)}\leq \|f_n-f\|_\Miuinf.
\end{equation}
    Indeed, the right-hand side of \eqref{inequality-n-fin-inf} is given (by definition) by
    \begin{equation}
        \|f_n-f\|_\Miuinf=\lim_{m\to\infty}\|f_n-f_m\|_\Miuinf.
    \end{equation}
    On the other hand, by Proposition \ref{Prop-comparison-norm}, for every $k\in \bN$ we have
    \begin{equation}
        \|f_n-f_m\|_\Miuinf\geq  \|f_n^{(k)}-f_m^{(k)}\|_{M_{(g_k)}^{\infty,1}(\bR^k)},
    \end{equation}
    and letting $m\to +\infty $ eventually yields \eqref{inequality-n-fin-inf}.

   \section{Infinite dimensional Fresnel integrals as projective systems of functionals --- sequential approach} \label{sez-proj-est-seq}
       
It is interesting to point out that even rather elementary functions $f \colon \bR^\infty \to\bR$ cannot be easily recaptured as the limit in the norm $\| \cdot \|_\Miuinf$ of quite natural cylinder functions, as shown in the following examples. 
           
\begin{example}\label{example-1}
Given $k\in \bR^\infty$, consider the map $f \colon \bR^\infty\to\bC$ given by 
\begin{equation}\label{function-example-1}
    f(x)\coloneqq \begin{cases}
    e^{\frac{i}{\hbar}k\cdot x}\, &\text{if }  k\cdot x\in \bR,\\
    0 & \text{otherwise.}
    \end{cases}
    \end{equation}          
Consider the sequence $(f_n)_{n \in \bN}$ of cylinder functions defined as 
    \begin{equation}\label{cyl:fun:fn}
    f_n(x)=e^{\hbi k_n\cdot x}, \qquad x\in \bR^\infty,
    \end{equation}
with $k_n=P_nk$. It is easy to realize that $f_n(x)$ explicitly depends only on the first $n$ components of the vector $x\in\bR^\infty$. Recall the choice of windows $g_n$ from \eqref{window-g}, and \eqref{def:norm:cylinder}. We claim that
\begin{equation}
    \|f_n-f_m\|_{M^{\infty,1}_{(g)}(\bR^\infty)}=2, \qquad \text{for any } m,n \in \bN \text{ such that } k_m \neq k_n.
\end{equation}
In order to show this equality, assume without loss of generality $m<n$ and denote by $\tilde f_n$ the map $\tilde f_n \colon \bR^n\to\bC$ such that $f_n=\Ep_n\tilde f_n$. We thus have
            \begin{equation}
                \|f_n-f_m\|_\Miuinf=\|\tilde f_n-\Ep_m^n\tilde f_m\|_\Miun.
            \end{equation} In particular, the following identity holds:
            \begin{equation}
                V_{g_n}\tilde f_n(x,\xi)=\frac{(2\pi i\hbar)^{-n/2}}{\sqrt{\det Q}}e^{\hbi x\cdot (k_n-\xi)}e^{-\frac{1}{2\hbar}\langle k_n-\xi,Q^{-1}(k_n-\xi)\rangle }. 
            \end{equation}
            Taking $n>m$ and setting $\delta\coloneqq k_n-k_m$, we have:
            \begin{multline*}
                V_{g_n}\tilde f_n(x,\xi)-V_{g_n}\Ep_m^n\tilde f_m(x,\xi)=\frac{(2\pi i\hbar)^{-n/2}}{\sqrt{\det Q}}e^{\hbi x\cdot (k_m-\xi)}e^{-\frac{1}{2\hbar}\langle k_m-\xi,Q^{-1}(k_m-\xi)\rangle }\\
                \times \left(e^{\hbi x\cdot \delta}e^{-\frac{1}{2\hbar}\langle \delta,Q^{-1}\delta\rangle } e^{-\frac{1}{\hbar}\langle\delta,Q^{-1}(k_m-\xi)\rangle }-1\right),
            \end{multline*}
            hence 
             \begin{equation}\label{difference-1}
                |V_{g_n}\tilde f_n(x,\xi)-V_{g_n}\Ep_m^n\tilde f_m(x,\xi)|=\frac{(2\pi \hbar)^{-n/2}}{\sqrt{\det Q}}e^{-\frac{1}{2\hbar}\langle k_m-\xi,Q^{-1}(k_m-\xi)\rangle } \sqrt{\rho(\delta)^2+1-2\cos(x\cdot \delta /\hbar)\rho(\delta)},
                \end{equation}
                where we set
                \begin{equation*}
                    \rho(\delta)\coloneqq e^{-\frac{1}{2\hbar}\langle \delta,Q^{-1}\delta\rangle } e^{-\frac{1}{\hbar}\langle\delta,Q^{-1}(k_m-\xi)\rangle }.
                \end{equation*}
        If $k_n=k_m$ for $m,n$ sufficiently large (i.e., if there exists an $M\in \bN$ such that $P_Mk=k$), then $\delta =0$ for all $n,m>M$ and the right-hand side of \eqref{difference-1} vanishes. On the other hand, if the vector $k\in \bR^\infty $ has an infinite number of non-vanishing components then $\delta \neq 0$ in general, and 
        \begin{align*}
                \sup_{x\in \bR^n}|V_{g_n}\tilde f_n(x,\xi)-V_{g_n}\Ep_m^n\tilde f_m(x,\xi)|&=\frac{(2\pi \hbar)^{-n/2}}{\sqrt{\det Q}}e^{-\frac{1}{2\hbar}\langle k_m-\xi,Q^{-1}(k_m-\xi)\rangle } \sqrt{\rho(\delta)^2+1+2\rho(\delta)}\\
                &=\frac{(2\pi \hbar)^{-n/2}}{\sqrt{\det Q}}e^{-\frac{1}{2\hbar}\langle k_m-\xi,Q^{-1}(k_m-\xi)\rangle } (\rho(\delta)+1)\\
                &=\frac{(2\pi \hbar)^{-n/2}}{\sqrt{\det Q}}\left(e^{-\frac{1}{2\hbar}\langle k_n-\xi,Q^{-1}(k_n-\xi)\rangle }+e^{-\frac{1}{2\hbar}\langle k_m-\xi,Q^{-1}(k_m-\xi)\rangle }\right),
                \end{align*}
eventually leading to 
\begin{equation}
    \int \sup_{x\in \bR^n}|V_{g_n}\tilde f_n(x,\xi)-V_{g_n}\Ep_m^n\tilde f_m(x,\xi)|d\xi =2.
\end{equation}
Therefore, $(f_n)_{n \in \bN}$ is not a Cauchy sequence. It is thus not surprising that the sequence also fails to converge in norm $\| \cdot \|_{\cF\cM}$ in the space of the Fourier transform of complex measures. In particular, for $f_n$ as in \eqref{cyl:fun:fn}, we have 
            $$f_n(x)=\int e^{iy\cdot x}\delta_{k_n/\hbar}(y).$$
            Therefore, for $k_m\neq k_n$ we immediately get
            \begin{equation}
                \|f_n-f_m\|_{\cF\cM}=|\delta_{k_n/\hbar}-\delta_{k_m/\hbar}|=2,
            \end{equation}
            and the sequence does not converge unless there exists some $M\in \bN$ such that $k_n=k_m$ for all $m,n>M$, i.e., $k$ has a finite number of non-vanishing components --- that is, the map $f$ is just a cylinder function.
        \end{example}
   \begin{example}\label{example-2}
       Another interesting example, similar to the previous one, is related to the cylindrical approximation of the (non-cylinder) Gaussian function $f\colon \bR^\infty\to\bC$ given by 
       \begin{equation}\label{function-example-2}
             f(x)\coloneqq \begin{cases}
                 e^{-\frac{1}{2\hbar}\sum _{n} r_nx_n^2} & \text{if } \sum _{n} r_nx_n^2<\infty,\\
                 0 &\text{otherwise},
             \end{cases}
       \end{equation} where $(r_n)_{n \in \bN}$ is a sequence of positive real numbers with infinitely many non-vanishing terms. Let us consider the sequence $(f_n)_{n \in \bN}$ of cylinder functions defined as 
            \begin{equation}\label{cyl:fun:fnGaussian}
                f_n(x)=e^{-\frac{1}{2\hbar}\sum _{k=1}^nr_kx_k^2}, \qquad x\in \bR^\infty.
            \end{equation}
            We claim that the sequence $(f_n)_{n \in \bN}$ is not a Cauchy sequence with respect to the norm $\|\cdot \|_{M^{\infty, 1}(\bR^\infty)}$. This can be seen after introducing the sequence of functions $\tilde f_n \colon \bR^n\to \bC$ such that $f_n=\Ep_n\tilde f_n$, for which we have (again for windows as in \eqref{window-g})
            \begin{equation*}
                V_{g_n}\tilde f_n(x,\xi)=(2\pi \hbar)^{-n/2}\prod_{j=1}^n\frac{1}{\sqrt{r_j+q_j}}e^{-\frac{\xi_j^2}{2\hbar(r_j+q_j)}}e^{-i\frac{q_jx_j\xi_j}{\hbar(r_j+q_j)}}e^{-\frac{r_jq_jx_j^2}{2\hbar(r_j+q_j)}}, 
            \end{equation*}
            and 
            \begin{equation*}
                V_{g_n}\Ep_m^n\tilde f_m(x,\xi)=(2\pi \hbar)^{-n/2}\prod_{j=1}^m\frac{1}{\sqrt{r_j+q_j}}e^{-\frac{\xi_j^2}{2\hbar(r_j+q_j)}}e^{-i\frac{q_jx_j\xi_j}{\hbar(r_j+q_j)}}e^{-\frac{r_jq_jx_j^2}{2\hbar(r_j+q_j)}}\prod_{j=m+1}^n\frac{1}{\sqrt q_j}e^{-\frac{i\xi_jx_j}{\hbar}}e^{-\frac{\xi_j^2}{2\hbar q_j}}.
            \end{equation*}
We thus infer
            \begin{multline*}  | V_{g_n}\tilde f_n(x,\xi)-  V_{g_n}\Ep_m^n\tilde f_m(x,\xi)|=|V_{g_m}\tilde f_m(x_1,...,x_m,\xi_1,...,\xi_m)|\\ \times(2\pi \hbar)^{-(n-m)/2}\left|\prod_{j=m+1}^n\frac{1}{\sqrt{r_j+q_j}}e^{-\frac{\xi_j^2}{2\hbar(r_j+q_j)}}e^{-i\frac{q_jx_j\xi_j}{\hbar(r_j+q_j)}}e^{-\frac{r_jq_jx_j^2}{2\hbar(r_j+q_j)}}-\prod_{j=m+1}^n\frac{1}{\sqrt q_j}e^{-\frac{i\xi_jx_j}{\hbar}}e^{-\frac{\xi_j^2}{2\hbar q_j}}\right|\,.
            \end{multline*}
           The last term on the right-hand side can be recast as 
             \begin{multline*}
             \left|\prod_{j=m+1}^n\frac{1}{\sqrt{r_j+q_j}}e^{-\frac{\xi_j^2}{2\hbar(r_j+q_j)}}e^{-i\frac{q_jx_j\xi_j}{\hbar(r_j+q_j)}}e^{-\frac{r_jq_jx_j^2}{2\hbar(r_j+q_j)}}-\prod_{j=m+1}^n\frac{1}{\sqrt q_j}e^{-\frac{i\xi_jx_j}{\hbar}}e^{-\frac{\xi_j^2}{2\hbar q_j}}\right|\\
             =\sqrt{a^2+b^2-2ab\cos\left(\sum_{j=m+1}^n\frac{q_jx_j\xi_j}{\hbar(r_j+q_j)}-\sum_{j=m+1}^n\frac{\xi_jx_j}{\hbar}\right)}\, ,
             \end{multline*}
             where we set $a\coloneqq \prod_{j=m+1}^n\frac{1}{\sqrt{r_j+q_j}}e^{-\frac{\xi_j^2}{2\hbar(r_j+q_j)}}e^{-\frac{r_jq_jx_j^2}{2\hbar(r_j+q_j)}}$ and $b \coloneqq \prod_{j=m+1}^n\frac{1}{\sqrt q_j}e^{-\frac{\xi_j^2}{2\hbar q_j}}$.
           We have
        \begin{equation}
            \lim_{x\to\infty}\sqrt{a^2+b^2-2ab\cos\left(\sum_{j=m+1}^n\frac{q_jx_j\xi_j}{\hbar(r_j+q_j)}-\sum_{j=m+1}^n\frac{\xi_jx_j}{\hbar}\right)}=b, 
        \end{equation}
            hence
             \begin{multline*}
               \sup_{x \in \rn} | V_{g_n}\tilde f_n(x,\xi)-  V_{g_n}\Ep_m^n\tilde f_m(x,\xi)|\geq  \sup_{x \in \rn} |V_{g_m}f_m(x_1,...,x_m,\xi_1,...,\xi_m)|\\ \times (2\pi \hbar)^{-(n-m)/2}\prod_{j=m+1}^n\frac{1}{\sqrt q_j}e^{-\frac{\xi_j^2}{2\hbar q_j}}\, .
               \end{multline*}
               To conclude, we have
               \[ \irn \sup_{x \in \rn} | V_{g_n}\tilde f_n(x,\xi)-  V_{g_n}\Ep_m^n\tilde f_m(x,\xi)|d\xi \geq 1,\]
               which shows that $(f_n)_{n \in \bN}$ fails to be a Cauchy sequence.
   \end{example}
  

The examples above show the difficulties related to the integration of rather simple non-cylinder functions. Indeed, the domain $D(L)=\bar \cC_0$ of the functional constructed in Section \ref{sez-proj-ext-top} appears to be too small to include a sufficiently large class of functions. Besides, under suitable assumptions on the vector $k\in \bR^\infty$ or the sequence $(r_n)_{n \in \bN} \subset\bR^+$, the functions \eqref{function-example-1} and \eqref{function-example-2} studied in Examples \ref{example-1} and \ref{example-2} belong to the Fresnel algebra $\cF(\ell^2(\bN))$ and are thus integrable according to the construction described in \cite{AlHKMa}.

In order to handle a larger class of integrable functions, so to further extend the domain of the functional, we will consider an alternative construction leading to the definition of a new extension $(L', D(L'))$ of $(L_{\min},D(L_{\min}))$.

Given $f \colon \bR^\infty \to \bC $, for all $n\in \bN$ let $f^{(n)} \colon \bR^n\to \bC$ be the  function defined as 
\begin{equation}\label{def-fn}
    f^{(n)}(x_1,\dots x_n)\coloneqq f(x_1,\dots x_n,0,0,...), \qquad (x_1,\dots x_n)\in \bR^n.
\end{equation}
Let us define the functional $(L',D(L'))$ as follows:
\begin{itemize}
    \item The domain $D(L')$ is the linear subspace of maps $f \colon \bR^\infty \to \bC$ such that the following conditions hold:
    \begin{enumerate}
        \item The map $ f^{(n)}$ belongs to $M^{\infty,1}(\bR^n)$ for every $n \in \bN$.
        \item The limit $\lim_{n\to\infty}L_n(f^{(n)})$ exists and is finite.
    \end{enumerate}
    \item If $f$ belongs to the domain $D(L')$ as defined above, we set $L'(f) \coloneqq \lim_{n\to \infty} L_n(f^{(n)})$.
\end{itemize}

It is straightforward to verify that the (generally non-cylinder) function \eqref{function-example-1} with $k\in \ell^2(\bN)$ belongs to $D(L)$ and $L(f)=e^{-\frac{i}{2\hbar}k^2}$.
In a similar fashion, assuming $\sum_nr_n<\infty$ ensures that the map \eqref{function-example-2} of Example \ref{example-2} satisfies $f\in D(L')$ and $L'(f)=\prod_n(1+ir_n)^{-1/2}$. 

More generally, it is easy to prove that the class of integrable functions in this context includes the algebra of Fourier transforms of complex measures on $\ell^2(\bN)$ (see \cite{AlBr,ELT,mazz_book}). Indeed, let $f \colon \bR^\infty\to \bC$ be of the form 
\begin{equation}
    f(x)\coloneqq \begin{cases}
        \int_{\ell^2}e^{i\langle x,y\rangle}d\mu(y)\qquad & x\in \ell^2(\bN),\\
        h(x) &x\in \bR^\infty\setminus \ell^2(\bN),
    \end{cases}
\end{equation}
where $\mu $ is a complex Borel measure on $\ell^2(\bN)$ and $h\colon \bR^\infty\setminus \ell^2(\bN)$ is an arbitrary function. In this case, each function $f^{(n)} \colon \bR^n\to\bC$ of the sequence defined in \eqref{def-fn} belongs to $\cF\cM(\bR^n)$ and can be represented as
\begin{equation}
    f^{(n)}(x)=\int_{\ell^2} e^{i\langle x,y\rangle}d\mu_n(y),\qquad x\in \bR^n,
\end{equation}
where $\mu_n$ is the complex Borel measure on $\bR^n$ obtained as the pushforward measure of $\mu$ under the projection map $\pi_n \colon \ell^2(\bN)\to\bR^n$. In view of \eqref{Parseval-type-rd}, the Fresnel integral of $f^{(n)}$
can be computed in terms of the Parseval type equality:
\begin{align*}
    L_n(f^{(n)})&=\int_{\bR^n }e^{-\frac{i\hbar}{2}|x|^2}d\mu_n (x)\\
    &=\int_{\ell^2 }e^{-\frac{i\hbar}{2}|\pi_n x|^2}d\mu (x).
\end{align*}
One can now take the limit of both sides for $n\to \infty$, and by dominated convergence theorem we obtain
$$L'(f)=\int_{\ell^2 }e^{-\frac{i\hbar}{2}|x|^2}d\mu (x).$$

{\begin{remark}
    Direct inspection of the definition of $(L',D(L'))$, as well as of the examples above, show that the image $L'(f)$ of $f\in D(L')$ depends only on the values that the map $f \colon \bR^{\infty}\to \bC$ attains on the set $c_{00}$ of finitely supported real sequences, namely $(x_n)_{n \in \bN}$ such that $x_n=0$ for all but finitely many $n$.
\end{remark}}

We are now ready to prove that $L'$ is an extension of the functional $L$ introduced in Section \ref{sez-proj-ext-top}, hence generalizing the topological construction described in Section \ref{sez-proj-ext-top}.
\begin{theorem} \label{thm-L'-ext}
For any $f\in D(L)$ the following holds:
\begin{equation} f\in D(L'), \qquad L'(f)=L(f). \end{equation}
\end{theorem}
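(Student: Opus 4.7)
The plan is a standard triangle-inequality/approximation argument that hinges on three ingredients already established: (i) the uniform norm bound $M \coloneqq \sup_n \|L_n\| < \infty$ from Corollary \ref{teo:bounded:operator}; (ii) the restriction inequality \eqref{inequality-n-fin-inf}, a consequence of Proposition \ref{Prop-comparison-norm}; and (iii) the projective compatibility of the system $\{L_n, D(L_n)\}_n$, which renders $L_n(f_k^{(n)})$ independent of $n$ once $n$ exceeds the "cylinder dimension" of $f_k$.

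Fix $f \in D(L) = \overline{\cC_0}$ and let $(f_k)_{k \in \bN} \subset \cC_0$ be a representing Cauchy sequence, with $f_k = \Ep_{d_k} \tilde f_k$, $\tilde f_k \in M^{\infty,1}(\bR^{d_k})$. First, I verify that $f \in D(L')$, namely that $f^{(n)} \in M^{\infty,1}(\bR^n)$ for every $n$ and that $\lim_n L_n(f^{(n)})$ exists. Condition (1) in the definition of $D(L')$ is essentially Theorem \ref{thm-indep-rep}: the sequence $(f_k^{(n)})_k$ is Cauchy in $M^{\infty,1}(\bR^n)$ with limit $f^{(n)}$, which coincides (by Remark \ref{rem-point-fnk} and Corollary \ref{corollary-values-f}) with the pointwise restriction $x \mapsto f(x_1,\dots,x_n,0,\dots)$ used in the definition of $L'$.

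For condition (2), fix $k$ and observe that for every $n \geq d_k$ one has $f_k^{(n)} = \Ep_{d_k}^n \tilde f_k$, so by the projective compatibility of $\{L_n\}$,
\begin{equation}
    L_n(f_k^{(n)}) = L_n(\Ep_{d_k}^n \tilde f_k) = L_{d_k}(\tilde f_k) = L(f_k).
\end{equation}
Combining this identity with the linearity of $L_n$, the norm bound $\|L_n\| \leq M$, and inequality \eqref{inequality-n-fin-inf}, I get for $n \geq d_k$
\begin{equation}
    |L_n(f^{(n)}) - L(f_k)| = |L_n(f^{(n)} - f_k^{(n)})| \leq M\, \|f^{(n)} - f_k^{(n)}\|_{M^{\infty,1}_{(g_n)}(\bR^n)} \leq M\, \|f - f_k\|_\Miuinf.
\end{equation}

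To conclude, fix $\veps>0$ and choose $k$ so large that both $M\|f-f_k\|_\Miuinf < \veps/2$ and $|L(f_k) - L(f)| < \veps/2$ (the latter is possible because, by the very construction in Section \ref{sez-proj-ext-top}, $L(f) = \lim_k L(f_k)$). Then, for every $n \geq d_k$,
\begin{equation}
    |L_n(f^{(n)}) - L(f)| \leq |L_n(f^{(n)}) - L(f_k)| + |L(f_k) - L(f)| < \veps.
\end{equation}
This shows simultaneously that $\lim_n L_n(f^{(n)})$ exists (so $f \in D(L')$) and that the limit equals $L(f)$, i.e.\ $L'(f) = L(f)$. The only non-routine step is matching the pointwise "restrict-to-$P_n\bR^\infty$" description of $f^{(n)}$ used by $L'$ with the $M^{\infty,1}(\bR^n)$-limit description provided by Theorem \ref{thm-indep-rep}, but this identification has already been carried out in Section \ref{sez-proj-ext-top}.
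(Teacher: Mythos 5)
Your proof is correct, and it rests on the same three pillars as the paper's argument: the uniform bound $\sup_n\|L_n\|<\infty$ from Corollary \ref{teo:bounded:operator}, the restriction estimate \eqref{inequality-n-fin-inf} (hence Proposition \ref{Prop-comparison-norm}), and Theorem \ref{thm-indep-rep} together with Remark \ref{rem-point-fnk} to identify the pointwise restriction $f^{(n)}$ with the $M^{\infty,1}(\bR^n)$-limit of the restricted cylinder functions. The organization, though, is genuinely different. The paper first reduces, without loss of generality, to cylinder dimensions $d_n=n$ (by inserting intermediate extensions), and then proves that $(f_n)_n$ and $(\Ep_m f^{(m)})_m$ are \emph{equivalent Cauchy sequences} in $\|\cdot\|_\Miuinf$, via a three-term triangle inequality whose middle term $\|\tilde f_m - f_n^{(m)}\|_\Mium$ needs its own application of Proposition \ref{Prop-comparison-norm}; continuity of $L$ (i.e., uniform boundedness) then yields equality of the limits. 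You work instead directly with the numbers $L_n(f^{(n)})$: the exact identity $L_n(f_k^{(n)})=L_{d_k}(\tilde f_k)=L(f_k)$ for $n\ge d_k$, coming from the projective compatibility of the system $\{L_n\}_n$ (Definition \ref{def-projectivesystemsfunctionals}), replaces both the reduction to $d_n=n$ and the paper's middle term, after which a single use of \eqref{inequality-n-fin-inf} plus a routine $\veps/2$-argument closes the proof. What the paper's route buys is the intermediate statement, of some independent interest, that $(\Ep_m f^{(m)})_m$ is itself a Cauchy sequence of cylinder functions representing $f$ in $D(L)$; what your route buys is economy and the explicit quantitative bound $|L_n(f^{(n)})-L(f)|\le M\|f-f_k\|_\Miuinf+|L(f_k)-L(f)|$, valid for every $n\ge d_k$, which expresses the rate of convergence in terms of how well the cylinder approximant $f_k$ captures $f$.
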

\begin{proof}
    Consider $f\in D(L)$, that is an equivalence class of Cauchy sequences (in the norm $\|\cdot \|_\Miuinf$) of cylinder functions $(f_n)_{n\in\bN}$. Each cylinder map $f_n \colon \bR^\infty\to \bC$ can be represented as the extension of a function $\tilde f_n \colon \bR^{d_n}\to \bC$ in such a way that $f_n=\Ep_{d_n} \tilde f_n$. Without loss of generality, by exploiting identity \eqref{rel-Ep} we can assume that the sequence $(d_n)_{n\in\bN}$ is strictly increasing. In fact, it suffices to consider the case where $d_n=n$ --- otherwise, one can construct a sequence $(f'_n)_{n\in\bN}$, equivalent to $(f_n)_{n\in\bN}$ and satisfying $d'_n=n$, by setting
    \begin{equation}
        f'_m \coloneqq \begin{cases}
            f_n & (m=d_n)\\
            \Ep_{d_n}^mf_n     &  (d_n <m<d_{n+1}).
        \end{cases}
    \end{equation}
    
     Assume then $d_n=n$ from now on. For any $m\in \bN$, consider the restriction map $f^{(m)} \colon \bR^m\to \bC$ defined by \eqref{def-fn}. As discussed in Section \ref{sez-proj-ext-top}, $f^{(m)}$ coincides with the limit in $M^{\infty,1}(\bR^m)$ of the sequence of restricted cylinder functions  $f_n^{(m)}$. Hence, due to the closure of  $M^{\infty,1}(\bR^m)$, we get $f^{m}\in M^{\infty,1}(\bR^m)$ and we only need to show that the limits $\lim_{n\to\infty}L(f_n)$ and $\lim_{n\to\infty}L_n(f^{(n)})$ do coincide. In order to prove this, by the uniform boundedness of the family of operators $\{L_n\}$ (Theorem \ref{teo:bounded:operator}), it is enough to show that the sequences of cylinder functions $(f_n)_n$ and $(\Ep_m f^{(m)})_m$ are equivalent. Indeed, for $n>m$ we have:
\begin{align*}
     \|f_n-\Ep_m f^{(m)}\|_\Miuinf &=&& \|f_n-f|_{P_m\bR^\infty}\|_\Miuinf \\
    &=&&\| f_n-f_m+f_m-f_n|_{P_m\bR^\infty}+f_n|_{P_m\bR^\infty}-f|_{P_m\bR^\infty}   \|_\Miuinf \\
    & \leq && \| f_n-f_m  \|_\Miuinf +\|f_m-f_n|_{P_m\bR^\infty}\|_\Miuinf \\ 
    & && +\|f_n|_{P_m\bR^\infty}-f|_{P_m\bR^\infty}   \|_\Miuinf \\ 
    &=&& \| f_n-f_m  \|_\Miuinf+\|\tilde f_m-f_n^{(m)}\|_\Mium \\ 
    & && +\|f_n^{(m)}-f^{(m)}\|_\Mium.
\end{align*}
   In the last line, the first term $ \| f_n-f_m  \|_\Miuinf $ converges to 0 for $m,n\to \infty $ since $(f_n)_{n\in\bN}$ is a Cauchy sequence by assumption. Similarly, the third term $\|f_n^{(m)}-f^{(m)}\|_\Mium$ has the same behaviour by virtue of the inequality \eqref{inequality-n-fin-inf}. Concerning the second term $\|\tilde f_m-f_n^{(m)}\|_\Mium$, it can be rephrased as
   \begin{equation*}
       \|\tilde f_m-f_n^{(m)}\|_\Mium =\|(\Ep_m^n\tilde f_m)_{|\bR^m}-(\tilde f_n)_{|\bR^m}\|_\Mium,
   \end{equation*}
     and by Proposition \ref{Prop-comparison-norm} it is bounded by
     \begin{equation*}
     =\|(\Ep_m^n\tilde f_m)_{|\bR^m}-(\tilde f_n)_{|\bR^m}\|_\Mium\leq \|\Ep_m^n\tilde f_m-\tilde f_n\|_\Miun = \|f_m-f_n\|_\Miuinf, 
      \end{equation*}
      hence it vanishes as well for $n,m\to\infty$.
    \end{proof}

A concrete instance of this general result is provided by the function $f$ studied in Example \ref{examplef-nCauchy}. Since $f(x)=\prod_{j\geq 1}(1+a_je_j(x_j))$, the functions $\tilde f_n \colon \bR^n\to \bC$ are given by $\tilde f_n (x_1,\dots x_n)\coloneqq \prod_{j= 1}^n(1+a_je_j(x_j))\prod_{j> n}(1+a_je_j(0))$, where the convergence of the infinite product $\prod_{j> n}(1+a_je_j(0))$ follows from the assumption of the convergence of $\prod_{j> n}(1+a_j\|e_j\|_{M^{\infty,1}_{(g)}(\bR)})$ and the inequality $\|e_j\|_\infty\leq C_g\|e_j\|_{M^{\infty,1}_{(g)}(\bR)}$ (see Lemma \ref{lemma-in-norms}), where $C_g>0$ is a suitable constant that depends on the window $g$. Computing $ L_n(\tilde f_n)$ yields
\begin{equation}
     L_n(\tilde f_n)=\prod_{j> n}(1+a_j\|e_j\|_{M^{\infty,1}_{(g)}(\bR)})\prod_{j=1}^n(1+a_jL_1(e_j)),
\end{equation}
and since $\lim_{n\to \infty}\prod_{j> n}(1+a_j\|e_j\|_{M^{\infty,1}_{(g)}(\bR)})=1 $ we finally obtain
\begin{equation}
    L'(F)=\prod_{j\geq 1}(1+a_jL_1(e_j)),
\end{equation}
which coincides with \eqref{OperatorLexamplef-nCauchy}.

\subsection{A new class of integrable functions}

Let us now consider a concrete example of a map $f\in D(L')$ which does not belong to the class $\cF (\ell^2(\bN))$ of Fourier transforms of measures on $\ell^2(\bN)$, showing then that $D(L')$ extends even the Fresnel class $\cF \cM(\ell^2(\bN))$.

{ Let $h \in \Sjo(\bR)$ and $k=(k_n)_{n\in \bN} \in \ell^2(\bN)$ be a sequence of real numbers. Consider the  function $f \colon \bR^\infty \to \bC$ defined by
\begin{equation}
    f(x) \coloneqq \begin{cases}
        h(k\cdot x) & x\in \ell^2(\bN)\\
        0 & x\in \bR^\infty \setminus \ell^2(\bN)
    \end{cases}
\end{equation}
By construction, $f$ is not a cylinder function unless $k\in c_{00}$. Moreover, in the interesting case where $h\in \Sjo(\bR)\setminus \cF\cM(\bR)$ (see for instance Example \ref{example-embed}), the map $f$ does not belong to the Albeverio--H{\o}egh-Krohn class $\cF\cM(\ell^2(\bN))$ and thus provides a non trivial example of a new Fresnel integrable function.

We now prove that $f\in D(L')$. To this aim, for $N \in \bN$ set \begin{equation}
    \bk_N \coloneqq \pi_N k = (k_1, \ldots, k_N) \in \bR^N, \qquad \alpha_N(x) \coloneqq \pi_N k \cdot x = \sum_{j=1}^N k_j x_j, \quad x \in \bR^N, 
\end{equation} and consider the sequence of functions $(f^{n})_{n\in \bN}$, $f^{(n)} \colon \rn \to \bC$, defined by \eqref{def-fn}. These can be equivalently represented as  follows: 
\begin{equation} \label{f-n-fin} \qquad f^{(n)}(x) \coloneqq h(\alpha_n(x))\,,\qquad x\in \bR^n\,. 
\end{equation}
We now investigate the behavior of the sequence of Fresnel integrals 
\begin{equation}
    L_n(f^{(n)}) = \widetilde{\irn}e^{\hbid |x|^2}f^{(n)}(x)dx. 
\end{equation}
To this end, it is convenient to introduce an inversion formula, allowing to compute the values attained by a map in $f \in \Sjord$ in terms of the STFT of its Fourier transform $\hat f$.
}
Given $\omega \in \rd$, let $\psi_\omega \colon \rd \to \bC$ be the generalized pure tone at frequency $\omega$, namely $\psi_\omega(y) \coloneqq \twph^{-d/2} e^{\hbi \omega \cdot y}$. A straightforward computation shows that \[ V_g \psi_\omega (x,\xi) = \twph^{-d/2} e^{-\hbi (\xi-\omega) \cdot x} \overline{\hat{g}(\omega-\xi)}.\]

\begin{lemma}\label{lem:rep_infou}
    For every $f \in \Sjord$ and $g,\gamma \in \cS(\rd)$ such that $\lan g,\gamma \ran \ne 0$, we have the representation formula
    \begin{align} \label{eq:f_rep_invfou}
        f(t) & = \frac{1}{\lan \gamma,g \ran} \irdd V_g \wh{f}(x,\xi)  \overline{V_\g \psi_{-t}(x,\xi)} dxd\xi \\
        & = \frac{1}{\lan \gamma,g \ran} \irdd V_g \wh{f}(x,\xi) \cV_\g \psi_{t}(x,\xi) dxd\xi
        , \qquad t \in \rd. 
    \end{align}
\end{lemma}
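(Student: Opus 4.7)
The plan is to apply the STFT inversion formula \eqref{eq:invers-stft} to $\wh{f}$ and then take an inverse Fourier transform, evaluating pointwise at $t\in\rd$. Since $f\in\Sjord$ we have $\wh{f}\in\fSjord$, so \eqref{eq:invers-stft} yields
\begin{equation*}
    \wh{f}=\frac{1}{\lan\gamma,g\ran}\irdd V_g\wh{f}(x,\xi)\,\pi(x,\xi)\gamma\,dxd\xi
\end{equation*}
as an identity in $\cS'(\rd)$. Combining this with the pointwise identity $f=\cF^{-1}\wh{f}$ --- meaningful since $f$ is (uniformly) continuous on $\rd$, see Section~\ref{sec-sjo} --- should produce the claimed representation, provided the inverse Fourier transform can be brought inside the phase space integral.

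\textbf{Identification of the kernel.} A direct calculation using the definitions of $\pi(x,\xi)\gamma=M_\xi T_x\gamma$ and of $\wh\gamma$ gives, for each fixed $(x,\xi)\in\rdd$,
\begin{equation*}
    \cF^{-1}[\pi(x,\xi)\gamma](t)=\twph^{-d/2}e^{\hbi(t+\xi)\cdot x}\wh\gamma(-(t+\xi)).
\end{equation*}
On the other hand, specializing the formula for $V_g\psi_\omega$ recorded immediately before the statement to $\omega=-t$ and $g=\gamma$ and taking complex conjugates yields the same expression for $\overline{V_\gamma\psi_{-t}(x,\xi)}$. Thus the inverse Fourier transform of the kernel in \eqref{eq:invers-stft} is precisely $\overline{V_\gamma\psi_{-t}(x,\xi)}$, which matches the desired integrand.

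\textbf{Interchange and conclusion.} To swap $\cF^{-1}$ with the integral I would invoke Fubini, after checking absolute integrability. The equality $|V_\gamma\psi_{-t}(x,\xi)|=\twph^{-d/2}|\wh\gamma(-t-\xi)|$ shows that this factor is constant in $x$ and Schwartz-decaying in $\xi$, while the membership $\wh{f}\in\fSjord$ gives $\int_{\rd}|V_g\wh{f}(x,\xi)|\,dx\le\|\wh{f}\|_{W^{\infty,1}_{(g)}}$ uniformly in $\xi$; thus the product $V_g\wh{f}(x,\xi)\,\overline{V_\gamma\psi_{-t}(x,\xi)}$ lies in $L^1(\rdd)$. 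Once absolute integrability is in hand, the cleanest way to make the swap rigorous is to pair \eqref{eq:invers-stft} with an arbitrary $\phi\in\cS(\rd)$, apply Fubini, and then specialize $\phi$ to an approximate identity centered at $t$ (using the continuity of $f$ to pass to the limit, and the bounds above to invoke dominated convergence on the phase-space side). The first identity in the statement follows. The second one is immediate from the definition $\cV_\gamma h=\overline{V_\gamma\bar h}$ and $\overline{\psi_t}=\psi_{-t}$, which together yield $\cV_\gamma\psi_t=\overline{V_\gamma\psi_{-t}}$.

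\textbf{Main obstacle.} The principal technical point is precisely the interchange: $\psi_{-t}$ does not belong to any of the standard modulation spaces, so the dual pairing \eqref{eq dual star} does not apply directly with $h=\wh f$ and $f=\psi_{-t}$. However, the very special structure of $V_\gamma\psi_{-t}$ (bounded in $x$, Schwartz in $\xi$) combined with the $\fSjord$-regularity of $\wh f$ reduces the question to a standard Fubini/approximation argument, as sketched above.
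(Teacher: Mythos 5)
Your proof is correct, but it takes a genuinely different route from the paper's. The paper first verifies absolute convergence of the integral exactly as you do (via $|V_\gamma \psi_{-t}(x,\xi)| = \twph^{-d/2}|\hat\gamma(-\xi-t)|$ together with $V_g\wh{f}\in L^1_x(L^\infty_\xi)$), then proves the formula for $f\in\cS(\rd)$ by a direct Parseval computation in the $\xi$-variable --- the phase-space integral collapses to $\twph^{-d/2}\ird \wh{f}(\xi)e^{\hbi t\cdot\xi}\,d\xi = f(t)$ after the $x$-integration produces the factor $\lan\gamma,g\ran$ --- and finally extends to general $f\in\Sjord$ by choosing Schwartz functions $f_n\to f$ in the narrow sense, applying dominated convergence on the phase-space side, and using that narrow convergence implies pointwise convergence. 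You instead apply the distributional inversion formula \eqref{eq:invers-stft} to $\wh{f}$, commute $\cF^{-1}$ with the weak integral, identify $\cF^{-1}[\pi(x,\xi)\gamma](t)$ with $\overline{V_\gamma\psi_{-t}(x,\xi)}$ (your computation checks out against the formula for $V_g\psi_\omega$ stated before the lemma), and then upgrade the distributional identity to a pointwise identity at $t$ via an approximate identity and dominated convergence. The trade-off: your argument needs no approximation of $f$ itself --- in particular it bypasses the (cited, nontrivial) narrow density of $\cS(\rd)$ in $\Sjord$ --- at the cost of manipulating weak integrals in $\cS'(\rd)$ and running a test-function limiting argument; the paper's route is computationally more elementary (a single Parseval application on Schwartz functions) but outsources the limit to the density theorem. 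Both proofs rest on the same integrability input, and your closing remark correctly diagnoses why some hands-on argument is unavoidable: the pairing \eqref{eq dual star} is stated only for dual pairs of modulation spaces, and it does not cover $\wh{f}\in\fSjord$ against $\psi_{-t}$ (which lies in $\Sjord$ and $W^{1,\infty}(\rd)$ but not in $M^{1,\infty}(\rd)$).
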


\begin{proof}
    First, let us note that the integral is well defined: 
    \begin{equation}
        |V_g \wh{f}(x,\xi) \overline{V_\g \psi_{-t}(x,\xi)}| = \twph^{-d/2} |\hat{\g}(-\xi-t)| |V_g \wh{f}(x,\xi)| \in L^1(\rdd),
    \end{equation} since $|V_g \hat{f}(x,\xi)| \in L^1_x(L^\infty_\xi)$ by assumption. 

    The representation formula \eqref{eq:f_rep_invfou} holds if $f \in \cS(\rd)$ as a consequence of the Parseval theorem. Indeed,
    \begin{align*}
    \frac{1}{\lan \gamma,g \ran} \irdd V_g \wh{f}(x,\xi) \cV_\g \psi_{t}(x,\xi) dxd\xi & =  \frac{1}{\lan \gamma,g \ran} \irdd V_g \wh{f}(x,\xi) \overline{V_\g \psi_{-t}(x,\xi)} dxd\xi    \\
    & = \frac{1}{\lan \gamma,g \ran} \irdd \cF [ \wh{f} \cdot \overline{T_xg}](\xi) \overline{ \cF [\psi_{-t} \overline{T_x \gamma}](\xi)} dxd\xi \\ 
    & = \frac{1}{\lan \gamma,g \ran} \irdd \wh{f} \cdot \overline{T_xg}(\xi) \overline{ \psi_{-t}} T_x \gamma(\xi) dxd\xi \\
    & =\twph^{-d/2}\ird \wh{f} (\xi) e^{\hbi t\cdot \xi} d\xi \\
    & = f(t).
    \end{align*}
    In general, let $(f_n)_n$ be a sequence in $\cS(\rd)$ such that $f_n \to f$ in the narrow convergence sense. Then, in view of the pointwise bound $|V_g \wh{f_n}(x,\xi)|\le H(x)$ for an integrable function $H \in L^1(\rd)$ that does not depend on $n$, we argue by dominated convergence:
\begin{align}
    \irdd V_g \wh{f}(x,\xi) \cV_\g \psi_{t}(x,\xi) dxd\xi & =\irdd \Big( \lim_{n \to \infty} V_g \wh{f_n}(x,\xi) \Big) \cV_{\g} \psi_{t}(x,\xi)  dxd\xi  \\ & = \lim_{n\to \infty} \irdd V_g \wh{f_n}(x,\xi) \cV_{\g} \psi_{t}(x,\xi) dxd\xi 
    \\ & = \lan \gamma,g \ran \lim_{n\to \infty} f_n(t) \\
    & = \lan \gamma,g \ran f(t),
\end{align} where in the last step we used the fact that if $f_n \to f$ narrowly in $\Sjo$ then $f_n \to f$ pointwise --- in fact, uniformly over compact subsets of $\rd$: given a compact subset $K \subset \rd$ and $\Psi \in \cS(\rd)$ such that $\Psi=1$ on $K$, then
    \begin{equation}
        \| (f-f_n)\Psi \|_{\infty}  \le \| V_\Psi (f-f_n)(0,\cdot) \|_{L^1} \to 0. \qedhere
    \end{equation}
\end{proof}

Coming back to the computation of the Fresnel integrals $L_n(f^{(n)})$, with $f^{(n)}$ given by \eqref{f-n-fin},  using \eqref{eq:f_rep_invfou} with $g=\gamma$ and $\|g\|_{L^2}=1$ (for instance $\gau(y)=(\pi \hbar)^{-d/4} e^{-\frac{1}{2\hbar}|y|^2}$), we have
\begin{align*}
L_n(f^{(n)}) & = \widetilde{\irn}e^{\hbid |y|^2}f^{(n)}(y)dy \\
& = \lim_{\veps \downarrow 0} \, \twpih^{-n/2} \irn e^{\hbid |y|^2} h(\alpha_n( y)) \vp(\veps y) dy \\ 
& = \lim_{\veps \downarrow 0} \, \twpih^{-n/2} \irn e^{\hbid |y|^2} \Big( \int_{\bR^2} V_g \wh{h}(x,\xi) \overline{V_g \psi_{-\alpha_n(y)}(x,\xi)} dxd\xi \Big) \vp(\veps y) dy.
\end{align*}

It is straightforward to note that the function \[
F_{\veps}(y,x,\xi) \coloneqq e^{\hbid |y|^2} V_g \wh{h}(x,\xi) V_g \psi_{-\alpha_n(y)}(x,\xi) \vp(\veps y)
\]
belongs to $L^1(\rn \times \bR \times \bR)$ for every fixed $\veps >0$, hence by Fubini's theorem we have
\begin{equation}
    L_n(f^{(n)}) = \lim_{\veps \downarrow 0} \, \twpih^{-n/2} \int_{\bR^2} \Big( \irn e^{\hbid |y|^2} \overline{V_g \psi_{-\alpha_n(y)} (x,\xi)} \vp(\veps y) dy  \Big) V_g \wh{h}(x,\xi) dxd\xi. 
\end{equation}
By explicit computation (see Appendix \ref{appendix-formulaFresenel})
we get 
\begin{multline}
   \twpih^{-n/2}  \irn e^{\hbid |y|^2} {\overline{V_g \psi_{-\alpha_n(y)} (x,\xi)}} \vp(\veps y) dy  \\ =e^{\hbi x \xi}\twph^{-1}\int _{\bR^n\times  \bR}e^{-\frac{i}{2\hbar}|\lambda \bk_n +x \bk_n+\veps\hbar w|^2}\hat\varphi (w)e^{\frac{i}{\hbar}\lambda\xi} g(\lambda) dw d\lambda. \label{rep-FresnelVgpsi}
\end{multline}
By dominated convergence, exploiting the fact that $\varphi(0)=\int_{\bR^n}\hat\varphi (w)dw=1$,  we finally obtain: 
\begin{align}
     L_n(f^{(n)}) &= \twph^{-1} \int_{\bR^2} \left(\int_\bR e^{-\frac{i}{2\hbar}|\bk_n|^2(\lambda+x)^2}e^{\frac{i}{\hbar}(\lambda+x)\xi} {g(\lambda)} d\lambda \right)V_g \wh{h}(x,\xi) dxd\xi\\
     &= \twph^{-1} \int_{\bR^2} \left(\int_\bR e^{-\frac{i}{2\hbar}|\bk_n|^2y^2}e^{\frac{i}{\hbar}y\xi} {g(y-x)}dy\right)V_g \wh{h}(x,\xi) dxd\xi \\
     & = \twph^{-1} \int_{\bR^2} \overline{\left(\int_\bR e^{\frac{i}{2\hbar}|\bk_n|^2y^2}e^{-\frac{i}{\hbar}y\xi} \overline{g(y-x)}dy\right)} V_g \wh{h}(x,\xi) dxd\xi \\
     & = \int_{\bR^2} \overline{V_g(\Fp \circ \| \pi_n k\|)(x,\xi)} V_g \wh{h}(x,\xi) dxd\xi.
\end{align} 
To sum up, in light of \eqref{eq dual star}, we have the explicit formula
\begin{align}
    L_n(f^{(n)}) & = \widetilde{\irn}e^{\hbid |x|^2}f^{(n)}(x)dx \\ 
    &  =  \int_{\bR^2} V_g \wh{h}(x,\xi) \overline{V_{g} (\Fp \circ \|\pi_n k\|)(x,\xi)} dxd\xi  \\
    & =  \lan \wh{h},\Fp \circ \|\pi_n k\| \ran_*.
\end{align}
We are now concerned with the limit $\lim_{n\to \infty} L_n(f^{(n)})$, and we claim that
\begin{align}   \label{FinalResult}
   L'(f) & = \lim_{n\to \infty} L_n(f^{(n)}) \\ & = \int_{\bR^2} V_g \wh{h}(x,\xi) \overline{V_g (\Fp \circ \|k\|_{\ell^2})(x,\xi)}dxd\xi   \\ 
   &  = \lan \wh{h},\Fp \circ \|k\|_{\ell^2} \ran_*,
\end{align}
cf. \eqref{eq dual star}. Let us highlight that if $n$ is such that $\pi_n k = 0$ then
\begin{align*}
    L_n(f^{(n)}) & = \int_{\bR^2} V_g \wh{h}(x,\xi) \overline{V_g 1 (x,\xi)}  dxd\xi \\
    & = \lan \wh{h}, 1 \ran_* = f(0). 
\end{align*} 
Actually, the claim in \eqref{FinalResult} is a consequence of the following general result in the case where $H(x,\xi)=V_g \wh{h}(x,\xi)$ --- the proof being postponed to Appendix \ref{app-proof_Tn}. 
\begin{proposition}\label{prop-Tn}
In the setting introduced above, consider the family of linear functionals 
\begin{equation}
    T_n \colon L^1_x(L^\infty_\xi)(\bR^2) \to \bC, \qquad T_n(H) \coloneqq \int_{\bR^2} H(x,\xi) \overline{V_g (\Fp \circ \|\pi_n k\|)(x,\xi)} dxd\xi, 
\end{equation} and similarly set 
\begin{equation}
    T \colon L^1_x(L^\infty_\xi)(\bR^2) \to \bC, \qquad T(H) \coloneqq \int_{\bR^2} H(x,\xi) \overline{V_g (\Fp \circ \|k\|)(x,\xi)} dxd\xi.
\end{equation} Then: 
\begin{enumerate}
    \item The functionals $T_n$ are uniformly bounded with respect to $n$. 
    \item There exists a dense subspace $X \subset L^1_{x}(L^\infty_\xi)(\bR^2)$ such that $T_n(H)\to T(H)$ for all $H \in X$. 
\end{enumerate} As a result, we have $T_n(H)\to T(H)$ for all $H \in L^1_x(L^\infty_\xi)(\bR^2)$. 
\end{proposition}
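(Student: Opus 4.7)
The proof structure I would adopt is a standard two-step Banach--Steinhaus-type argument: first establish the uniform boundedness (1) of the $T_n$'s, and second prove convergence on a suitable dense subspace $X$. An $\varepsilon/3$ triangle inequality will then deliver the final claim on the whole of $L^1_x(L^\infty_\xi)(\bR^2)$.

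For part (1), the H\"older-type inequality for mixed Lebesgue norms yields
$$|T_n(H)|\leq \|H\|_{L^1_x(L^\infty_\xi)}\,\|V_g(\Fp\circ \lambda_n)\|_{L^\infty_x(L^1_\xi)},\qquad \lambda_n\coloneqq \|\pi_n k\|_{\ell^2},$$
so that it suffices to bound $\|V_g(\Fp\circ \lambda_n)\|_{L^\infty_x(L^1_\xi)}$ uniformly in $n$, using $\lambda_n\leq \|k\|_{\ell^2}$. Selecting the Gaussian window $g=\gau$ from Lemma \ref{lem:fresnel_function}, a direct calculation analogous to the one performed there shows that $|V_{\gau}(\Fp\circ \lambda)(x,\xi)|$ equals a constant multiple of $(1+\lambda^4)^{-1/4}\exp\big(-\tfrac{(\xi-\lambda^2 x)^2}{2\hbar(1+\lambda^4)}\big)$. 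Integration in $\xi$ produces a quantity independent of $x$ and bounded by $C(1+\lambda^4)^{1/4}$, whence $\sup_n\|T_n\|\leq C(1+\|k\|_{\ell^2}^4)^{1/4}<\infty$. For a non-Gaussian $g\in\cS(\bR)$ I would switch window, paying the price of a multiplicative constant through the equivalence of modulation-space norms.

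For part (2), I would take $X$ to be the algebraic tensor product $L^1(\bR_x)\otimes L^\infty(\bR_\xi)$, i.e., finite linear combinations of elementary tensors $\phi(x)\psi(\xi)$ with $\phi\in L^1(\bR)$, $\psi\in L^\infty(\bR)$. On such elements, Fubini (justified by (1)) recasts $T_n(\phi\otimes\psi)$ as
$$\int_{\bR}\phi(x)\Big(\int_{\bR}\psi(\xi)\,\overline{V_g(\Fp\circ\lambda_n)(x,\xi)}\, d\xi\Big) dx.$$
The pointwise convergence $V_g(\Fp\circ\lambda_n)\to V_g(\Fp\circ \|k\|_{\ell^2})$ on $\bR^2$ follows immediately from $\lambda_n\to \|k\|_{\ell^2}$ and dominated convergence in the integral defining the STFT. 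The explicit Gaussian envelope of part (1) provides an $L^1_\xi$-dominant uniformly in $n$, so dominated convergence in $\xi$ yields pointwise convergence in $x$ of the inner integral. The latter is in turn uniformly bounded in $n$ and $x$ (by (1)), and since $\phi\in L^1$, a second application of dominated convergence gives $T_n(\phi\otimes\psi)\to T(\phi\otimes\psi)$; linearity extends this to all of $X$. Density of $X$ in $L^1_x(L^\infty_\xi)$ follows by approximation of an arbitrary $H$ via simple functions $\sum_i \chi_{A_i}(x)\psi_i(\xi)$, with $A_i\subset\bR$ of finite measure and $\psi_i\in L^\infty$.

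Combining (1) and (2) is then routine: given $H\in L^1_x(L^\infty_\xi)$ and $\varepsilon>0$, pick $\tilde H\in X$ with $\|H-\tilde H\|<\varepsilon$, so that
$$|T_n(H)-T(H)|\leq (\|T_n\|+\|T\|)\|H-\tilde H\|+|T_n(\tilde H)-T(\tilde H)|\leq 2C\varepsilon+|T_n(\tilde H)-T(\tilde H)|,$$
and the last term vanishes as $n\to\infty$. I expect the hard part of the argument to be the explicit computation in (1), which requires carrying out a Gaussian integral in the spirit of Lemma \ref{lem:fresnel_function} and extracting the correct $\lambda$-dependence; by contrast, the dense-subspace convergence and the density statement itself are standard measure-theoretic manipulations, and the final splitting is the textbook Banach--Steinhaus reduction.
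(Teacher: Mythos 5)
Your Step 1 is correct, and in fact cleaner and sharper than the paper's own argument: the explicit Gaussian computation gives $\lvert V_{\gau}(\Fp\circ\lambda)(x,\xi)\rvert = C(1+\lambda^4)^{-1/4}\exp\bigl(-\tfrac{(\xi-\lambda^2x)^2}{2\hbar(1+\lambda^4)}\bigr)$, hence $\sup_x\int_\bR \lvert V_{\gau}(\Fp\circ\lambda_n)(x,\xi)\rvert\,d\xi \le C(1+\|k\|_{\ell^2}^4)^{1/4}$ uniformly in $n$, with no need to keep $\lambda_n$ bounded away from zero (the paper instead works with a general Schwartz window, fixes $N$ with $\|\pi_N k\|\neq 0$, and obtains constants depending on $A=|k_N|$). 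The window-change remark via equivalence of $W^{1,\infty}$ norms is also legitimate.

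The genuine gap is in Step 2: the algebraic tensor product $X=L^1(\bR_x)\otimes L^\infty(\bR_\xi)$ is \emph{not} dense in the mixed-norm space $L^1_x(L^\infty_\xi)(\bR^2)$, and neither are the "simple" functions $\sum_i \chi_{A_i}(x)\psi_i(\xi)$ you invoke. The obstruction is the non-separability of $L^\infty$: the closure of $X$ in the mixed norm is the Bochner space $L^1(\bR;L^\infty(\bR))$ of \emph{strongly measurable} (essentially separably valued) vector functions, which is a proper closed subspace. Concretely, take $H(x,\xi)=\chi_{[0,1]}(x)\,\chi_{\{\xi\ge x\}}$. Its sections $H(x,\cdot)=\chi_{[x,\infty)}$ are pairwise at $L^\infty$-distance $1$, so for any finite-dimensional subspace $V=\mathrm{span}(\psi_1,\dots,\psi_m)$ the set $\{x\in[0,1]:\mathrm{dist}_{L^\infty}(H(x,\cdot),V)<\tfrac14\}$ is finite (a bounded subset of $V$ contains only finitely many $\tfrac12$-separated points), whence $\|H-G\|_{L^1_x(L^\infty_\xi)}\ge \tfrac14$ for every $G\in X$. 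Thus your argument only yields convergence of $T_n$ on $\overline{X}\subsetneq L^1_x(L^\infty_\xi)$, not on the whole space as the proposition asserts; and even for the paper's application one would additionally have to verify that the specific $H=V_g\widehat{h}$ lies in $\overline{X}$ (i.e.\ that $x\mapsto V_g\widehat h(x,\cdot)$ is strongly measurable), which you do not address.

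The paper circumvents exactly this issue by choosing a dense subspace defined through a \emph{weight} rather than a tensor structure: $X_q=\{H:\langle x\rangle^q H\in L^1_x(L^\infty_\xi)\}$. Density is then immediate by multiplying $H$ by cutoffs $\varphi(x/n)$ --- an operation that preserves the measurability structure of the sections, unlike tensor approximation --- and the convergence $T_n\to T$ on $X_q$ ($q\ge 1$) follows by dominated convergence, using an $n$-independent envelope $\Phi_{m,B}(x,\xi)$ (essentially $\sup_n$ of your Gaussian envelopes over the moving centers $\lambda_n^2x\in[-B^2|x|,B^2|x|]$) whose $\xi$-integral grows like $\langle x\rangle$; this linear growth is precisely why the weight $\langle x\rangle^q$ is needed. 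Your Step 3 (the Banach--Steinhaus splitting) is standard and would be fine once a genuinely dense subspace is in hand.
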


\section*{Acknowledgements}
The authors are members of Gruppo Nazionale per l’Analisi Matematica, la Probabilità e le loro Applicazioni (GNAMPA) --- Istituto Nazionale di Alta Matematica (INdAM). F.~N.~ is a Fellow of the Accademia delle Scienze di Torino and a member of the Societ\`a Italiana di Scienze e Tecnologie Quantistiche (SISTEQ).

The present research has been developed as part of the activities of the GNAMPA-INdAM 2022 project ``Analisi armonica e stocastica in problemi di quantizzazione e integrazione funzionale'', award number (CUP): E55F22000270001. We gratefully acknowledge financial support from GNAMPA-INdAM.

Part of this research has been carried out at CIRM-FBK (Centro Internazionale per la Ricerca Matematica --- Fondazione Bruno Kessler, Trento), in the context of the ``Research in Pairs 2023'' program. S.\ M.\ and S.\ I.\ T.\ are grateful to CIRM-FBK for the financial support and the excellent facilities. 

The authors report there are no competing interests to declare. No dataset has been analysed or generated in connection with this note. 

\appendix
\section{Proof of formula {\eqref{rep-FresnelVgpsi}}}\label{appendix-formulaFresenel}

Let $g \in \cS(\bR)$, $\varphi \in \cS(\bR^n)$, $v\in \bR^n$  and $f\in \cF\cM(\bR^n)$, i.e., of the form $f(x)=\int_{\bR^n}e^{iu\cdot y }d\mu(u)$ for some complex Borel measure $\mu$ on $\bR^n$. Then, for any $\veps >0$, $\hbar >0$,
\begin{equation}\label{Formula1AppendixA}
    (2\pi i \hbar)^{-n/2}\int_{\bR^n} e^{\frac{i}{2\hbar}|y|^2}e^{iv\cdot y}f(y)\varphi (\veps y)dy=\int _{\bR^n\times \bR^n}e^{-\frac{i\hbar}{2}|u+v+\veps w|^2}\hat\varphi (w)dwd\mu(u),
\end{equation}
where $\varphi(x)=\int _{\bR^n} e^{ix\cdot u}\hat \varphi (u)du$, $x \in \rn$. Indeed:
\begin{equation} (2\pi i \hbar)^{-n/2}\int_{\bR^n} e^{\frac{i}{2\hbar}|y|^2}e^{iv\cdot y}f(y)\varphi (\veps y)dy =(2\pi i \hbar)^{-n/2}\int_{\bR^n} e^{\frac{i}{2\hbar}|y|^2}e^{iv\cdot y}\varphi (\veps y)\left(\int_{\bR^n}e^{iu\cdot y }d\mu(u)\right)dy. 
\end{equation}
 By Fubini's theorem, the latter is equal to 
 \begin{align*}
     \int_{\bR^n}\left(\int_{\bR^n} \frac{e^{\frac{i}{2\hbar}|y|^2}}{(2\pi i \hbar)^{n/2}} e^{i(v+u)\cdot y}\varphi (\veps y)dy \right)  d\mu(u) &=\int_{\bR^n}\left(\int_{\bR^n} e^{-\frac{i\hbar}{2}|w+u+v|^2} \veps ^{-n}\hat\varphi (w/\veps )dw \right)  d\mu(u)\\
     &=\int_{\bR^n}\left(\int_{\bR^n} e^{-\frac{i\hbar}{2}|\veps w+u+v|^2} \hat\varphi (w )dw \right)  d\mu(u). 
 \end{align*}  
Let us now consider the integral
 \begin{equation}
     I_{\veps,x,\xi}\coloneqq \twpih^{-n/2}  \irn e^{\hbid |y|^2} \overline{V_g \psi_{-\alpha_n(y)} (x,\xi)} \vp(\veps y) dy, \quad x,\xi \in \bR,
 \end{equation}
 where 
  \[ \overline{V_g \psi_{-\alpha_n(y)} (x,\xi)} = \twph^{-1/2} e^{\hbi x(\xi+\alpha_n(y)) } \hat{g}(-\alpha_n(y)-\xi),\]
  with $\alpha_n(y) \coloneqq \bk_n\cdot y$, $\bk_n \coloneqq \pi_n k = (k_1,\ldots,k_n)$. The integral $I_{\veps,x,\xi}$ can be recast as
  \begin{equation*}
      I_{\veps,x,\xi}=e^{\hbi x\xi}(2\pi i \hbar)^{-n/2}\int_{\bR^n} e^{\frac{i}{2\hbar}|y|^2}e^{iv\cdot y}f(y)\varphi (\veps y)dy,
  \end{equation*}
  where $v=x \bk_n/\hbar$ and $f$ is the map given by 
  \begin{align*}
      f(y)&=\twph^{-1/2}\hat{g}(-\alpha_n(y)-\xi)\\
      &=\twph^{-1} \int_\bR e^{-\frac{i}{\hbar}u(-\alpha_n(y)-\xi)}g(u)du\\
      &=\twph^{-1}\int_\bR e^{\frac{i}{\hbar}u\bk_n\cdot y }e^{\frac{i}{\hbar}u\xi}g(u) du\\
       &=\twph^{-1}\int_\bR \int_{\bR^n}e^{iw\cdot y }  \delta_{u\bk_n/\hbar} (w)e^{\frac{i}{\hbar}u\xi} g(u) du.
  \end{align*} 
  In particular, since $f\in \cF\cM(\bR^n)$, we have the representation $f(y)=\int_{\bR^n}e^{iw\cdot y }d\mu(w)$ with $d\mu=\twph^{-1}\int_\bR e^{\frac{i}{\hbar}u\xi}{g(u)} \delta_{u\bk_n/\hbar}du$. Then, by \eqref{Formula1AppendixA} we have 
  \begin{align*}
       I_{\veps,x,\xi}=& e^{\hbi x\xi} \int _{\bR^n\times \bR^n}e^{-\frac{i\hbar}{2}|u +x \bk_n/\hbar+\veps w|^2}\hat\varphi (w)dwd\mu(u)\\
       =&e^{\hbi x\xi}\twph^{-1}\int _{\bR^n\times \bR^n\times \bR}e^{-\frac{i\hbar}{2}|u + x\bk_n/\hbar+\veps w|^2}\hat\varphi (w)e^{\frac{i}{\hbar}\lambda\xi} {g(\lambda)} dw\delta_{\lambda\bk_n/\hbar}(u) d\lambda\\
       =&e^{\hbi x\xi}\twph^{-1}\int _{\bR^n\times  \bR}e^{-\frac{i\hbar}{2}|\lambda\bk_n/\hbar + x\bk_n/\hbar+\veps w|^2}\hat\varphi (w)e^{\frac{i}{\hbar}\lambda\xi} {g(\lambda)} dw d\lambda\\
        =&e^{\hbi x\xi}\twph^{-1}\int _{\bR^n\times  \bR}e^{-\frac{i}{2\hbar}|\lambda\bk_n + x\bk_n+\veps\hbar w|^2}\hat\varphi (w)e^{\frac{i}{\hbar}\lambda\xi} {g(\lambda)} dw d\lambda.
  \end{align*} 

\section{Proof of Proposition \ref{prop-Tn}}\label{app-proof_Tn}

\textbf{Step 1.} \textit{Uniform boundedness of the family $\{T_n\}$. }

Assuming $k \ne 0$, let thus $N$ be the smallest integer such that $\| \pi_N k \| \ne 0$. It follows by direct computation that 
\begin{equation}
    V_g (f\circ \lambda) (x,\xi) = |\lambda|^{-1} V_{g \circ \lambda^{-1}} f(\lambda x, \lambda^{-1}\xi). 
\end{equation} 
In particular, arguing as in the proof of Lemma \ref{lem:fresnel_function}, if $0<A \le \lambda \le B$ we have for all $m \in \bN$: 
\begin{align}
    |V_g (\Fp \circ \lambda)(x,\xi)| & = \lambda^{-1} |V_{g \circ \lambda^{-1}} \Fp(\lambda x, \lambda^{-1}\xi)| \\ 
    & \lesssim_{m,A} \lan \lambda x + \lambda^{-1}\xi \ran^{-2m}.
\end{align}  Moreover, after noticing that $\lambda^2 x$ belongs to the interval $[-B^2|x|, B^2|x|]$, we have
\begin{align}
    \lan \lambda x + \lambda^{-1} \xi \ran^{-2m} & = (1+ \lambda^{-2}|\xi + \lambda^2 x|^2)^{-m} \\ 
    & \le (1+ B^{-2}|\xi + \lambda^2 x|^2)^{-m} \\ 
    & \le \Phi_{m,B}(x,\xi), 
\end{align} where we introduced the function
\begin{equation}
    \Phi_{m,B}(x,\xi) \coloneqq \begin{cases}
        1 & (|\xi| \le B^2 |x|) \\ (1+B^{-2} \min\{|\xi - B^2 x|^2,|\xi+B^2x|^2\})^{-m} & (|\xi|>B^2|x|).
    \end{cases}
\end{equation} For future reference we emphasize that a straightforward tail bound shows that, for $m>1$,
\begin{equation}
    \int_{\bR} \Phi_{m,B}(x,\xi) d\xi \lesssim \lan x \ran.  \label{eq-phi-bound}
\end{equation} To sum up, we have obtained 
\begin{equation}
    |V_g(\Fp \circ \lambda)(x,\xi)|\lesssim_{m,A,B} \Phi_{m,B}(x,\xi).
\end{equation}

This is precisely the case under our attention: setting $\lambda_n = \| \pi_n k\|$, we have $0<|k_N| \le \lambda_n \le \|k\|_{\ell^2}$ for all $n \ge N$, hence for all $m \in \bN$
\begin{align*}
     |V_g (\Fp \circ \| \pi_n k\|)(x,\xi)| & \lesssim_{m,A} \lan \lambda_n x - \lambda_n^{-1}\xi \ran^{-2m} \\ 
     & \lesssim (1+ B^{-2}|\xi + \lambda_n^2 x|^2)^{-m} \numberthis \label{eq-Vgfmpink-bound-m} \\ 
     & \lesssim_{m,A,B} \Phi_{m,B}(x,\xi), \numberthis \label{eq-Vgfmpink-bound-phi}
\end{align*} where we set $A=|k_N|$ and $B=\|k\|_{\ell^2}$ --- in particular, the implicit constants do not depend on $n$. 

As a result, for all $H \in L^1_x(L^\infty_\xi)(\bR^2)$ and $m>1$, by \eqref{eq-Vgfmpink-bound-m} we have 
\begin{align*} 
|T_n(H)| & \le  \int_{\bR^2} |H(x,\xi) \overline{V_g (\Fp \circ \|\pi_n k\|)(x,\xi)} | dx d\xi \\ 
& \lesssim \int_{\bR} \sup_{\xi \in \bR} |H(x,\xi)| \Big( \int_{\bR} (1+ B^{-2}|\xi + \lambda_n^2 x|^2)^{-m} d\xi \Big) dx  \\
& \le C \| H \|_{L^1_x(L^\infty_\xi)},
\end{align*} for a constant $C>0$ that does not depend on $n$ and $H$. We thus conclude that the family of operators $(T_n)$ is uniformly bounded. 

\noindent \textbf{Step 2.} \textit{Dense subspaces $X_q$ of $L^1_x(L^\infty_\xi)$.} 

Given $q > 0$, consider the space
\begin{equation}
    X_q \coloneqq \{ H \colon \bR^2 \to \bC : \lan x \ran^q H(x,\xi) \in L^1_x(L^\infty_\xi) (\bR^2)\}. 
\end{equation}
It is clear from the very definition that $X_q$ is a subspace of $L^1_x(L^\infty_\xi) (\bR^2)$, with continuous embedding. We now prove that $X_q$ is a dense subset in $L^1_x(L^\infty_\xi) (\bR^2)$. 

Let $\varphi$ be the function
\begin{equation}
    \varphi(t) \coloneqq \begin{cases}
        1 & (|t| \le 1) \\ e^{-|t - 1|^2} & (|t|>1),
    \end{cases} \quad t \in \bR,
\end{equation} and consider the associated sequence $(\varphi_n)_{n \in \bN}$ obtained by $\varphi_n(t) \coloneqq \varphi(t/n)$. 

Let $H \in L^1_x(L^\infty_\xi) (\bR^2)$ and set $H_n(x,\xi) \coloneqq H(x,\xi) \varphi_n(x)$. It is easy to realize that $H_n \in X_q$ for every $n \in \bN$ and $q>0$, since
\begin{equation}
    \int_{\bR} \lan x \ran^q \varphi(x/n) \big(\sup_{\xi \in \bR} H(x,\xi)\big) dx < \infty. 
\end{equation}
Moreover, by dominated convergence we have
\begin{equation}
    \lim_{n\to \infty} \| H-H_n \|_{L^1_x(L^\infty_\xi)} = \lim_{n\to \infty} \int_{\bR} \big(\sup_{\xi \in \bR} |H(x,\xi)|\big) (1-\varphi_n(x)) dx = 0,
\end{equation} hence proving the density of $X_q$ as claimed.

\noindent \textbf{Step 3.} \textit{Convergence $T_n \to T$ on $X_q$.} 

Given $q\ge 1$, we prove now that $T_n(H)\to T(H)$ for every $H \in X_q$. To this aim, by dominated convergence we infer 
\begin{align}
    \lim_{n\to \infty} |T_n(H)-T(H)| & = \lim_{n\to \infty} \int_{\bR^2} |V_g (\Fp \circ \|\pi_n k\|)(x,\xi)-V_g (\Fp \circ \| k \|_{\ell^2})(x,\xi)| |H(x,\xi)| dxd\xi \\
    & = \int_{\bR^2} \lim_{n\to \infty} |V_g (\Fp \circ \|\pi_n k\|)(x,\xi)-V_g (\Fp \circ \| k \|_{\ell^2})(x,\xi)| |H(x,\xi)| dxd\xi \\
    & = 0.
\end{align} Indeed, $\lim_{n\to \infty} V_g (\Fp \circ \|\pi_n k\|)(x,\xi) = V_g (\Fp \circ \| k \|_{\ell^2})(x,\xi)$ for all $(x,\xi) \in \bR^2$. Moreover, by effect of \eqref{eq-Vgfmpink-bound-phi} we have
\begin{align}
    & |V_g (\Fp \circ \|\pi_n k\|)(x,\xi)-V_g (\Fp \circ \| k \|_{\ell^2} )(x,\xi)| |H(x,\xi)| \\ \le & \Big(|V_g (\Fp \circ \|\pi_n k\|)(x,\xi)|+|V_g (\Fp \circ \| k \|_{\ell^2} )(x,\xi)|\Big) \Big( \sup_{\xi \in \bR} |H(x,\xi)|\Big) \\
    \lesssim & \Phi_{m,B}(x,\xi)\Big( \sup_{\xi \in \bR} |H(x,\xi)|\Big) \\
    \eqqcolon & G(x,\xi).
\end{align} By \eqref{eq-phi-bound} and given that $H\in X_q$, we have
\begin{align}
    \|G\|_{L^1} & \le \int_{\bR^2} \Phi_{m,B}(x,\xi)\Big( \sup_{\xi \in \bR} |H(x,\xi)|\Big) dx d\xi \\
    & \le \int_{\bR} \Big( \sup_{\xi \in \bR} |H(x,\xi)|\Big)  \Big(\int_{\bR} \Phi_{m,B}(x,\xi) d\xi\Big) dx \\
    & \lesssim \int_{\bR} \lan x \ran \Big( \sup_{\xi \in \bR} |H(x,\xi)|\Big)  dx < \infty. \qedhere
\end{align}


\begin{thebibliography}{99}

 
 \bibitem{AlBr}
Albeverio, Sergio; Brze\'niak, Zdzis\l{}aw. Finite-dimensional approximation approach to oscillatory integrals and stationary phase in infinite dimensions. \textit{J. Funct. Anal.} \textbf{113} (1993), no. 1, 177--244.
\bibitem{AlCaMa}
Albeverio, S.; Cangiotti, N.; Mazzucchi, S. A rigorous mathematical construction of Feynman path integrals for the Schr\"odinger equation with magnetic field. \textit{Comm. Math. Phys.} \textbf{377} (2020), no. 2, 1461--1503. 

\bibitem{AlHKMa}
Albeverio, Sergio A.; H\o{}egh-Krohn, Raphael J.; Mazzucchi, Sonia. \textit{Mathematical Theory of Feynman Path Integrals. An Introduction.} Second edition. Lecture Notes in Mathematics, 523. Springer-Verlag, Berlin, 2008. 

\bibitem{AlMa2005}
Albeverio, Sergio; Mazzucchi, Sonia. Generalized Fresnel integrals. \textit{Bull. Sci. Math.} \textbf{129} (2005), no. 1, 1--23. 

\bibitem{AlMa2}
Albeverio, Sergio; Mazzucchi, Sonia. Feynman path integrals for polynomially growing potentials. \textit{J. Funct. Anal.} \textbf{221} (2005), no. 1, 83--121. 

 \bibitem{AlMa2016} Albeverio, Sergio; Mazzucchi, Sonia. A unified approach to infinite-dimensional integration. \textit{Reviews in Mathematical Physics} \textbf{28} (2016), no. 2, 1650005.

 \bibitem{benyi_unimod} B\'enyi, \'Arp\'ad; Gr\"ochenig, Karlheinz; Okoudjou, Kasso A.; Rogers, Luke G. Unimodular Fourier multipliers for modulation spaces. \textit{J. Funct. Anal.} \textbf{246} (2007), no. 2, 366--384. 

 \bibitem{Bau} 
Bauer, Heinz. \textit{Probability Theory.} Translated from the fourth (1991) German edition by Robert B. Burckel and revised by the author. De Gruyter Studies in Mathematics, 23. Walter de Gruyter \& Co., Berlin, 1996. 

\bibitem{BBR} Boggiatto, Paolo; Buzano, Ernesto; Rodino, Luigi. \textit{Global Hypoellipticity and Spectral Theory.} Mathematical Research, 92. Akademie Verlag, Berlin, 1996. 
 
 \bibitem{Boc} Bochner, Salomon. \textit{Harmonic Analysis and the Theory of Probability.} Courier Corporation, 2005.

 \bibitem{Cam}
Cameron, Robert H. A family of integrals serving to connect the Wiener and Feynman integrals. \textit{J. Math. and Phys.} \textbf{39} (1960/61), 126--140.

\bibitem{Dui}
Duistermaat, J. J. Oscillatory integrals, Lagrange immersions and unfolding of singularities. \textit{Comm. Pure Appl. Math.} \textbf{27} (1974), 207--281. 

\bibitem{ELT}
Elworthy, David; Truman, Aubrey. Feynman maps, Cameron-Martin formulae and anharmonic oscillators. \textit{Ann. Inst. H. Poincaré Phys. Théor.} \textbf{41} (1984), no. 2, 115--142.

 \bibitem{evans} Evans, Lawrence C. \textit{Partial Differential Equations.} Second edition. Graduate Studies in Mathematics, 19. American Mathematical Society, Providence, RI, 2010. 

 \bibitem{fei_11} Feichtinger, Hans G.; Huang, Chunyan; Wang, Baoxiang. Trace operators for modulation, $\alpha$-modulation and Besov spaces. \textit{Appl. Comput. Harmon. Anal.} \textbf{30} (2011), no. 1, 110--127. 

 \bibitem{FNT} Feichtinger, Hans G.; Nicola, Fabio; Trapasso, S. Ivan. On exceptional times for pointwise convergence of integral kernels in Feynman-Trotter path integrals. In: \textit{Anomalies in partial differential equations}, 293--311, Springer INdAM Ser., 43, Springer, Cham, 2021. 

\bibitem{fei_mod83} Feichtinger, Hans G. Modulation spaces on locally compact abelian groups. In: \textit{Wavelets and Their Applications}, ed. by S. Thangavelu, M. Krishna, R. Radha (Allied Publishers, New
Dehli, 2003), pp. 99--140. Reprint of 1983 technical report, University of Vienna.

\bibitem{feyn1} Feynman, R. P. Space-time approach to non-relativistic quantum mechanics. \textit{Rev. Modern Physics} \textbf{20} (1948), 367--387. 

\bibitem{feyn2} Feynman, R. P. Space-time approach to quantum electrodynamics. \textit{Phys. Rev.} (2) \textbf{76} (1949), 769--789.

\bibitem{fuji_book} Fujiwara, Daisuke. \textit{Rigorous Time Slicing Approach to Feynman Path Integrals.} Mathematical Physics Studies. Springer, Tokyo, 2017. 

 \bibitem{gro_book} Gr\"ochenig, Karlheinz. \textit{Foundations of Time-Frequency Analysis.} Birkh\"auser Boston, Inc., Boston, MA, 2001. 

 \bibitem{gro_sjo} Gr\"ochenig, Karlheinz. Time-frequency analysis of Sj\"ostrand's class. \textit{Rev. Mat. Iberoam.} \textbf{22} (2006), no. 2, 703--724. 
 
 \bibitem{Hoc}
Hochberg, Kenneth J. A signed measure on path space related to Wiener measure. \textit{Ann. Probab.}\textbf{ 6} (1978), no. 3, 433--458. 

\bibitem{Hor71}
H\"ormander, Lars. Fourier integral operators. I. \textit{Acta Math.} \textbf{127} (1971), no. 1-2, 79--183. 

\bibitem{HorBook}
H\"ormander, Lars. \textit{The Analysis of Linear Partial Differential Operators. I. Distribution theory and Fourier analysis}. Reprint of the second (1990) edition. Classics in Mathematics. Springer-Verlag, Berlin, 2003.


 \bibitem{mazz_book} Mazzucchi, Sonia. \textit{Mathematical Feynman Path Integrals and their Applications.} Second edition. World Scientific Publishing Co. Pte. Ltd., Hackensack, NJ, 2022. 

\bibitem{NT_book} Nicola, Fabio; Trapasso, S. Ivan. \textit{Wave Packet Analysis of Feynman Path Integrals.} Lecture Notes in Mathematics, 2305. Springer, 2022.

\bibitem{NT_cmp} Nicola, Fabio; Trapasso, S. Ivan. On the pointwise convergence of the integral kernels in the Feynman-Trotter formula. \textit{Comm. Math. Phys.} \textbf{376} (2020), no. 3, 2277--2299. 

\bibitem{NT_jmp} Nicola, Fabio; Trapasso, S. Ivan. Approximation of Feynman path integrals with non-smooth potentials. \textit{J. Math. Phys.} \textbf{60} (2019), no. 10, 102103, 13 pp. 

\bibitem{reich-s} Reich, Maximilian; Sickel, Winfried. Multiplication and composition in weighted modulation spaces. In: \textit{Mathematical analysis, probability and applications—plenary lectures}, 103--149, Springer, 2016. 

\bibitem{sjo} Sj\"ostrand, Johannes. An algebra of pseudodifferential operators. \textit{Math. Res. Lett.} \textbf{1} (1994), no. 2, 185--192. 

\bibitem{stein} Stein, Elias M. \textit{Harmonic Analysis: Real-Variable Methods, Orthogonality, and Oscillatory Integrals.} With the assistance of Timothy S. Murphy. Princeton Mathematical Series, 43. Monographs in Harmonic Analysis, III. Princeton University Press, Princeton, NJ, 1993. 

\bibitem{voigt} Voigt, J\"urgen. Factorization in some Fr\'echet algebras of differentiable functions. \textit{Studia Math.} \textbf{77} (1984), no. 4, 333--348.

\bibitem{Tho}
Thomas, Erik. Projective limits of complex measures and martingale convergence. \textit{Probab. Theory Related Fields} \textbf{119} (2001), no. 4, 579--588. 

\bibitem{T_imrn} Trapasso, S.\ Ivan. On the convergence of a novel time-slicing approximation scheme for Feynman path integrals. \textit{Int. Math. Res. Not. IMRN} \textbf{2023}, no. 14, 11930--11961. 

\bibitem{Yam}
Yamasaki, Yasuo. \textit{Measures on Infinite-dimensional Spaces.} Series in Pure Mathematics, 5. World Scientific Publishing Co., Singapore, 1985. 


    \end{thebibliography}
\end{document}